\renewcommand{\mathcal}[1]{{\mathscr#1}}
\newcommand{\CRES}{2^*_s}
\newtheorem{theorem}{Theorem}[section]
\newtheorem{lemma}[theorem]{Lemma}
\newtheorem{prop}[theorem]{Proposition}
\theoremstyle{definition}
\newtheorem{defn}[theorem]{Definition}
\theoremstyle{remark}
\numberwithin{equation}{section}
\newcommand{\R}{{\mathbb R}}
\newcommand{\N}{{\mathbb N}}
\renewcommand{\leq}{\leqslant}
\renewcommand{\le}{\leqslant}
\renewcommand{\geq}{\geqslant}
\renewcommand{\ge}{\geqslant}
\newcommand{\eps}{\varepsilon }
\renewcommand{\epsilon}{\varepsilon }
\newlength{\defbaselineskip}
\newcommand{\setlinespacing}[1]
           {\setlength{\baselineskip}{#1 \defbaselineskip}}
\author[S. Dipierro]{Serena Dipierro}
\address[Serena Dipierro]{School of Mathematics and Statistics,
University of Melbourne, 813 Swanston St, Parkville VIC 3010, Australia,
School of Mathematics and Statistics,
University of Western Australia,
35 Stirling Highway,
Crawley, Perth
WA 6009, Australia,
and
Weierstra{\ss}-Institut f\"ur Angewandte
Analysis und Stochastik, Hausvogteiplatz 5/7, 10117 Berlin, Germany.}
\email{serena.dipierro@ed.ac.uk}
\author[M. Medina]{Mar\'{i}a Medina}
\address[Mar\'{i}a Medina]{Departamento de Matem\'aticas, Universidad Aut\'onoma de Madrid,
        28049, Madrid, Spain, and
Departamento de Ingenieria Matem\'atica, 
Facultad de Ciencias Fisicas y Matem\'aticas, 
Universidad de Chile,
Casilla 170, Correo 3, Santiago, Chile. }
\email{maria.medina@uam.es}
\author[I. Peral]{Ireneo Peral}
\address[Ireneo Peral]{Departamento de Matem\'aticas, Universidad Aut\'onoma de Madrid,
        28049, Madrid, Spain.}
\email{ireneo.peral@uam.es}
\author[E. Valdinoci]{Enrico Valdinoci}
\address[Enrico Valdinoci]{School of Mathematics and Statistics,
University of Melbourne, 813 Swanston St, Parkville VIC 3010, Australia,
School of Mathematics and Statistics,
University of Western Australia,
35 Stirling Highway,
Crawley, Perth
WA 6009, Australia,
Weierstra{\ss}-Institut f\"ur Angewandte
Analysis und Stochastik, Hausvogteiplatz 5/7, 10117 Berlin, Germany,
Dipartimento di Matematica, Universit\`a degli studi di Milano,
Via Saldini 50, 20133 Milan, Italy, and
Istituto di Matematica Applicata e Tecnologie Informatiche,
Consiglio Nazionale delle Ricerche,
Via Ferrata 1, 27100 Pavia, Italy.}
\email{enrico@math.utexas.edu}
\begin{document}

\subjclass[2010]{35B40, 35D30, 35J20, 35R11, 49N60}

\keywords{Bifurcation, Lyapunov-Schmidt reduction, critical problem, fractional elliptic regularity.}

\thanks{{\it Acknowledgements}.
The first author has been supported by Alexander von Humboldt Foundation and EPSRC grant  EP/K024566/1
\emph{Monotonicity formula methods for nonlinear PDEs}.
The second and third authors have been supported by projects  MTM2010-18128 and MTM2013-40846-P, MINECO, Spain.
The fourth author has been supported by ERC grant 277749 \emph{EPSILON Elliptic
Pde's and Symmetry of Interfaces and Layers for Odd Nonlinearities}. We would like to thank the Referee for her or his very
accurate and very valuable job.}

\title[Bifurcation results for a fractional equation]
{Bifurcation results for a fractional elliptic equation with critical
exponent in $\mathbb{R}^n$}

\begin{abstract}
In this paper we study some nonlinear elliptic equations in~$\R^n$
obtained as a perturbation of the problem with the fractional critical Sobolev
exponent, that is
\begin{equation*}
(-\Delta)^s u = \epsilon\,h\,u_+^q + u_+^p \ {\mbox{ in }}\R^n,
\end{equation*}
where
$s\in(0,1)$, $n>4s$, $\epsilon>0$ is a 
small parameter, $p=\frac{n+2s}{n-2s}$, $0<q<p$
and~$h$ is a continuous and compactly supported function.

To construct solutions to
this equation, we use the Lyapunov-Schmidt reduction,
that takes advantage of the variational structure of the problem.
For this, the case $0<q<1$ is particularly difficult, due to the lack
of regularity of the associated energy functional,
and we need to introduce a new functional setting
and develop an appropriate fractional elliptic regularity theory.
\end{abstract}

\maketitle

{\small
\tableofcontents
}

\section{Introduction}

In this paper we deal with the problem
\begin{equation}\label{problem}
(-\Delta)^s u = \epsilon\,h\,u_+^q + u_+^p \ {\mbox{ in }}\R^n,
\end{equation}
where $s\in(0,1)$ and $(-\Delta)^s$ is the fractional Laplacian, that is
$$ (-\Delta)^s u(x)= c_{n,s}\,PV \int_{\R^n}\frac{u(x)-u(y)}{|x-y|^{n+2s}}\,dy \
{\mbox{ for }}x\in\R^n,$$
where $c_{n,s}$ is a suitable positive constant.
Moreover\footnote{In this paper, we focus
our attention on the case $n>4s$, since, under this assumption,
the $L^2$-theory developed in Section \ref{sec:est} becomes available.
It would be interesting to investigate the remaining cases.} 
we assume that $n>4s$, $\epsilon>0$ is a small parameter, $p=\frac{n+2s}{n-2s}$ is
the fractional critical Sobolev exponent, $0<q<p$ and $h$ is
a continuous function that
satisfies
\begin{eqnarray}
&& \omega:=supp\,h {\mbox{ is compact}}\label{h2}\\
{\mbox{and }} && h_+\not\equiv 0. \label{h3}
\end{eqnarray}
The structural assumption \eqref{h2} is quite important in our paper,
since it allows us to set up a convenient functional framework,
in which we consider perturbations of the standard solution for $\eps=0$
which remain positive inside the support of $h$ (if $h$ is not compactly
supported, this is not possible, since the standard solution approaches
zero at infinity).

Also, assumption \eqref{h3} says, roughly speaking,
that at least in some part of the space there is a
reaction term
to balance the (fractional) diffusion given by the principal part
of the equation.

More precisely, in this paper
we will find solutions of problem \eqref{problem} by considering it as a perturbation
of the equation
\begin{equation}\label{entire}
(-\Delta)^s u = u^{p} \ {\mbox{ in }}\R^n,
\end{equation}
with $p=\frac{n+2s}{n-2s}$.
It is known that the optimizers of the Sobolev embedding in $\mathbb{R}^n$ are unique,
up to translations and positive dilations. Namely if we set
\begin{equation}\label{zetazero}
z_0(x):=\alpha_{n,s}\frac{1}{(1+|x|^2)^{(n-2s)/2}},
\end{equation}
then all the optimizers of the Sobolev embedding
are obtained by the formula
\begin{equation}\label{sol}
z_{\mu,\xi}(x):= \mu^{(2s-n)/2}\,z_0\left(\frac{x-\xi}{\mu}\right),
\end{equation}
where $\mu>0$, $\xi\in\R^n$. The normalizing constant  $\alpha_{n,s}$  depends only on $n$ and $s$ (see \cite{Lieb}, \cite{Talenti},  \cite{DDS} and the references therein),
and the explicit value of~$\alpha_{n,s}$
is not particularly relevant in our framework.
Notice also that equation \eqref{entire} is the Euler Lagrange
equation of this Sobolev embedding.\medskip

It has been showed in \cite{DDS} that solutions to \eqref{entire} of the form \eqref{sol}
are nondegenerate. Namely, setting $\partial_\mu z_{\mu,\xi}$ and $\partial_\xi z_{\mu,\xi}$
the derivative of $z_{\mu,\xi}$ with respect to the parameters $\mu$ and $\xi$ respectively,
then all bounded solutions of the linear equation
$$ (-\Delta)^s \psi = p\,z_{\mu,\xi}^{p-1}\,\psi \ {\mbox{ in }}\R^n $$
are linear combinations of $\partial_\mu z_{\mu,\xi}$ and $\partial_\xi z_{\mu,\xi}$. We also refer to~\cite{frank}, where
the nondegeneracy result was proved
in detail for~$s=1/2$ and~$n=3$
(but the proof can be extended in higher dimensions
and for fractional exponents~$s\in(0,n/2)$ as well).

We set
$$ [u]_{\dot H^s(\R^n)}^2:= \iint_{\R^{2n}}\frac{|u(x)-u(y)|^2}{|x-y|^{n+2s}}\,dx\,dy, $$
and we define the space $\dot{H}^s(\R^n)$ as the completion of
the space of smooth and rapidly decreasing functions (the so-called
Schwartz space)
with respect to the norm $[u]_{\dot H^s(\R^n)}+\|u\|_{L^{\CRES}(\R^n)}$,
where
$$ \CRES=\frac{2n}{n-2s}$$
is the fractional critical exponent.

We observe that the homogeneous space $\dot H^s(\R^n)$
coincides with the space of functions $u\in L^{\CRES}(\R^n)$ with finite
seminorm $[\cdot]_{\dot H^s(\R^n)}$ (and the norm in $\dot H^s(\R^n)$
is also equivalent to the seminorm, due to Sobolev embedding).

We also introduce the space
$$ X^s:=\dot{H}^s(\R^n)\cap L^\infty(\R^n),$$
equipped with the norm
$$ \|u\|_{X^s}:=[u]_{\dot H^s(\R^n)} + \|u\|_{L^\infty(\R^n)}.$$

Given $f\in L^\beta(\R^n)$, where $\beta:=\frac{2n}{n+2s}$,
we say that $u\in X^s$ is a (weak) solution to
$(-\Delta)^s u=f$ in $\R^n$ if
$$ \iint_{\R^{2n}}
\frac{\big( u(x)-u(y)\big)\,\big( \varphi(x)-\varphi(y)\big)}{|x-y|^{n+2s}}
\,dx\,dy=\int_{\R^n} f\,\varphi\,dx,$$
for any $\varphi\in X^s$.

We prove the following:
\begin{theorem}\label{TH1}
Let
$s\in(0,1)$, $n>4s$ and $p=\frac{n+2s}{n-2s}$.
Suppose that $h$ is a continuous
function that satisfies \eqref{h2} and \eqref{h3}.
Assume also that
\begin{eqnarray}\label{ALT:1}
&&{\mbox{either $\frac{2s}{n-2s}<q<p$,}}
\\&&{\mbox{or $0<q\le \frac{2s}{n-2s}$ and $h\ge0$.}}\label{ALT:2}
\end{eqnarray}
Then, there exist $\epsilon_0>0$, $\mu_1>0$ and $\xi_1\in\R^n$ such that
problem \eqref{problem} has a positive solution $u_{1,\epsilon}$ for any $\epsilon\in(0,\epsilon_0)$,
and $u_{1,\epsilon}\rightarrow z_{\mu_1,\xi_1}$ in $X^s$ as $\epsilon\rightarrow 0$.\end{theorem}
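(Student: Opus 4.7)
The plan is to perform a Lyapunov--Schmidt reduction, treating \eqref{problem} as a perturbation of \eqref{entire} along the $(n+1)$-parameter family $\{z_{\mu,\xi}\}_{\mu>0,\,\xi\in\R^n}$. I would look for $u_{1,\epsilon}$ of the form $u=z_{\mu,\xi}+\phi$, with $\phi$ orthogonal in $\dot{H}^s(\R^n)$ to the kernel $K_{\mu,\xi}:=\mathrm{span}\{\partial_\mu z_{\mu,\xi},\partial_{\xi_1} z_{\mu,\xi},\dots,\partial_{\xi_n} z_{\mu,\xi}\}$ of the linearized operator $L_{\mu,\xi}\psi:=(-\Delta)^s\psi - p\,z_{\mu,\xi}^{p-1}\psi$. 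Projecting \eqref{problem} onto $K_{\mu,\xi}^\perp$ and $K_{\mu,\xi}$ splits the task into an auxiliary infinite-dimensional equation for $\phi$ and an $(n+1)$-dimensional bifurcation system in $(\mu,\xi)$.

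For the orthogonal equation, the nondegeneracy recalled after \eqref{entire} guarantees that $L_{\mu,\xi}$ is invertible on $K_{\mu,\xi}^\perp$. A contraction-mapping argument in the hybrid space $X^s$, uniform for $(\mu,\xi)$ in compact sets, then produces $\phi_\epsilon(\mu,\xi)$ with quantitative smallness in $X^s$ as $\epsilon\to 0$. The main technical difficulty lies precisely here: when $0<q<1$ the map $t\mapsto t_+^q$ is only H\"older, so its Nemytskii operator fails to be $C^1$ between standard Hilbert--Sobolev spaces. This is exactly the reason for the joint norm $\dot{H}^s\cap L^\infty$; the contraction estimates must be coupled with a fractional elliptic regularity theory that, inside the iteration, transfers $\dot{H}^s$ bounds into $L^\infty$ bounds.

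Once $\phi_\epsilon(\mu,\xi)$ is available, solving \eqref{problem} is equivalent to finding critical points of the finite-dimensional functional $J_\epsilon(\mu,\xi):=I_\epsilon(z_{\mu,\xi}+\phi_\epsilon(\mu,\xi))$, where $I_\epsilon$ is the natural energy associated with \eqref{problem}. Using the smallness of $\phi_\epsilon$ together with the fact that $z_{\mu,\xi}$ solves \eqref{entire}, a standard computation gives the expansion
\begin{equation*}
J_\epsilon(\mu,\xi)=C_0-\frac{\epsilon}{q+1}\int_{\R^n}h(x)\,z_{\mu,\xi}^{q+1}(x)\,dx+o(\epsilon)
\end{equation*}
uniformly on compacta, with $C_0$ the Sobolev constant. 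It thus suffices to exhibit a $C^0$-stable critical point of $\Gamma(\mu,\xi):=\int_{\R^n}h\,z_{\mu,\xi}^{q+1}\,dx$ on $(0,\infty)\times\R^n$ and transfer it to $J_\epsilon$ by a standard topological argument.

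The dichotomy \eqref{ALT:1}--\eqref{ALT:2} is dictated by this last step and by the regularity required for the expansion above to hold with the stated error. Using the compact support \eqref{h2} and the explicit profile of $z_{\mu,\xi}$, a direct scaling analysis shows that $\Gamma$ is continuous on $(0,\infty)\times\R^n$ and vanishes as $\mu\to 0^+$, $\mu\to\infty$ or $|\xi|\to\infty$, while $\Gamma\not\equiv 0$ by \eqref{h3}. Under \eqref{ALT:1} the reduction goes through directly, and $\Gamma$ attains at least one strict interior extremum (a maximum if $\sup\Gamma>0$, a minimum if $\inf\Gamma<0$), yielding a stable critical point. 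Under \eqref{ALT:2}, the lower value of $q$ creates additional difficulties in controlling the expansion; these are absorbed by the sign condition $h\ge 0$, which forces $\Gamma\ge 0$ with $\Gamma\not\equiv 0$ and produces a strict interior maximum. In either case this critical point persists under the $o(\epsilon)$ perturbation, providing $(\mu_\epsilon,\xi_\epsilon)\to(\mu_1,\xi_1)$ and hence $u_{1,\epsilon}=z_{\mu_\epsilon,\xi_\epsilon}+\phi_\epsilon(\mu_\epsilon,\xi_\epsilon)\to z_{\mu_1,\xi_1}$ in $X^s$. Positivity is then obtained a posteriori from the strict positivity of $z_{\mu_1,\xi_1}$ and the $L^\infty$-smallness of $\phi_\epsilon$ inside $\mathrm{supp}\,h$, combined with a maximum principle for $(-\Delta)^s$ outside $\mathrm{supp}\,h$, where the equation collapses to $(-\Delta)^s u=u_+^p$.
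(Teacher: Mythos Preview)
Your proposal is correct in its broad strokes and matches the paper's Lyapunov--Schmidt strategy (the paper uses the Implicit Function Theorem rather than a contraction, but this is cosmetic). However, you misidentify the role of the dichotomy \eqref{ALT:1}--\eqref{ALT:2}. The $X^s$ framework is built precisely so that the auxiliary equation and the expansion $J_\epsilon = C_0 - \epsilon\,\Gamma + o(\epsilon)$ go through uniformly for all $q\in(0,p)$; nothing additional is needed there when $q\le \frac{2s}{n-2s}$, and the sign of $h$ plays no part in that step. The threshold $q=\frac{2s}{n-2s}$ enters only in the analysis of $\Gamma$ itself: it is exactly the borderline for $z_0^{q+1}\in L^1(\R^n)$. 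When $q>\frac{2s}{n-2s}$, Dominated Convergence gives $\Gamma(\mu,\xi_0)/\mu^{n-\gamma_s}\to c\,h(\xi_0)$ as $\mu\to 0$ (Lemma~\ref{lemma3}), which---picking $\xi_0$ with $h(\xi_0)>0$---shows $\Gamma>0$ somewhere. When $q\le \frac{2s}{n-2s}$ this integral diverges and DCT is unavailable; the paper replaces it by Fatou's Lemma (Lemma~\ref{lemma3BIS}), which needs $h\ge 0$. Your shortcut for \eqref{ALT:2}---observing that $h\ge 0$ and $h_+\not\equiv 0$ force $\Gamma>0$ everywhere, hence an interior maximum once $\Gamma\to 0$ at the boundary of $(0,\infty)\times\R^n$---is valid and in fact slightly cleaner than the paper's route. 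But for \eqref{ALT:1} your claim ``$\Gamma\not\equiv 0$ by \eqref{h3}'' is not immediate when $h$ changes sign and genuinely requires the concentration computation above.
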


\begin{theorem}\label{TH1BIS}
Let
$s\in(0,1)$, $n>4s$, $p=\frac{n+2s}{n-2s}$ and $\frac{2s}{n-2s}<q<p$.
Suppose that $h$ is a continuous
function that satisfies \eqref{h2} and \eqref{h3}, and that
changes sign.

Then for any $\epsilon\in(0,\epsilon_0)$ there exists
a second positive solution $u_{2,\epsilon}$ to \eqref{problem} that, as $\epsilon\rightarrow 0$,
converges in $X^s$ to $z_{\mu_2,\xi_2}$ with $\mu_2>0$, $\mu_2\neq\mu_1$,
and $\xi_2\in\R^n$, $\xi_2\neq\xi_1$.
\end{theorem}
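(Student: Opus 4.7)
The plan is to build on the Lyapunov--Schmidt reduction developed for Theorem~\ref{TH1} and to extract a \emph{second} critical point of the reduced functional by exploiting the sign change of $h$. Concretely, I would reuse the auxiliary scheme: for each $(\mu,\xi)\in(0,\infty)\times\R^n$ one writes $u=z_{\mu,\xi}+\phi$ with $\phi$ orthogonal (in the $\dot H^s$ sense) to the tangent space spanned by $\partial_\mu z_{\mu,\xi}$ and $\partial_{\xi_i}z_{\mu,\xi}$. The nondegeneracy recalled in the introduction makes the linearized operator invertible on this orthogonal complement, so the contraction argument already set up for Theorem~\ref{TH1} produces a unique small $\phi=\phi_{\mu,\xi,\eps}\in X^s$, with $\|\phi_{\mu,\xi,\eps}\|_{X^s}=o_\eps(1)$ uniformly on compact subsets of the parameter space. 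Genuine solutions of~\eqref{problem} then correspond to critical points of the reduced functional $\mathcal{F}_\eps(\mu,\xi):=I_\eps(z_{\mu,\xi}+\phi_{\mu,\xi,\eps})$, where $I_\eps$ is the energy associated with~\eqref{problem}.

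Next, I would invoke the asymptotic expansion already at the core of Theorem~\ref{TH1}, namely
\[
\mathcal{F}_\eps(\mu,\xi)=\mathcal{S}-\frac{\eps}{q+1}\,\Gamma(\mu,\xi)+o(\eps),\qquad \Gamma(\mu,\xi):=\int_{\R^n}h(x)\,z_{\mu,\xi}(x)^{q+1}\,dx,
\]
where $\mathcal{S}$ is the unperturbed Sobolev critical value and the error $o(\eps)$ is uniform on compact subsets of $(0,\infty)\times\R^n$. The hypothesis $q>\tfrac{2s}{n-2s}$ is exactly what is needed to guarantee $z_0^{q+1}\in L^1(\R^n)$, hence the continuity of $\Gamma$; a direct rescaling, together with the compact support of $h$, then shows that $\Gamma(\mu,\xi)\to 0$ as $\mu\to 0^+$, $\mu\to+\infty$, or $|\xi|\to+\infty$.

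Now I would exploit the sign change of $h$. Since both $h_+\not\equiv 0$ and $h_-\not\equiv 0$, picking $\xi$ close to a point where $h(\xi)>0$ (respectively $h(\xi)<0$) with a suitable $\mu$ produces configurations where $\Gamma>0$ (respectively $\Gamma<0$). Combined with the decay of $\Gamma$ at the boundary of the parameter space, this forces $\Gamma$ to attain both a strict positive global maximum at some $(\mu_1,\xi_1)$ --- precisely the point captured by Theorem~\ref{TH1} --- and a strict negative global minimum at some $(\mu_2,\xi_2)$, automatically distinct from the first since $\Gamma$ takes values of opposite sign there. Both extrema are topologically stable: enclosing $(\mu_2,\xi_2)$ in a small closed neighborhood $\mathcal{N}\subset(0,\infty)\times\R^n$ on whose boundary $\Gamma$ stays strictly above $\Gamma(\mu_2,\xi_2)$, the expansion above implies that for $\eps$ small enough $\mathcal{F}_\eps|_{\mathcal{N}}$ attains its infimum in the interior of $\mathcal{N}$ and hence produces a critical point $(\mu_2^\eps,\xi_2^\eps)\to(\mu_2,\xi_2)$. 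The associated function $u_{2,\eps}:=z_{\mu_2^\eps,\xi_2^\eps}+\phi_{\mu_2^\eps,\xi_2^\eps,\eps}$ is positive since $z_{\mu_2,\xi_2}>0$ everywhere and $\phi_{\mu_2^\eps,\xi_2^\eps,\eps}\to 0$ in $L^\infty$, it is distinct from $u_{1,\eps}$ by construction, and the asserted limits $\mu_2\neq\mu_1$, $\xi_2\neq\xi_1$ follow.

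The hard part will be verifying that the $o(\eps)$ remainder in the expansion of $\mathcal{F}_\eps$ is genuinely uniform in a full neighborhood of $(\mu_2,\xi_2)$ --- a region of parameter space different from the one used for Theorem~\ref{TH1} --- while keeping the delicate regularity estimates on $\phi_{\mu,\xi,\eps}$ (which, for $q<1$, rely on the ad hoc functional setting and fractional regularity theory of this paper) uniform in $(\mu,\xi)$ throughout $\mathcal{N}$. Once those uniform estimates are in place, the finite-dimensional deformation argument at the negative minimum of $\Gamma$ follows a standard pattern.
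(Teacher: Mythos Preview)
Your proposal is correct and follows essentially the same route as the paper: reuse the Lyapunov--Schmidt reduction, observe that $\Gamma$ vanishes at the boundary of parameter space (Lemmata~\ref{lemma1}--\ref{lemma2}) and, via Lemma~\ref{lemma3} applied at a point where $h<0$, takes strictly negative values, hence attains a proper interior minimum distinct from the maximum found in Theorem~\ref{TH1}; Proposition~\ref{prop1} then yields the second solution. One small slip: since $\mathcal{F}_\eps\approx\mathcal{S}-\eps\,\Gamma$, a strict \emph{minimum} of $\Gamma$ produces a strict \emph{maximum} (not infimum) of $\mathcal{F}_\eps$ on $\mathcal{N}$, but this does not affect the argument.
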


In order to prove Theorems \ref{TH1} and \ref{TH1BIS}
we will use a Lyapunov-Schmidt reduction,
that takes advantage of the variational structure
of the problem.
Indeed, positive
solutions to \eqref{problem} can be found as critical points of the
functional $f_\epsilon: X^s\rightarrow\R$ defined by
\begin{eqnarray}\label{feps}
f_\epsilon(u) &:=&\frac{c_{n,s}}{4}\iint_{\R^{2n}}\frac{|u(x)-u(y)|^2}{|x-y|^{n+2s}}\,dx\,dy \\
&& \quad -\frac{\epsilon}{q+1}\,\int_{\R^n}h(x)\,u_+^{q+1}(x)\,dx
-\frac{1}{p+1}\,\int_{\R^n}u_+^{p+1}(x)\,dx.
\nonumber\end{eqnarray}
We notice that $f_\epsilon$ can be written as
\begin{equation}\label{perturbed}
f_\epsilon(u)= f_0(u)-\epsilon\, G(u),
\end{equation}
where
\begin{equation}\label{fzero}
f_0(u):= \frac{c_{n,s}}{4}\iint_{\R^{2n}}\frac{|u(x)-u(y)|^2}{|x-y|^{n+2s}}\,dx\,dy
-\frac{1}{p+1}\,\int_{\R^n}u_+^{p+1}(x)\,dx
\end{equation}
and
\begin{equation}\label{Gu}
G(u):= \frac{1}{q+1}\,\int_{\R^n}h(x)\,u_+^{q+1}(x)\,dx.
\end{equation}
Indeed, we will use a perturbation method that allows us to find critical points of $f_\epsilon$
by bifurcating from a manifold of critical points of the
unperturbed functional $f_0$
(see for instance \cite{AM} for the abstract method).

Notice that critical points of $f_0$ are solutions to \eqref{entire},
and so, in order to construct solutions to \eqref{problem},
we will start from functions of the form \eqref{sol}
and we will add a small error to them
in such a way that we obtain solutions to the perturbed problem.

This small error will be found by means of the Implicit Function Theorem.
To do this,
a crucial ingredient will be the nondegeneracy condition
proved in \cite{DDS} for $z_{\mu,\xi}$, but the application
of the linear theory in our case is non-standard
and it requires a pointwise control of the functional spaces.

Roughly speaking, one additional difficulty for us
is indeed that when~$q<1$ the energy functional is not
$C^2$ at the zero level set, and so
the classical Implicit Function Theorem cannot
be applied, unless we can avoid the singularity. For this,
the classical Hilbert space framework is not enough, and we have
to keep track of the pointwise behavior of the functions
inside our functional analysis framework. This is for instance the main reason
for which we work in
the more robust space~$X^s$ rather than in
the more classical space~$\dot H^s(\R^n)$.

Of course, the change of functional setting
produces some difficulties in the invertibility of
the operators, since the Hilbert-Fredholm theory does not directly apply,
and we will have to compensate it by an appropriate elliptic
regularity theory.

Once these difficulties are overcome,
the Lyapunov-Schmidt reduction allows us to reduce our problem
to the one of finding critical points of the perturbation $G$,
introduced in \eqref{Gu}. For this, we set
\begin{equation}\label{gamma intr}
\Gamma(\mu,\xi):= G(z_{\mu,\xi}),
\end{equation}
where $z_{\mu,\xi}$ has been introduced in \eqref{sol}.
The study of the behavior of $\Gamma$ will give us
the existence of critical points of $G$,
and so the existence of solution to \eqref{problem}.

We also mention \cite{two-bubble, erratum}, where the authors 
use a different reduction procedure to deal with a slightly supercritical problem 
in a bounded domain. 

\medskip

There is a huge literature concerning the search of solutions for
this kind of perturbative problems
in the classical case, i.e. when $s=1$ and the fractional Laplacian
boils down to the classical Laplacian, see
\cite{AGP, AGP2, ALM, AmbMal, AmbMal2, BM, CaW, Ci, Da, Mal, Mal2}.
In particular, Theorems \ref{TH1} and \ref{TH1BIS}
here can be seen as the
nonlocal counterpart\footnote{We take this opportunity to
point out that there is a flaw on the last formula of
page 28 of
\cite{AGP2}. Indeed, one cannot use Fatou Lemma there, since $h$
is not positive. The additional assumption
$\frac{2}{n-2}<q<p$ is needed in order to use the Dominated Convergence Theorem.}
of Theorem 1.3 in \cite{AGP2}.
See also \cite{GMP}, where the concave term appears for the first time.

In the fractional case, the situation is more involved.
Namely, 
the nonlocal Schr\"odinger equation
has recently received a growing attention not only
for the challenging mathematical difficulties that it offers,
but also due to some important physical applications
(see e.g.~\cite{LAS},
the appendix in~\cite{DDDV}, and the references therein).
In the subcritical case,
this nonlocal Schr\"odinger equation 
can be written as
$$ \epsilon^{2s}(-\Delta)^su+V(x)u=u^p \ {\mbox{ in }}\R^n,$$
with $1<p<\frac{n+2s}{n-2s}$ and~$V$ a smooth potential.
Multi-peak solutions for this type of equations
were considered recently in \cite{DDW}.
Also in this case, a key ingredient in the proof
is the uniqueness and nondegeneracy of the ground state
solution of the corresponding unperturbed problem,
which has been proved in~\cite{FLS}
for any~$s\in(0,1)$ and in any dimension,
after previous works in dimension~1 (see~\cite{FL}) and
for~$s$ close to~1 (see~\cite{FV}).

Moreover, given a bounded domain $\Omega\subset\R^n$, the Dirichlet problem
$$ \begin{cases}
\epsilon^{2s}(-\Delta)^su+u=u^p &\ {\mbox{ in }}\Omega,\\
u=0 &\ {\mbox{ in }}\R^n\setminus\Omega,
\end{cases} $$
was considered in \cite{DDDV},
where the authors constructed solutions that concentrate at the interior of the domain.\medskip

Concentrating solutions for fractional problems involving
critical or almost critical exponents were considered in \cite{CKL}.
See also \cite{CZ} for some concentration phenomena in particular cases
and~\cite{SEC} for the study of the soliton dynamics in related problems.
See also~\cite{CZ2} for a semilinear problem with critical power,
related to the scalar curvature problem,
that also exploits a Lyapunov-Schmidt reduction.
It is worth pointing out that, in our case, the presence of the subcritical,
possibly sublinear, power
in our problem introduces extra difficulties that have required
the development of certain elliptic regularity theory,
and the careful analysis of the corresponding functional framework.
Notice indeed that for sublinear powers~$q$ the energy functional
experiences a loss of regularity, so the standard functional analysis
methods are not directly available and several technical modifications
are needed.

In particular, we perform here a detailed analysis of the linearized equation,
that is the key ingredient to use the Lyapunov-Schmidt arguments.
We think that these results are of independent interest
and can be useful elsewhere.

\medskip

As a matter of fact, we point out that the nonlocal framework
considered here provides additional difficulties, in terms of the regularity theory
and for the perturbative arguments (for instance, the Lyapunov-Schmidt theory
and the invertibility of the linearized operators become more involved
in this setting, due to the nonlocal effects in the remainders).

We also notice that the positivity (or more generally,
the existence of a nontrivial positive component) of $h$,
as ensured by \eqref{h3} will allow us a qualitative analysis
on a reduced functional in Section \ref{sec:proof}.
\medskip

The paper is organized as follows. In Section \ref{sec:est}
we show some auxiliary fractional elliptic estimates needed in the
subsequent sections. In Section \ref{qless1} we perform the Lyapunov-Schmidt reduction,
with the detailed study of the linearized equation, and the associated functional analysis theory.
Section \ref{sec:gamma} is devoted to the study of the behavior of $\Gamma$,
as defined in \eqref{gamma intr}. Finally, in Section \ref{sec:proof} we
complete the proof of Theorems \ref{TH1} and \ref{TH1BIS}.

\section{Fractional elliptic estimates}\label{sec:est}

Here we obtain some uniform elliptic estimates
on Riesz potential (though the topic is of classical
flavor in harmonic analysis, we could not find in the literature
a statement convenient for our purposes). These estimates will be used in Section~\ref{qless1}
in order to obtain the continuity properties of our functionals.

We recall that
$$ H^s(\R^n)=\{u:\R^n\to\R {\mbox{ measurable }}
{\mbox{ s.t. }}\|u\|_{L^2(\R^n)}+[u]_{\dot H^s(\R^n)}<+\infty \}.$$

To start with, we recall the fractional Sobolev
inequality (see e.g. Theorem~6.5 in~\cite{DPV}):

\begin{lemma}\label{L SOB}
Let~$n>2s$.
Let~$f:\R^n\to\R$ be a measurable function. Suppose that there exists
a sequence of functions~$f_k\in H^s(\R^n)$
such that~$f_k\to f$ in~$\dot H^s(\R^n)$ and
a.e. in~$\R^n$. Then
\begin{equation}\label{SOB}
\|f\|_{L^{\CRES}(\R^n)} \le C\, [f]_{\dot H^s(\R^n)},
\end{equation}
for some~$C>0$ depending on~$n$ and~$s$.
In particular, the inequality in~\eqref{SOB}
holds true for any~$f\in X^s$.
\end{lemma}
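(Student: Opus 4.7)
The plan is to derive the inequality for the limit $f$ from the classical fractional Sobolev inequality applied to each $f_k \in H^s(\R^n)$, and then to approximate an arbitrary $f \in X^s$ by a sequence satisfying the hypotheses.

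First I would apply the standard Sobolev inequality (Theorem 6.5 in \cite{DPV}) to each $f_k \in H^s(\R^n)$, which yields
$$\|f_k\|_{L^{\CRES}(\R^n)} \le C\,[f_k]_{\dot H^s(\R^n)}.$$
Since $f_k \to f$ in $\dot H^s(\R^n)$, the seminorms $[f_k]_{\dot H^s(\R^n)}$ converge to $[f]_{\dot H^s(\R^n)}$. Moreover, applying Sobolev to the differences $f_k - f_j \in H^s(\R^n)$ (whose seminorm tends to zero by the Cauchy property of $\{f_k\}$ in $\dot H^s$) shows that $\{f_k\}$ is Cauchy in $L^{\CRES}(\R^n)$; hence $f_k \to g$ in $L^{\CRES}(\R^n)$ for some $g \in L^{\CRES}(\R^n)$.

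Next I would identify $g$ with $f$. From $L^{\CRES}$-convergence one may extract a subsequence $f_{k_j}$ that converges to $g$ almost everywhere; combined with the assumed a.e.\ convergence $f_k \to f$, this forces $f = g$ a.e., so $f \in L^{\CRES}(\R^n)$ and $\|f_k\|_{L^{\CRES}(\R^n)} \to \|f\|_{L^{\CRES}(\R^n)}$. Passing to the limit in the Sobolev inequality for $f_k$ then gives the claimed inequality~\eqref{SOB}. I expect the only mildly delicate point to be the identification of $g$ with $f$, which would break down if one had only weak limits; this is precisely why the hypothesis provides both $\dot H^s$-convergence and pointwise a.e.\ convergence.

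For the final assertion that every $f \in X^s$ admits such a sequence, I would use the definition of $\dot H^s(\R^n)$ as the completion of the Schwartz space under the norm $[\cdot]_{\dot H^s(\R^n)} + \|\cdot\|_{L^{\CRES}(\R^n)}$. By this definition, for $f \in X^s \subset \dot H^s(\R^n)$ there is a sequence of Schwartz functions $f_k$ with $[f_k - f]_{\dot H^s(\R^n)} \to 0$ and $\|f_k - f\|_{L^{\CRES}(\R^n)} \to 0$; since Schwartz functions lie in $H^s(\R^n)$, and since $L^{\CRES}$-convergence yields a.e.\ convergence along a subsequence, this subsequence satisfies the hypotheses of the lemma, and \eqref{SOB} follows.
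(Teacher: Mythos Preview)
Your argument is correct. The paper does not give its own proof of this lemma; it simply records the statement and refers to Theorem~6.5 in~\cite{DPV}. Your write-up supplies exactly the details behind that citation: apply the classical inequality to each~$f_k\in H^s(\R^n)$, pass to the limit, and then check that any~$f\in X^s$ can be approximated as required via the very definition of~$\dot H^s(\R^n)$.

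One minor simplification you might consider: once you have~$\|f_k\|_{L^{\CRES}(\R^n)}\le C\,[f_k]_{\dot H^s(\R^n)}$ and~$f_k\to f$ a.e., Fatou's Lemma gives directly
\[
\|f\|_{L^{\CRES}(\R^n)}\le \liminf_{k\to\infty}\|f_k\|_{L^{\CRES}(\R^n)}
\le C\,\liminf_{k\to\infty}[f_k]_{\dot H^s(\R^n)}=C\,[f]_{\dot H^s(\R^n)},
\]
which bypasses the Cauchy-in-$L^{\CRES}$ step and the identification of the limit~$g$. Your route is fine, just slightly longer.
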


Here is the fractional elliptic regularity needed for our goals:

\begin{theorem}\label{THABC}
Let~$n>4s$.
Let~$\beta:=2n/(n+2s)$ and~$\psi\in L^\beta(\R^n)$.
Let also
\begin{equation}\label{Jpsi} J\psi(x):=\int_{\R^n} \frac{\psi(y)}{|x-y|^{n-2s}}\,dy.\end{equation}
Then:
\begin{eqnarray}
\label{EL-1} && {\mbox{$J\psi\in L^{\CRES}(\R^n)$, and
$\| J\psi\|_{L^{\CRES}(\R^n)} \le C\,\|\psi\|_{L^\beta(\R^n)}$;}} \\
\label{EL-2} && {\mbox{$J\psi\in \dot{H}^s(\R^n)$, and
$[J\psi]_{\dot{H}^s(\R^n)} \le C\,\|\psi\|_{L^\beta(\R^n)}$;}} \\
\label{EL-4} && {\mbox{$(-\Delta)^s (J\psi) =c\psi$ in the weak
sense, i.e.}}\\ &&
\qquad\iint_{\R^{2n}}
\frac{\big( (J\psi)(x)-(J\psi)(y)\big)\,\big( \phi(x)-\phi(y)\big)}{|x-y|^{n+2s}}
\,dx\,dy
=c \int_{\R^n} \psi(x)\, \phi(x)\,dx \nonumber
\\ &&\qquad{\mbox{ for any $\phi\in X^s$;}}\nonumber\\
\label{EL-3} && {\mbox{if, in addition, it holds that~$\psi\in
L^{\frac{n}{2s}+\delta_o}(B_1)$,}}\\ \nonumber&& {\mbox{for some $\delta_o>0$, then
$J\psi\in L^{\infty}(\R^n)$,}}\\
\nonumber &&\qquad{\mbox{and
$\| J\psi\|_{L^\infty(\R^n)} \le C_{\delta_o}\,\Big(\|\psi\|_{
L^{\frac{n}{2s}+\delta_o}(B_1)}+
\|\psi\|_{L^\beta(\R^n)}\Big)$.}}\\
\nonumber &&{\mbox{In particular,
if~$\psi\in
L^\infty(\R^n)$, then
$J\psi\in L^{\infty}(\R^n)$,}}\\
\nonumber && \qquad{\mbox{and
$\| J\psi\|_{L^\infty(\R^n)} \le C\,\Big(\|\psi\|_{L^\infty(\R^n)}+
\|\psi\|_{L^\beta(\R^n)}\Big)$.}}
\end{eqnarray}
Here above, $C$ and $c$ are suitable positive
constants\footnote{In \label{P5} the sequel, for simplicity we will just take~$c=1$
in~\eqref{EL-4}. This can be accomplished simply by renaming~$J$ to~$c^{-1}J$.} only depending
on~$n$ and~$s$, while $C_{\delta_o}$ also depends on $\delta_o$.
\end{theorem}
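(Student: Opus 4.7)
The plan is to view $J$ as a constant multiple of $(-\Delta)^{-s}$, prove the claims first for Schwartz data where the Fourier machinery applies directly, and extend to general $\psi\in L^\beta(\R^n)$ by density. Claim \eqref{EL-1} is the Hardy--Littlewood--Sobolev inequality for the Riesz potential of order $2s$: the scaling relation $\tfrac{1}{\beta}-\tfrac{1}{\CRES}=\tfrac{2s}{n}$ is exactly the one required, so the classical statement applies directly.

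For \eqref{EL-4}, I would take $\psi$ Schwartz first. The distributional Fourier identity for the Riesz kernel gives $\widehat{J\psi}(\xi)=c|\xi|^{-2s}\hat\psi(\xi)$, hence $(-\Delta)^s(J\psi)=c\psi$ pointwise. Combined with the Plancherel representation of the Gagliardo seminorm,
$$\iint_{\R^{2n}}\frac{(u(x)-u(y))(\phi(x)-\phi(y))}{|x-y|^{n+2s}}\,dx\,dy = c'\int_{\R^n}|\xi|^{2s}\hat u(\xi)\,\overline{\hat\phi(\xi)}\,d\xi,$$
applied to $u=J\psi$, this yields the weak identity against any $\phi\in X^s$. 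For general $\psi\in L^\beta$ I would approximate by Schwartz $\psi_k\to\psi$ in $L^\beta$ and pass to the limit, using \eqref{EL-1} (so that $J\psi_k\to J\psi$ in $L^{\CRES}$, along a subsequence a.e.) and H\"older. Once \eqref{EL-4} is in hand, estimate \eqref{EL-2} comes from testing against $\phi = J\psi$ at the approximant level, yielding the chain
$$[J\psi_k]_{\dot H^s(\R^n)}^2 = c\int_{\R^n}\psi_k\,J\psi_k\,dx \leq c\,\|\psi_k\|_{L^\beta(\R^n)}\,\|J\psi_k\|_{L^{\CRES}(\R^n)} \leq C\,\|\psi_k\|_{L^\beta(\R^n)}^2,$$
by H\"older and \eqref{EL-1}. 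Applied to differences $\psi_k-\psi_j$, the same chain shows $\{J\psi_k\}$ is Cauchy in the norm $[\cdot]_{\dot H^s}+\|\cdot\|_{L^{\CRES}}$ defining $\dot H^s(\R^n)$, placing $J\psi$ in that completion with the advertised bound.

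For the $L^\infty$ estimate \eqref{EL-3} I would use the standard near/far decomposition
$$J\psi(x) = \int_{|x-y|<1}\frac{\psi(y)}{|x-y|^{n-2s}}\,dy + \int_{|x-y|\geq 1}\frac{\psi(y)}{|x-y|^{n-2s}}\,dy.$$
On the far piece, H\"older with the conjugate pair $(\beta,\beta')$ applies because $(n-2s)\beta'=2n>n$ makes the kernel integrable outside the unit ball, yielding a bound by $\|\psi\|_{L^\beta(\R^n)}$ uniform in $x$. On the near piece, H\"older with the conjugate pair $(n/(2s)+\delta_o,\,p_o')$ applies because the strict inequality $n/(2s)+\delta_o>n/(2s)$ forces $(n-2s)p_o'<n$, making the kernel integrable on the unit ball, and produces a bound by the local $L^{n/(2s)+\delta_o}$ norm of $\psi$. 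The $L^\infty$ corollary then follows immediately from $\|\psi\|_{L^{n/(2s)+\delta_o}(B_1)}\leq C\|\psi\|_{L^\infty(\R^n)}$.

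The step I expect to be the main obstacle is the identification in \eqref{EL-2} of the abstract $\dot H^s$-limit of $J\psi_k$ with the concrete Riesz integral $J\psi$ from \eqref{Jpsi}. One has to pair the Cauchy property in seminorm with Lemma \ref{L SOB} (which controls $L^{\CRES}$ by the seminorm) to conclude that the limit agrees with $J\psi$ both in $L^{\CRES}$ and almost everywhere, and thus lies in $\dot H^s(\R^n)$ in the paper's completion sense. The near-part computation in \eqref{EL-3} is otherwise routine, but requires tracking the $\delta_o$-dependent exponent carefully to recover the constant $C_{\delta_o}$ stated in the theorem.
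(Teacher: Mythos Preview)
Your outline is correct and follows essentially the same route as the paper: Hardy--Littlewood--Sobolev for \eqref{EL-1}, Schwartz approximation plus the Fourier identity $\widehat{J\psi}=c|\xi|^{-2s}\hat\psi$ for \eqref{EL-2} and \eqref{EL-4}, and the near/far split with H\"older for \eqref{EL-3}. Two points, however, are glossed over.

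First, you never say where the hypothesis $n>4s$ enters, and your phrase ``the Fourier machinery applies directly'' hides the issue. For Schwartz $\psi$, the potential $J\psi$ decays only like $|x|^{-(n-2s)}$, so $J\psi\in L^2(\R^n)$ precisely when $n>4s$; without this you cannot invoke the Plancherel representation of the Gagliardo form for $u=J\psi$, since the standard equivalence (Proposition~3.4 in \cite{DPV}) is proved for $H^s$ functions. The paper singles this out explicitly, first showing $J\psi_j\in L^2(\R^n)$ (indeed $H^1$) for Schwartz $\psi_j$ before any Fourier computation. Second, your ordering has a small circularity: you propose to pass to the limit in \eqref{EL-4} for general $\psi$ ``using \eqref{EL-1} and H\"older'', but $L^{\CRES}$-convergence of $J\psi_k$ does not control the Gagliardo bilinear form on the left. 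You need $J\psi_k\to J\psi$ in $\dot H^s$, which comes from the chain you write \emph{afterwards} (applied to differences). The fix is simply to reorder: establish \eqref{EL-4} and the bound $[J\psi_k]_{\dot H^s}\le C\|\psi_k\|_{L^\beta}$ for Schwartz data first, deduce the $\dot H^s$-Cauchy property, identify the limit with $J\psi$ via \eqref{EL-1} and a.e.\ convergence (this is exactly the obstacle you flag), and only then take limits in \eqref{EL-4}. That is the order the paper follows.
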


We observe that $J$ above is the Riesz Potential.

\begin{proof}[Proof of Theorem \ref{THABC}] The claim in~\eqref{EL-1} follows from
an appropriate version of the
Hardy-Littlewood-Sobolev inequality, namely
Theorem~1
on page~119 of~\cite{stein:sing.int}, used here with~$\alpha:=2s$,
$p:=\beta$ and~$q:=\CRES$.

Now we take a sequence of smooth and rapidly decreasing functions~$\psi_j$
that converge to~$\psi$ in~$L^\beta(\R^n)$, and we set~$\Psi_j:=J\psi_j$.
We also set~$\Psi:=J\psi$. Thus, by~\eqref{EL-1}, we have that
$$ \| \Psi_j -\Psi\|_{L^{\CRES}(\R^n)}=
\| J(\psi_j -\psi)\|_{L^{\CRES}(\R^n)}\le C\|\psi_j-\psi\|_{L^{\beta}(\R^n)}\to0$$
as~$j\to+\infty$. Thus, up to a subsequence,
\begin{equation}\label{tyuifg890dfghj}
{\mbox{$\Psi_j\to\Psi$ a.e. in $\R^n$.}}
\end{equation}
Moreover, by a version of Parseval Identity (see e.g.
Lemma~2(b) in~\cite{stein:sing.int}),
we have that
\begin{equation}\label{new E psi-0}
\int_{\R^n} (J\psi_j)(x)\, \overline{g(x)}\,dx=c
\int_{\R^n} \hat\psi_{j}(\xi) \,|\xi|^{-2s}\,
\overline{\hat g(\xi)}\,d\xi,
\end{equation}
for some~$c>0$,
for every~$g$ that is smooth and rapidly decreasing (and possibly complex valued).
As standard, we have denoted by $\hat g=
{\mathcal{F}}g$ the Fourier transform of $g$.

Now, for any~$\phi$ smooth and rapidly decreasing
and any~$\delta>0$,
we take~$g_\delta$ to be the inverse Fourier transform of~$(|\xi|^2+\delta)^s
\hat \phi$, in symbols~$g_\delta:={\mathcal{F}}^{-1} \big( (|\xi|^2+\delta)^s
\hat \phi\big)$. We remark that $(|\xi|^2+\delta)^s\hat \phi$
is smooth and rapidly decreasing, hence so is~$g_\delta$.
Accordingly, \eqref{new E psi-0} implies that
\begin{equation}\label{very new E psi-0}
\int_{\R^n} (J\psi_j)(x)\, \overline{g_\delta (x)}\,dx=c
\int_{\R^n} \hat\psi_{j}(\xi) \,|\xi|^{-2s}\,\overline{(|\xi|^2+\delta)^s\hat \phi
(\xi)}\,d\xi.
\end{equation}
We claim that
\begin{equation}\label{678dd667uh}
{\mbox{$g_\delta \to
{\mathcal{F}}^{-1} (|\xi|^{2s}\hat \phi)$ in~$L^2(\R^n)$,
as $\delta\to0$.}}
\end{equation}
To check this, we use Plancherel Theorem to compute
\begin{equation}\label{7djdjjdfghqwe7aa}\begin{split}
& \| g_\delta- {\mathcal{F}}^{-1} (|\xi|^{2s}\hat \phi)\|^2_{L^2(\R^n)}
= \| \hat g_\delta- |\xi|^{2s}\hat \phi\|_{L^2(\R^n)}^2 \\
&\qquad=\big\|
[(|\xi|^2+\delta)^s-|\xi|^{2s}]\,\hat \phi\big\|_{L^2(\R^n)}^2
= \int_{\R^n} \big| (|\xi|^2+\delta)^s-|\xi|^{2s} \big|^2 \,|\hat \phi(\xi)|^2
\,d\xi.\end{split}
\end{equation}
Then we observe that, if~$\delta\in(0,1)$,
$$ \big| (|\xi|^2+\delta)^s-|\xi|^{2s} \big|^2 \le C\delta^{2s},$$
thus~\eqref{678dd667uh}
follows from~\eqref{7djdjjdfghqwe7aa}.

Moreover, since~$\psi_j$ is rapidly decreasing,
a direct computation with convolutions (see e.g. Lemma~5.1
in~\cite{DDDV}) gives that
\begin{equation} \label{yudvfi0bgdndndnd56}
|J\psi_j(x)|\le \frac{C_j}{1+|x|^{n-2s}},\end{equation}
for some~$C_j>0$. In particular, since~$n>4s$, we have that
\begin{equation}\label{J psi L2}
\Psi_j=J\psi_j\in L^2(\R^n).
\end{equation}
It is worth to point out that here is where the condition~$n>4s$
plays an important role.

As a matter of fact, the derivatives of~$\psi_j$ are rapidly decreasing
as well
and~$ \nabla\Psi_j =J(\nabla\psi_j)$, thus the
argument above also shows that~$\nabla\Psi_j\in L^2(\R^n,\R^n)$, and so
\begin{equation}\label{JH1}
\Psi_j \in H^1(\R^n).
\end{equation}
Using~\eqref{678dd667uh}, \eqref{J psi L2}
and the Plancherel Theorem, we conclude that
\begin{equation}\label{8dheief5tcvbvcx}
\begin{split}
&\lim_{\delta\to0}\int_{\R^n} (J\psi_j)(x)\, \overline{g_\delta (x)}\,dx
=\int_{\R^n} \Psi_j(x)\,
\overline{{\mathcal{F}}^{-1} (|\xi|^{2s}\hat \phi)(x)}\,dx\\ &\qquad=
\int_{\R^n} \hat\Psi_j(\xi)\,
\overline{|\xi|^{2s}\hat \phi(\xi)}\,d\xi=
\int_{\R^n} |\xi|^{2s} \hat\Psi_j(\xi)\,
\overline{\hat \phi(\xi)}\,d\xi.
\end{split}\end{equation}
Now we point out that, for~$\delta\in(0,1)$,
$$ \Big| |\xi|^{-2s}\,\overline{(|\xi|^2+\delta)^s \hat\phi(\xi)}\Big|\le
|\xi|^{-2s}\,(|\xi|^2+1)^s |\hat\phi(\xi)|$$
and this function is in~$L^1(\R^n)$, since~$n>2s$.
Accordingly, the Dominated Convergence Theorem gives that
$$\lim_{\delta\to0}
\int_{\R^n} \hat\psi_{j}(\xi) \,|\xi|^{-2s}\,\overline{(|\xi|^2+\delta)^s\hat \phi(\xi)}
\,d\xi=
\int_{\R^n} \hat\psi_{j}(\xi) \,\overline{\hat \phi(\xi)}
\,d\xi.$$
This, \eqref{very new E psi-0} and \eqref{8dheief5tcvbvcx}
imply that
\begin{equation}\label{very very new E psi-0}
\int_{\R^n} |\xi|^{2s} \hat\Psi_j (\xi)\,\overline{\hat\phi(\xi)}\,d\xi
=
c\int_{\R^n} \hat\psi_{j}(\xi) \,\overline{\hat \phi(\xi)}
\,d\xi=c\int_{\R^n} \psi_{j}(x) \,\phi(x)
\,dx
,\end{equation}
for any~$\phi$ smooth and rapidly decreasing.

Now we fix~$j\in\N$ and make use of \eqref{JH1}:
accordingly, by density,
we find a sequence~$\Psi_{j,k}$
of smooth and rapidly decreasing functions that converge to~$\Psi_j$
in $H^1(\R^n)$ as~$k\to+\infty$.

In particular, $\Psi_{j,k}\to \Psi_j$ in~$L^2(\R^n)$ and so,
by Plancherel Theorem, also~$\hat\Psi_{j,k}\to \hat\Psi_j$ in~$L^2(\R^n)$,
as $k\to+\infty$.
Moreover, $|\xi|^{2s}\le1$ if~$|\xi|\le1$ and~$|\xi|^{2s}\le|\xi|^2$
if~$|\xi|\ge1$, thus
\begin{equation}\label{7usdicvbbddsww} |\xi|^{2s}\le 1+|\xi|^2.\end{equation}
Consequently
\begin{equation}\begin{split}\label{aggiunto}
& \int_{\R^n} |\xi|^{2s} \big|\hat \Psi_{j,k}(\xi) -\hat \Psi_{j}(\xi)\big|^2\,d\xi\le
\int_{\R^n} (1+|\xi|^{2})\, \big|{\mathcal{F}} \big(\Psi_{j,k}(\xi) -\Psi_{j}(\xi)\big)\big|^2\,d\xi
\\ &\qquad\le C \| \Psi_{j,k} -\Psi_{j}\|_{H^1(\R^n)}^2 \to0
\end{split}\end{equation}
as~$k\to+\infty$, and therefore
$$ \lim_{k\to+\infty}\int_{\R^n} |\xi|^{2s}
\hat \Psi_j(\xi)\, \overline{\hat \Psi_{j,k}(\xi)}\,d\xi=
\int_{\R^n} |\xi|^{2s}
|\hat \Psi_j(\xi)|^2\,d\xi.$$
Then we
apply \eqref{very very new E psi-0}
with~$\phi:=\Psi_{j,k}$;
therefore
we see that
\begin{eqnarray*}
&& \int_{\R^n} |\xi|^{2s}
|\hat\Psi_j(\xi)|^2\,d\xi=\lim_{k\to+\infty}\int_{\R^n} |\xi|^{2s}
\hat\Psi_j(\xi)\, \overline{\hat\Psi_{j,k}(\xi)}\,d\xi
\\&&\qquad =\lim_{k\to+\infty}
c\int_{\R^n} \hat\psi_{j}(\xi) \,\overline{\hat \Psi_{j,k}(\xi)}\,d\xi
=
c\int_{\R^n} \hat\psi_{j}(\xi) \,\overline{\hat \Psi_{j}(\xi)}\,d\xi
.\end{eqnarray*}
Thus, by
the H\"older Inequality
with exponents~$\beta$ and~$2n/(n-2s)$, we obtain
\begin{eqnarray*}
&& \int_{\R^n} |\xi|^{2s}
|\hat\Psi_j(\xi)|^2\,d\xi =
c\int_{\R^n} \psi_{j}(\xi) \, \Psi_{j}(\xi)\,d\xi\\&&\qquad
\le c\,\|\psi_j\|_{L^\beta(\R^n)} \,\|\Psi_j\|_{L^{\CRES}(\R^n)}
\le C\,\|\psi_j\|_{L^\beta(\R^n)}^2,\end{eqnarray*}
where~\eqref{EL-1} was used in the last step.

This (together with the equivalence of the seminorm in $H^s(\R^n)$,
see Proposition 3.4 in \cite{DPV}) says that
$$ \iint_{\R^{2n}} \frac{|\Psi_j(x)-\Psi_j(y)|^2
}{|x-y|^{n+2s}}\,dx\,dy\le C \|\psi_j\|_{L^\beta(\R^n)}^2.$$
So we recall~\eqref{tyuifg890dfghj}
and we take limit as~$j\to+\infty$,
obtaining, by Fatou Lemma and the fact that $\psi_j\to\psi$ in $L^\beta(\R^n)$,
that
$$ \iint_{\R^{2n}} \frac{|\Psi(x)-\Psi(y)|^2
}{|x-y|^{n+2s}}\,dx\,dy\le C \|\psi\|_{L^\beta(\R^n)}^2,$$
that establishes~\eqref{EL-2}.

Now we prove~\eqref{EL-4}.
For this, we use~\eqref{EL-2} to see that
\begin{eqnarray*}
&& \iint_{\R^{2n}}
\frac{\big| (\Psi_j-\Psi)(x)-(\Psi_j-\Psi)(y)\big|^2}{|x-y|^{n+2s}}
\,dx\,dy
=[\Psi_j-\Psi]^2_{\dot H^s(\R^n)}\\ &&\qquad=
[J(\psi_j-\psi)]^2_{\dot H^s(\R^n)}\le C\,\|\psi-\psi_j\|^2_{L^\beta(\R^n)}\to0\end{eqnarray*}
as~$j\to+\infty$.
This says that the sequence of functions
$$ M_j(x,y):=
\frac{\Psi_j(x)-\Psi_j(y)}{|x-y|^{\frac{n+2s}2}}$$
converges to the function
$$ M(x,y):=
\frac{\Psi(x)-\Psi(y)}{|x-y|^{\frac{n+2s}2}}$$
in $L^2(\R^{2n})$.
In particular, this implies weak convergence in~$L^2(\R^{2n})$,
that is
$$ \lim_{j\to+\infty}\iint_{\R^{2n}} M_j(x,y)
\,{\gamma(x,y)}\,dx\,dy= \iint_{\R^{2n}} M(x,y)
\,{\gamma(x,y)}\,dx\,dy$$
for any~$\gamma\in L^2(\R^{2n})$.

Thus, if~$\phi$ is smooth and rapidly decreasing,
we can take
$$ \gamma(x,y):=
\frac{\phi(x)-\phi(y)}{|x-y|^{\frac{n+2s}2}}$$
and obtain that
\begin{eqnarray*}&& \lim_{j\to+\infty} \iint_{\R^{2n}}
\frac{\big(\Psi_j(x)-\Psi_j(y)\big)\,\big(\phi(x)-\phi(y)
\big)}{|x-y|^{n+2s}}\,dx\,dy \\ &&\qquad=
\iint_{\R^{2n}}
\frac{\big(\Psi(x)-\Psi(y)\big)\,\big(\phi(x)-\phi(y)
\big)}{|x-y|^{n+2s}}\,dx\,dy.\end{eqnarray*}
Moreover, since $\psi_j$ converges to $\psi$ in $L^\beta(\R^n)$, we have that
$$ \lim_{j\to+\infty}\int_{\R^n}\psi_j(x)\,\phi(x)\,dx=\int_{\R^n}\psi(x)\,\phi(x)\,dx.$$
Consequently,
we can pass to the limit~\eqref{very very new E psi-0}
and obtain~\eqref{EL-4} for any~$\phi$ which is smooth and rapidly
decreasing.

It remains to establish~\eqref{EL-4} for any~$\phi\in X^s$.
For this, we fix~$\phi\in X^s$
and we take a sequence~$\phi_k$ of
smooth and rapidly
decreasing functions that converge to~$\phi$ in~$\dot{H}^s(\R^n)$,
and so, by Lemma \ref{L SOB},
also in~$L^{\CRES}(\R^n)$.
Also, we know that~$\Psi\in \dot H^s(\R^n)$, thanks to~\eqref{EL-2}.
In particular, by Cauchy-Schwarz and H\"older inequalities,
we obtain that
\begin{eqnarray*}
&& \left|
\iint_{\R^{2n}}
\frac{\big(\Psi(x)-\Psi(y)\big)\,\big((\phi-\phi_k)(x)-(\phi-\phi_k)(y)
\big)}{|x-y|^{n+2s}}\,dx\,dy\right|
\\&&\qquad\le [\Psi]_{\dot H^s(\R^n)}\, [\phi-\phi_k]_{\dot H^s(\R^n)}\to0\\
&&{\mbox{ and }}
\left| \int_{\R^n} \psi(x)\, \Big(\phi(x)-\phi_k(x)\Big)\,dx\right|
\le \|\psi\|_{L^\beta(\R^n)}\,\|\phi-\phi_k\|_{L^{\CRES}(\R^n)}\to0
\end{eqnarray*}
as~$k\to+\infty$. Therefore,
we can write~\eqref{EL-4} for the smooth and rapidly
decreasing functions~$\phi_k$, pass to the limit in~$k$,
and so obtain~\eqref{EL-4} for~$\phi\in X^s$.
This completes the proof of~\eqref{EL-4}.

Now we prove~\eqref{EL-3}. For this, we use the H\"older Inequality
with exponents~$\frac{n}{2s}+\delta_o$ and $\frac{n+2s\delta_o}{n-2s(1-\delta_o)}$
and with exponents~$\beta$ and~$2n/(n-2s)$
to calculate
\begin{eqnarray*}
|J\psi(x)| &\le&
\int_{\R^n} \frac{|\psi(x-y)|}{|y|^{n-2s}}\,dy\\
&\le& \left[\int_{B_1}|\psi(x-y)|^{ \frac{n}{2s}+\delta_o}\,dy\right]^{
\frac{2s}{n+2s\delta_o}}\,\left[
\int_{B_1} \frac{dy}{|y|^{\frac{(n-2s)(n+2s\delta_o)}{n-2s(1-\delta_o)}}}\,dy\right]^{ \frac{n-2s(1-\delta_o)}{n+2s\delta_o}}
\\ &&+\left[ \int_{\R^n\setminus B_1} |\psi(x-y)|^\beta\,dy\right]^{\frac{1}{\beta}}
\,\left[
\int_{\R^n\setminus B_1} \frac{dy}{|y|^{2n}}\,dy\right]^{ \frac{n-2s}{2n} }
\\ &\le& C_{\delta_o}\,\Big( \|\psi\|_{L^{\frac{n}{2s}+\delta_o}(B_1)}+
\|\psi\|_{L^\beta(\R^n)}\Big),
\end{eqnarray*}
and this establishes~\eqref{EL-3}.
\end{proof}

We establish now a generalization of Theorem 8.2 in \cite{DFV},
that will provide us an $L^\infty$ estimate for the solutions
of some general kind of subcritical and critical problems in $\mathbb{R}^n$.

\begin{theorem}\label{LinfResult}
Let~$f$, $f_1,\cdots, f_K:\R^n\times\R\to\R$ be such that
\begin{equation*}
\begin{split}
& |f(x,r)|\le \sum_{i=1}^K f_i(x,r)\\
{\mbox{with }}\; &f_i(x,r)\le h_i(x) \,|r|^{\gamma_i}
\end{split}\end{equation*}
where
\begin{equation}\label{DIST}\begin{split}
& \gamma_1,\cdots,\gamma_K \in [0,\CRES-1)\\
{\mbox{and }}\;
& h_1,\cdots,h_K \in L^{m_i}(\R^n,[0,+\infty)), \; {\mbox{ with }} \;
m_i \in \left( \underline{m}_i,\,+\infty\right]\\
&\qquad{\mbox{ where }}
\underline{m}_i:=\left\{ \begin{matrix}
\displaystyle\frac{\CRES}{\CRES-2} & {\mbox{ if }} \gamma_i\in[0,\,1]\\
&\\
\displaystyle\frac{\CRES}{\CRES-1-\gamma_i}& 
{\mbox{ if }} \gamma_i\in(1,\,\CRES-1).
\end{matrix}\right.
\end{split}\end{equation}
Let~$u\in \dot{H}^s(\mathbb{R}^n)$ be a weak solution of
\begin{equation*}
(-\Delta)^s u= f(x,u(x)) \quad\hbox{in }\mathbb{R}^n.
\end{equation*}
Then
$$\|u\|_{L^\infty(\Omega)}\le C,$$ 
where~$C>0$ depends on~$n$, $s$, $\|u\|_{L^{\CRES}(\R^n)}$, $\gamma_i$, $m_i$
and~$\|h_i\|_{L^{m_i}(\R^n)}$.
\end{theorem}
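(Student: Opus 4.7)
The plan is to perform a Moser-type iteration tailored to the fractional Laplacian, starting from the fact that $u\in\dot H^s(\R^n)\subset L^{\CRES}(\R^n)$ by Lemma~\ref{L SOB}. Inductively I will upgrade the integrability $u\in L^{p_k}(\R^n)$ with a geometric sequence $p_{k+1}=\theta p_k$, $\theta>1$, and then pass to the limit to get the $L^\infty$ bound. The role of the hypothesis \eqref{DIST} will be precisely to guarantee two things: (i) each iteration step actually increases the exponent, and (ii) the constants along the iteration remain summable after taking $k$-th roots, so Moser's trick produces a finite limit.

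Concretely, for $\beta\ge 0$ and $T>0$ I would test the weak formulation of $(-\Delta)^s u=f(x,u)$ against a truncated power $\varphi=u\,\min(|u|^{2\beta},T^{2\beta})$, which lies in $X^s$ once $u\in L^\infty\cap\dot H^s$ is known locally (and after a standard $T\to\infty$ limit in general). Using the well-known pointwise convexity inequality
$$(a-b)\bigl(a|a|^{2\beta}-b|b|^{2\beta}\bigr)\ge \frac{1+2\beta}{(1+\beta)^2}\,\Bigl(a|a|^{\beta}-b|b|^{\beta}\Bigr)^{2}$$
on the double-integral quantity of the fractional seminorm, one gets
$$\bigl[|u|^{\beta}u\bigr]_{\dot H^s(\R^n)}^{2}\le C_\beta\int_{\R^n} f(x,u)\,u\,|u|^{2\beta}\,dx\le C_\beta\sum_{i=1}^K\int_{\R^n}h_i(x)\,|u|^{\gamma_i+1+2\beta}\,dx.$$
Applying Lemma~\ref{L SOB} to $|u|^{\beta}u$ on the left-hand side and H\"older's inequality with exponents $m_i$ and $m_i/(m_i-1)$ on the right-hand side yields
$$\||u|^{\beta+1}\|_{L^{\CRES}(\R^n)}^{2}\le C_\beta\sum_{i=1}^K\|h_i\|_{L^{m_i}(\R^n)}\,\|u\|_{L^{q_i(\beta)}(\R^n)}^{\gamma_i+1+2\beta},$$
where $q_i(\beta):=(\gamma_i+1+2\beta)\,m_i/(m_i-1)$. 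The two cases in \eqref{DIST} (and in particular the bound $m_i>\underline m_i$) are exactly what is needed to verify, at the initial step $\beta=0$, that $q_i(0)\le\CRES$, and, more generally, that the feedback exponent $2(\beta+1)\CRES$ on the left dominates $\gamma_i+1+2\beta$ on the right by a fixed factor $\theta=\CRES/(\gamma_i+1)>1$ (using $\gamma_i<\CRES-1$). Iterating with $\beta$ chosen so that $q_i(\beta)$ equals the integrability already known at the previous step produces the desired geometric progression.

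The main obstacle will be keeping the constants under control through the iteration: at each step one picks up a factor $C_\beta^{1/(2(\beta+1))}$ whose logarithm must form a convergent series so that the iterated $L^{p_k}$ bound does not blow up as $k\to\infty$. This is handled by the standard Moser estimate $C_\beta\le C(1+\beta)^{A}$ together with the geometric nature of the exponents $p_k$. Two further technical points require attention: first, justifying the convexity inequality with the truncation $T<\infty$ and then sending $T\to\infty$ by monotone/dominated convergence, since a priori one only knows $u\in L^{\CRES}$ and not higher integrability; second, splitting each $h_i$-integral into the region $\{|u|\le1\}$, where $|u|^{\gamma_i+1+2\beta}$ is controlled by $|u|^{1+2\beta}$, and $\{|u|>1\}$, where the H\"older estimate above applies, in order to absorb the small-$u$ contribution uniformly. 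Once these points are settled the iteration yields $\|u\|_{L^\infty(\R^n)}\le C$ with $C$ depending only on the quantities listed in the statement.
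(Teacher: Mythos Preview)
Your Moser iteration is a legitimate route to the result, but it is \emph{not} the one the paper takes. The paper runs a De~Giorgi--Stampacchia level-set scheme: it rescales to $\phi:=\delta u/\|u\|_{L^{\CRES}}$, sets $w_k:=(\phi-A_k)^+$ with $A_k=1-2^{-k}$, tests the equation with $w_{k+1}$, and derives a nonlinear recursion $U_{k+1}\le C^k U_k^{\vartheta}$, $\vartheta>1$, for $U_k:=\|w_k\|_{L^{\CRES}}^{\CRES}$. The smallness of $U_0=\delta^{\CRES}$ forces $U_k\to0$, hence $\phi\le1$. A notable device in the paper is the auxiliary exponent $a_i\in(0,1]$ that splits $|u|^{\gamma_i}$ as $|u|^{(1-a_i)\gamma_i}\cdot|u|^{a_i\gamma_i}$ and replaces the second factor by $w_k^{a_i\gamma_i}$ on the super-level set; this interpolation is what produces the sharp threshold $\underline m_i$ in the two regimes $\gamma_i\le1$ and $\gamma_i>1$.

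What each approach buys: the Stampacchia argument gives the one-sided estimate directly (the paper actually proves the stronger statement for sub- and supersolutions separately) and avoids any tracking of constants through an infinite tower of $L^p$ norms, at the price of the somewhat delicate choice of the parameters $a_i$ and $\xi_i$. Your Moser scheme is conceptually more linear---just H\"older at each step---and produces all intermediate $L^p$ bounds as a byproduct; the cost is the bookkeeping on $C_\beta$ and the need to justify the truncated test function in $\dot H^s(\R^n)$, both of which you have flagged. One point in your sketch is imprecise: the phrase ``$2(\beta+1)\CRES$ on the left dominates $\gamma_i+1+2\beta$ on the right by a factor $\theta=\CRES/(\gamma_i+1)$'' mixes powers with Lebesgue exponents. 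The correct gain is that the output exponent $(1+\beta)\CRES$ exceeds the input exponent $q_i(\beta)=(\gamma_i+1+2\beta)\,m_i/(m_i-1)$, and the ratio tends to $\tfrac{\CRES}{2}\bigl(1-\tfrac1{m_i}\bigr)>1$ as $\beta\to\infty$; the strict inequality $m_i>\CRES/(\CRES-2)$, which follows from \eqref{DIST} in both regimes, is exactly what makes this ratio exceed $1$. With that correction your outline goes through.
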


\begin{proof} We will prove a stronger statement, namely that
if~$(-\Delta)^s u\le f(x,u(x))$ in the weak sense
and~$f(x,r)\le \sum_{i=1}^K f_i(x,r)$, with~$f_i$ as above, then~$u$
is bounded from above (the bound from below 
when~$(-\Delta)^s u\ge f(x,u(x))$ can be obtained similarly
under the corresponding growth assumptions).

To prove the desired bound on~$u$
we will use an argument that goes back to Stampacchia. 
Throughout the proof 
\begin{equation}\label{CCCACC}\begin{split}&
{\mbox{we will denote by~$C>0$ a quantity that}}\\&{\mbox{may
depend on~$n$, $s$, $m_i$, $\gamma_i$, $\|u\|_{L^{\CRES}(\R^n)}$
and $\|h_i\|_{L^{m_i}(\R^n)}$,}}\end{split}
\end{equation}
and which may change from line to line.

Notice that if~$u$ vanishes identically then the claim trivially follows, 
therefore we assume that~$u$ does not vanish identically.

Also, we rewrite the condition on~$m_i$
in~\eqref{DIST} as
\begin{equation}\label{DD} 0\le \frac{1}{m_i}<
\left\{ \begin{matrix}
\displaystyle\frac{\CRES-2}{\CRES} & {\mbox{ if }} \gamma_i\in[0,\,1]\\
&\\
\displaystyle\frac{\CRES-1-\gamma_i}{\CRES}& 
{\mbox{ if }} \gamma_i\in(1,\,\CRES-1).
\end{matrix}\right.\end{equation}
In any case
\begin{equation}\label{DIST2} \frac{1}{m_i}<
\frac{\CRES-1-\gamma_i}{\CRES}.\end{equation}
Now, we set
$$\Theta_i := \frac{1}{m_i}+\frac{2-\CRES+\gamma_i}{\CRES}\in\R,$$
and we claim that
\begin{equation}\label{CHE}
\Theta_i <\frac{\gamma_i}{\CRES}\,\min\{1,\,\gamma_i^{-1}\}.
\end{equation}
Indeed, if~$\gamma_i\in[0,\,1]$,by~\eqref{DD}
we have that
$$ \Theta_i < \frac{\CRES-2}{\CRES} +\frac{2-\CRES+\gamma_i}{\CRES}
=\frac{\gamma_i}{\CRES}
=\frac{\gamma_i}{\CRES}\,\min\{1,\,\gamma_i^{-1}\}$$
which implies~\eqref{CHE} in this case. 
If~$\gamma_i\in(1,\,\CRES-1)$, again by~\eqref{DD} we get
$$ \Theta_i < 
\frac{\CRES-1-\gamma_i}{\CRES}+\frac{2-\CRES+\gamma_i}{\CRES}
= \frac{1}{\CRES}=
\frac{\gamma_i\,\gamma_i^{-1}}{\CRES}=
\frac{\gamma_i}{\CRES}\,\min\{1,\,\gamma_i^{-1}\},$$
which completes the proof of~\eqref{CHE}.

We observe that \eqref{DD} also implies that 
when~$\gamma_i=0$ then~$\Theta_i<0$, and so~$\Theta_i/\gamma_i=-\infty$. Thus~\eqref{CHE} gives that
$$ \frac{\CRES}{\gamma_i}\Theta_i
<  \min\{ 1, \,\gamma_i^{-1}\}.$$
Hence, we can introduce an additional set of parameters~$a_i$, 
fixed arbitrarily such that
\begin{equation}\label{89.1}
\max\left\{0,\,\frac{\CRES}{\gamma_i}\Theta_i\right\}
< a_i\le \min\{ 1, \,\gamma_i^{-1}\}.
\end{equation}
We notice that
\begin{equation}\label{89.2}
a_i\in [0,\,1],\qquad \gamma_i a_i\le 1
\end{equation}
and
\begin{equation}\label{89.3}
\frac{\gamma_i a_i}{\CRES} > \Theta_i
= \frac{1}{m_i}+\frac{2-\CRES+\gamma_i}{\CRES}.
\end{equation}

Now, let $0<\delta<1$ to be chosen later, and define
\begin{equation}\label{def aggiunta}
\phi(x):=\frac{\delta u(x)}{\|u\|_{L^{\CRES}(\mathbb{R}^n)}},\quad x\in\mathbb{R}^n.
\end{equation}
Thus, using the notation in \eqref{CCCACC}, we can write
\begin{equation}\label{notation}
\phi=C\delta u.
\end{equation}
Also, 
\begin{equation}\label{0990990}
\|\phi\|_{L^{\CRES}(\mathbb{R}^n)}=\delta.
\end{equation}
Moreover, $\phi$ solves weakly
\begin{equation}\label{eqPhi}
(-\Delta)^s\phi \le g \quad \hbox{in }\mathbb{R}^n,
\end{equation}
where
$$ g(x):=\frac{\delta}{\|u\|_{L^{\CRES}(\R^n)}}f(x,u(x)).$$
We observe that
\begin{equation}\label{PG1}\begin{split}
& g(x)\le \sum_{i=1}^K g_i(x),\\
{\mbox{with }}&g_i(x):=
\frac{\delta}{\|u\|_{L^{\CRES}(\R^n)}} \,f_i(x,u(x))
\le C\delta h_i(x) \,|u(x)|^{\gamma_i}.
\end{split}\end{equation}

Now, for every integer $k\in\mathbb{N}$,
let us define $A_k:=1-2^{-k}$ and the functions
$$w_k(x):=(\phi(x)-A_k)^+,\quad\hbox{ for every }x\in\mathbb{R}^n.$$
By construction, $w_k\in\dot{H}^s(\mathbb{R}^n)$ and 
\begin{equation}\label{ww}
w_{k+1}(x)\leq w_k(x) \quad {\mbox{ a.e. in }}\mathbb{R}^n.
\end{equation}
Moreover, following \cite{DFV} it can be checked that for any $k\in\mathbb{N}$,
\begin{equation}\label{subsets}
\{w_{k+1}>0\}\subseteq \{w_k>2^{-(k+1)}\}
\end{equation}
and
\begin{equation}\label{boundPhi}
0<\phi(x)<2^{k+1}w_k(x)\quad\hbox{ for any }x\in \{w_{k+1}>0\}.
\end{equation}
Consider now
\begin{equation}\label{Ukkk}
U_k:=\|w_k\|_{L^{\CRES}(\mathbb{R}^n)}^{\CRES}.
\end{equation}
Thus, applying (8.10) of \cite{DFV} with $v:=\phi-A_{k+1}$ we obtain
\begin{equation}\begin{split}\label{wqpgftuiqweg}
[w_{k+1}]_{\dot{H}^s(\mathbb{R}^n)}^2=&\iint_{\mathbb{R}^{2n}}{\frac{|w_{k+1}(x)-w_{k+1}(y)|^2}{|x-y|^{n+2s}}\,dx\,dy}\\
\leq&\iint_{\mathbb{R}^{2n}}{\frac{(\phi(x)-\phi(y))(w_{k+1}(x)-w_{k+1}(y))}{|x-y|^{n+2s}}\,dx\,dy}.
\end{split}\end{equation}

Now, from~\eqref{boundPhi} we have that~$|\phi|<2^{k+1}w_k$ in~$\{w_{k+1}>0\}$, 
and so~$\delta|u|<C^kw_k$, thanks to~\eqref{notation}. 
Therefore, using the parameters~$a_i$ introduced in~\eqref{89.1}
and~\eqref{89.2}, we can estimate~$g_i$ given in~\eqref{PG1}
as
\begin{eqnarray*}
g_i &\le& C\delta^{1-a_i\gamma_i}\; h_i
\;(\delta\,|u|)^{a_i\gamma_i} \;|u|^{(1-a_i)\gamma_i} \\
&\le& C^k \delta^{1-a_i\gamma_i} \; h_i\;
w_k^{a_i\gamma_i}\;|u|^{(1-a_i)\gamma_i}\\
&\le& C^k \, h_i\;
w_k^{a_i\gamma_i}\;|u|^{(1-a_i)\gamma_i}
\end{eqnarray*}
in the set~$\{w_{k+1}>0\}$.
Hence, we use $w_{k+1}$ as a test function in \eqref{eqPhi} and we obtain that
\begin{equation}\begin{split}\label{estwk}
&\iint_{\mathbb{R}^{2n}}{\frac{(\phi(x)-\phi(y))(w_{k+1}(x)-w_{k+1}(y))}{|x-y|^{n+2s}}\,dx\,dy} \leq \int_{\R^n}g(x)\,w_{k+1}(x)\,dx \\
&\quad \leq C^k \sum_{i=1}^{K} \int_{  \{w_{k+1}>0\}  } 
h_i(x)\,w_k^{a_i\gamma_i}(x)\,w_{k+1}(x)\,|u(x)|^{(1-a_i)\gamma_i}\,dx. 
\end{split}\end{equation}
This and~\eqref{wqpgftuiqweg} imply that 
$$ [w_{k+1}]_{\dot{H}^s(\mathbb{R}^n)}^2\le C^k \sum_{i=1}^{K} \int_{  \{w_{k+1}>0\}  } h_i(x)\,w_k^{a_i\gamma_i}(x)\,w_{k+1}(x)\,|u(x)|^{(1-a_i)
{\gamma_i}}\,dx,$$
and so, recalling~\eqref{Ukkk}, we get 
\begin{equation}\label{qpquiwnd}
c\,U_{k+1}^{2/\CRES}\le  C^k \sum_{i=1}^{K} \int_{  \{w_{k+1}>0\}  } 
h_i(x)\,w_k^{a_i\gamma_i}(x)\,w_{k+1}(x)\,|u(x)|^{(1-a_i)\gamma_i}\,dx.
\end{equation}
In order to estimate the right hand side of~\eqref{qpquiwnd}, 
we introduce a new set of parameters: 
we recall~\eqref{DIST2} and obtain that
\begin{equation}\label{XII}
\xi_i := \frac{\CRES-1-\gamma_i}{\CRES}-\frac{1}{m_i} =1-\frac{1+\gamma_i}{\CRES}-
\frac{1}{m_i}\in (0,1).\end{equation}
Therefore, using~\eqref{ww}
and the H\"older inequality with exponents
$$ m_i, \quad \frac{\CRES}{1+a_i\gamma_i},\quad
\frac{\CRES}{(1-a_i)\gamma_i},\quad \frac{1}{\xi_i}$$
we obtain that, for any~$i\in\{1,\cdots,K\}$,
\begin{equation}\label{P0}
\begin{split}
& \int_{\{w_{k+1}>0\}}
h_i(x)\,
w_k^{a_i\gamma_i}(x)\,w_{k+1}(x)\,|u(x)|^{(1-a_i)\gamma_i} \,dx\\
& \quad\le\int_{\{w_{k+1}>0\}}
h_i(x)\,
w_k^{1+a_i\gamma_i}(x)\,|u(x)|^{(1-a_i)\gamma_i} \,dx\\
&\quad\le 
\left[\int_{\{w_{k+1}>0\}} h_i^{m_i}(x)\,dx\right]^{\frac{1}{m_i}}
\,\left[\int_{\{w_{k+1}>0\}}w_k^{\CRES}(x)
\,dx\right]^{\frac{1+a_i\gamma_i}{\CRES}} \,
\\ &\quad\cdot\left[\int_{\{w_{k+1}>0\}}
|u(x)|^{\CRES} \,dx\right]^{\frac{(1-a_i)\gamma_i}{\CRES}}
\left[\int_{\{w_{k+1}>0\}} 1\,dx\right]^{\xi_i}\\
&\quad\leq \|h_i\|_{L^{m_i}(\R^n)}\; U_k^{\frac{1+a_i\gamma_i}{\CRES}}
\;\|u\|_{L^{\CRES}(\R^n)}^{(1-a_i)\gamma_i}\;
\big| \{w_{k+1}>0\} \big|^{\xi_i}.
\end{split}
\end{equation}
On the other hand, by \eqref{subsets}
\begin{eqnarray*}
U_k&=&\|w_k\|_{L^{\CRES}(\mathbb{R}^n)}^{\CRES}\geq \int_{\{w_k>2^{-(k+1)}\}}{w_k^{\CRES}\,dx}\\
&\geq& 2^{-\CRES(k+1)}|\{w_{k}>2^{-(k+1)}\}|\geq 2^{-\CRES(k+1)}|\{w_{k+1}>0\}|,
\end{eqnarray*}
and thus,
$$
|\{w_{k+1}>0\}|^{\xi_i}\leq (2^{\CRES(k+1)}U_k)^{\xi_i}\le C^k U_k^{\xi_i}.
$$
Using this in~\eqref{P0} we have
\begin{equation}\label{TO2Y}
\int_{\{w_{k+1}>0\}}
h_i(x)\,
w_k^{a_i\gamma_i}(x)\,w_{k+1}(x)\,|u(x)|^{(1-a_i)\gamma_i} \,dx
\le C^k \, U_k^{\tau_i},
\end{equation}
with
$$ \tau_i:=\frac{1+a_i\gamma_i}{\CRES}+\xi_i.$$
Notice that~\eqref{XII} and~\eqref{89.3} imply that
\begin{equation}\label{TO9}\begin{split}
\tau_i \,&=\frac{1+a_i\gamma_i}{\CRES}+
\frac{\CRES-1-\gamma_i}{\CRES}-\frac{1}{m_i}\\
&= \frac{a_i\gamma_i}{\CRES}+\frac{\CRES-\gamma_i}{\CRES}-\frac{1}{m_i}\\
&> \frac{1}{m_i}+\frac{2-\CRES+\gamma_i}{\CRES}
+\frac{\CRES-\gamma_i}{\CRES}-\frac{1}{m_i}\\
&=\frac{2}{\CRES}.\end{split}
\end{equation}
Thus, inserting~\eqref{TO2Y} into~\eqref{qpquiwnd} we obtain that
\begin{equation}\label{TO5}
U_{k+1}^{2/\CRES} \le C^k \sum_{i=1}^{K} U_k^{\tau_i},\end{equation}
up to renaming~$C$. We observe that
\begin{equation}\label{step 0}
U_0=\|\phi^+\|_{L^{\CRES}(\R^n)}^{\CRES}\le \|\phi\|_{L^{\CRES}(\R^n)}^{\CRES}=\delta^{\CRES},
\end{equation}
thanks to~\eqref{0990990}. 
As a matter of fact, we have that~$U_k\le U_0\le 1$, so we can define
\begin{equation}\label{tau-bis}\tau:=
\min\{\tau_1,\dots,\tau_N\}.\end{equation} 
Hence~\eqref{TO5} becomes
\begin{equation*}
U_{k+1}^{2/\CRES} \le C^k U_k^{\tau},\end{equation*}
and so
\begin{equation}\label{TO55}
U_{k+1}\le C^k U_k^{\vartheta},\end{equation}
with~$\vartheta:= \CRES\tau/2$, after renaming~$C$.
Notice that by~\eqref{TO9}
and~\eqref{tau-bis}
$$ \vartheta>1.$$  
This, \eqref{TO55} and~\eqref{step 0} imply that
$$ \lim_{k\to+\infty} U_k=0$$
as long as~$\delta$ is fixed sufficiently small (in dependence of
the above~$C$). 
Moreover, since $0\leq w_k\leq |\phi|\in L^{\CRES}(\mathbb{R}^n)$ for any $k\in\mathbb{N}$
and $\displaystyle\lim_{k\rightarrow\infty}{w_k}=(\phi-1)^+$ a.e. in $\mathbb{R}^n$, by the Dominated Convergence Theorem we get
$$\lim_{k\rightarrow\infty}{U_k}=\|(\phi-1)^+\|_{L^{\CRES}(\mathbb{R}^n)}^{\CRES}=0,$$
and therefore $\phi\leq 1$ a.e. in $\mathbb{R}^n$.
Thus, recalling the definition of $\phi$ in~\eqref{def aggiunta}, we conclude that
$$u(x)\leq \frac{\|u\|_{L^{\CRES}(\mathbb{R}^n)}}{\delta},$$
with $\delta\in (0,1)$ fixed.
This concludes the proof of Theorem~\ref{LinfResult}.
\end{proof}

\section{The Lyapunov-Schmidt reduction}\label{qless1}
In this section we perform the Lyapunov-Schmidt reduction.
Since the argument is delicate and involves many lemmata,
we prefer to develop it in different steps.

\subsection{Preliminaries on the functional setting}
Given $0<\mu_1<\mu_2$ and $R>0$,
we define the manifold
\begin{equation}\label{critman}
Z_0:=\{z_{\mu,\xi} {\mbox{ s.t. }} \mu_1 <\mu <\mu_2, \ |\xi|< R\},
\end{equation}
where $z_{\mu,\xi}$ was introduced in \eqref{sol}.
We will perform our choice of $R$, $\mu_1$ and $\mu_2$ later on.
Notice that the functions in $Z_0$ are critical points of $f_0$,
as defined in \eqref{fzero}.

We will often implicitly identify $Z_0$ with the
subdomain $(\mu_1,\mu_2)\times B_R$
of $\R^{n+1}$
described by coordinates $(\mu,\xi)$.

In order to apply the abstract variational method
discussed in the introduction, we would
need in principle
the functional $f_\epsilon$ defined in \eqref{feps}
to be $C^2$ on $\dot{H}^s(\R^n)$.
Unfortunately, this is not true if $q<1$, and therefore,
in order to treat the whole set of values $q\in(0,p)$,
we recall that~$\omega$ is the support of the function~$h$ and we set
\begin{eqnarray}
a &:=& \inf \{ z_{\mu,\xi}(x) {\mbox{ s.t. }} x\in\omega, \ \mu_1<\mu<\mu_2, \ |\xi|<R\},
\nonumber\\
V &:=& \{w\in X^s {\mbox{ s.t }} \|w\|_{X^s}<a/2\} \nonumber\\
{\mbox{and }} U &:= &\left\{u:=z_{\mu,\xi}+w {\mbox{ s.t. }} z_{\mu,\xi}\in Z_0, \ w\in V\right\}.
\label{defU}
\end{eqnarray}
We observe that, if $u\in U$ and $x\in\omega$, then
$$ u(x)=z_{\mu,\xi}(x)+w(x)\ge a-\|w\|_{L^\infty(\R^n)}\ge a-\|w\|_{X^s}
>a-\frac{a}{2}=\frac{a}{2},$$
and so
\begin{equation}\label{PP8} u(x)>\frac{a}{2}>0 \ {\mbox{ for any }} x\in\omega. \end{equation}
Therefore, recalling \eqref{Gu}, we obtain that the functional $G$ is $C^2$ on $U$.
Hence, also $f_\epsilon:U\rightarrow\R$ is of class $C^2$.

Now, we set
\begin{equation}\label{qj}
 q_j:=\frac{\partial z_{\mu,\xi}}{\partial\xi_j}, \ j=1,\ldots,n, \ {\mbox{ and }} \
q_{n+1}:=\frac{\partial z_{\mu,\xi}}{\partial\mu},
\end{equation}
and we notice that~$q_j$ satisfies
\begin{equation}\label{EQqj}
(-\Delta)^s q_j=pz_{\mu,\xi}^{p-1}q_j  \ {\mbox{ in }} \R^n
\end{equation}
for every~$j=1,\dots,n+1$. We also denote by
$$ T_{z_{\mu,\xi}}Z_0 := span\left\lbrace q_1,\ldots,q_{n+1} \right\rbrace $$
the tangent space to $Z_0$ at $z_{\mu,\xi}$.

Moreover, $\langle \cdot, \cdot\rangle$ denotes the scalar product in $\dot{H}^s(\R^n)$,
that is, for any $v_1,v_2\in\dot{H}^s(\R^n)$,
$$ \langle v_1, v_2\rangle=
\iint_{\R^{2n}} \frac{\big( v_1(x)-v_1(y)\big)\,\big(
v_2(x)-v_2(y)\big)}{|x-y|^{n+2s}}\,dx\,dy.$$
We also define the notion of orthogonality with respect to such scalar product
and we denote it by $\perp$. That is, we set
$$ \left(T_{z_{\mu,\xi}}Z_0\right)^{\perp} :=
\Big\{ v \in \dot H^s(\R^n) {\mbox{ s.t. }}
\langle v, \phi\rangle =0 {\mbox{ for all }}
\phi \in T_{z_{\mu,\xi}}Z_0
\Big\}.$$
In particular, we prove the following orthogonality result.

\begin{lemma}\label{eigen}
There exist~$\lambda_i=\lambda_i(\mu,\xi)$, for~$i=1,\dots,n+1$,
such that
$$ \langle q_i,q_j\rangle =\left\{
\begin{matrix}
0 & {\mbox{ if }} \ i\ne j,\\
\lambda_i& {\mbox{ if }} \ i=j,
\end{matrix}
\right. $$
and
$$ \inf_{ {{\mu\in(\mu_1,\mu_2)}\atop{|\xi|<R}}\atop{i\in\{1,\dots,n+1\}} }
\lambda_i(\mu,\xi)>0.$$
\end{lemma}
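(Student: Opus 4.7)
My approach is first to exploit the scaling covariance of the $\dot H^s$ seminorm to reduce all the inner products to a fixed matrix $A$ at the base point $(\mu,\xi)=(1,0)$, then to use the linearized equation \eqref{EQqj} together with the radial symmetry of $z_0$ to show that $A$ is diagonal with strictly positive diagonal entries.

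Differentiating \eqref{sol} shows that $q_i(x)=\mu^{-1}(T_{\mu,\xi}\tilde q_i^0)(x)$, where $T_{\mu,\xi}u(x):=\mu^{(2s-n)/2}u\bigl((x-\xi)/\mu\bigr)$ is an isometry for $[\,\cdot\,]_{\dot H^s(\R^n)}$, and where the $\tilde q_i^0$ are the analogous derivatives at $(1,0)$: explicitly, $\tilde q_j^0=-\partial_{y_j}z_0$ for $j\le n$ and $\tilde q_{n+1}^0=\tfrac{2s-n}{2}z_0-y\cdot\nabla z_0$. Consequently
\begin{equation*}
\langle q_i,q_j\rangle \;=\; \mu^{-2}\,\langle \tilde q_i^0,\tilde q_j^0\rangle \;=:\; \mu^{-2}\,A_{ij},
\end{equation*}
so the whole matrix reduces to $\mu^{-2}A$, with the constant matrix $A$ depending only on $n,s$ (the $\xi$-dependence drops out by translation invariance).

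To analyze $A$, I would test the linearized equation $(-\Delta)^s\tilde q_j^0=pz_0^{p-1}\tilde q_j^0$ (the specialization of \eqref{EQqj} at $(1,0)$) against $\tilde q_i^0$ via the weak formulation introduced before Theorem \ref{TH1}. The admissibility of $\tilde q_i^0$ as test functions in $X^s$ follows from the explicit polynomial decay of $z_0$ and its derivatives, combined with Theorem \ref{THABC}. This yields $A_{ij}=p\int_{\R^n}z_0^{p-1}\tilde q_i^0\,\tilde q_j^0\,dy$. Now radial symmetry of $z_0$ makes $\tilde q_j^0$ odd in $y_j$ and even in every other coordinate for $j\le n$, while $\tilde q_{n+1}^0$ is radial. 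Every off-diagonal integrand is therefore odd in at least one coordinate, forcing $A_{ij}=0$ for $i\neq j$. Diagonal positivity is immediate for $j\le n$; for $i=n+1$ a direct calculation from \eqref{zetazero} gives $\tilde q_{n+1}^0(y)=\tfrac{n-2s}{2}\alpha_{n,s}(|y|^2-1)(1+|y|^2)^{-(n-2s+2)/2}\not\equiv 0$, whence $A_{n+1,n+1}>0$. (Alternatively, the nondegeneracy theorem of \cite{DDS} guarantees linear independence of the $\tilde q_i^0$, which combined with the already-established diagonality forces $A_{n+1,n+1}\neq 0$.)

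Setting $\lambda_i(\mu,\xi):=\mu^{-2}A_{ii}$ then delivers orthogonality together with the infimum estimate: since $\mu<\mu_2$ one has $\mu^{-2}\ge\mu_2^{-2}$, and hence $\lambda_i\ge\mu_2^{-2}\min_i A_{ii}>0$. The only mildly delicate point is the reduction to the integral formula for $A_{ij}$, namely checking that $\tilde q_i^0\in X^s$ and that the right-hand side integral converges; everything after that is a routine exploitation of the symmetries of $z_0$.
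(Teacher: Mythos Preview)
Your proof is correct and follows essentially the same route as the paper: both arguments convert $\langle q_i,q_j\rangle$ into $p\int z^{p-1}q_iq_j$ via the linearized equation \eqref{EQqj}, then use the odd/even parity of the derivatives to kill the off-diagonal terms and obtain strictly positive diagonal entries. The only organizational difference is that you first reduce, via the isometry $T_{\mu,\xi}$, to the fixed matrix $A$ at $(\mu,\xi)=(1,0)$ and read off $\lambda_i=\mu^{-2}A_{ii}$, which makes the uniform lower bound over $(\mu_1,\mu_2)\times B_R$ immediate; the paper instead works at general $(\mu,\xi)$ and asserts the resulting constants $c_1,c_2$ are ``bounded from zero uniformly'' without isolating the $\mu^{-2}$ scaling explicitly.
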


\begin{proof} For any~$r\ge0$, we write
$$ \bar z(r):=\frac{\alpha_{n,s}}{(1+r)^{(n-2s)/2}}.$$
In this way, recalling the definition in \eqref{zetazero},
we have that~$z_0(x)=\bar z(|x|^2)$ and so
$$ z_{\mu,\xi}(x)=\mu^{(2s-n)/2} \bar z \left(\frac{|x-\xi|^2}{\mu^2}\right).$$
So we obtain that
$$ \frac{\partial z_{\mu,\xi}}{\partial\xi_i}(x)=
\mu^{(2s-n)/2} \bar z' \left( \frac{|x-\xi|^2}{\mu^2}\right)
\frac{2(\xi_i-x_i)}{\mu^2}$$
and therefore
$$ \frac{\partial z_{\mu,\xi}}{\partial\xi_i}(y+\xi)=
\mu^{(2s-n)/2} \bar z' \left( \frac{|y|^2}{\mu^2}\right)
\frac{2(-y_i)}{\mu^2},$$
which is odd in the variable~$y_i$.

Similarly,
$$ \frac{\partial z_{\mu,\xi}}{\partial\mu}(x)=
\frac{2s-n}{2} \mu^{(2s-n-2)/2} \bar z \left(\frac{|x-\xi|^2}{\mu^2}\right)
-\mu^{(2s-n)/2} \bar z'\left(\frac{|x-\xi|^2}{\mu^2}\right)
\frac{2|x-\xi|^2}{\mu^3},$$
thus
\begin{equation}\label{RHS99} \frac{\partial z_{\mu,\xi}}{\partial\mu}(y+\xi)=
\frac{2s-n}{2} \mu^{(2s-n-2)/2} \bar z \left(\frac{|y|^2}{\mu^2}\right)
-\mu^{(2s-n)/2} \bar z'\left(\frac{|y|^2}{\mu^2}\right)
\frac{2|y|^2}{\mu^3},\end{equation}
that is even in any of the variables~$y_i$.

\noindent Notice also that
$$ z_{\mu,\xi}(y+\xi)=\mu^{(2s-n)/2} \bar z \left(\frac{|y|^2}{\mu^2}\right),$$
which is also even in any of the variables~$y_i$.
As a consequence, using the change of variable~$x=y+\xi$ we obtain that,
for any~$i$, $j\in\{1,\dots,n\}$,
\begin{equation}\label{TT1}
\begin{split}
&\int_{\R^n} z_{\mu,\xi}^{p-1}(x)
\,\frac{\partial z_{\mu,\xi}}{\partial\xi_i}(x)
\,\frac{\partial z_{\mu,\xi}}{\partial\xi_j}(x)\,dx\\ &\qquad=
\int_{\R^n} z_{\mu,\xi}^{p-1}(y+\xi)
\,\frac{\partial z_{\mu,\xi}}{\partial\xi_i}(y+\xi)
\,\frac{\partial z_{\mu,\xi}}{\partial\xi_j}(y+\xi)\,dy
\\ &\qquad=
\int_{\R^n}
\mu^{(2s-n)(p+1)/2} \bar z^{p-1} \left(\frac{|y|^2}{\mu^2}\right)
(\bar z')^2\left( \frac{|y|^2}{\mu^2}\right)
\frac{4y_iy_j}{\mu^2}\,dy\\ &\qquad=
\left\{
\begin{matrix}
0 & {\mbox{ if $i\ne j$,}}\\
c_1 & {\mbox{ if $i= j$,}}
\end{matrix}
\right.
\end{split}\end{equation}
for some~$c_1>0$, which is bounded from zero uniformly.

Similarly, for any~$i\in\{1,\dots,n\}$,
\begin{equation}\label{TT2}
\begin{split}
& \int_{\R^n} z_{\mu,\xi}^{p-1}(x)
\,\frac{\partial z_{\mu,\xi}}{\partial\xi_i}(x)
\,\frac{\partial z_{\mu,\xi}}{\partial\mu}(x)\,dx = 0.
\end{split}\end{equation}
Finally, we observe that~$\bar z$ is positive and decreasing,
thus both~$\bar z$ and~$-\bar z'$ are positive: this says that the
right hand side of~\eqref{RHS99} is positive,
and indeed bounded from
zero uniformly. Hence we obtain that
\begin{equation}\label{TT3}
\int_{\R^n} z_{\mu,\xi}^{p-1}(x)\,
\left(\frac{\partial z_{\mu,\xi}}{\partial\mu}(x)\right)^2\,dx=c_2
\end{equation}
with~$c_2>0$ and bounded from zero uniformly.

Now, to make the notation
uniform, we take~$\zeta$, $\eta\in \{\xi_1,\dots,\xi_n,\mu\}$
and we consider the derivatives of~$z_{\mu,\xi}$
with respect to~$\zeta$ and~$\eta$.
Then we have that the quantity
$$\left\langle \frac{\partial z_{\mu,\xi}}{\partial\zeta},
\frac{\partial z_{\mu,\xi}}{\partial\eta}\right\rangle$$
is equal, up to dimensional constants, to
\begin{eqnarray*}
&&\int_{\R^n} (-\Delta)^{s/2}\frac{\partial z_{\mu,\xi}}{\partial\zeta}(x)
\, (-\Delta)^{s/2}\frac{\partial z_{\mu,\xi}}{\partial\eta}(x)\,dx \\
{\mbox{integrating by parts }}
&=& \int_{\R^n} (-\Delta)^{s}\frac{\partial z_{\mu,\xi}}{\partial\zeta}(x)
\,\frac{\partial z_{\mu,\xi}}{\partial\eta}(x)\,dx \\
{\mbox{exchanging the order of differentiation }}
&=& \int_{\R^n}
\frac{\partial }{\partial\zeta}
(-\Delta)^{s}z_{\mu,\xi}(x)
\,\frac{\partial z_{\mu,\xi}}{\partial\eta}(x)\,dx \\
{\mbox{using the equation }}&=& \int_{\R^n}
\frac{\partial }{\partial\zeta}
z_{\mu,\xi}^p(x)
\,\frac{\partial z_{\mu,\xi}}{\partial\eta}(x)\,dx \\
{\mbox{taking the derivative }}&=& p\int_{\R^n} z_{\mu,\xi}^{p-1}(x)
\frac{\partial z_{\mu,\xi} }{\partial\zeta}(x)
\,\frac{\partial z_{\mu,\xi}}{\partial\eta}(x)\,dx ,
\end{eqnarray*}
hence the desired result follows from~\eqref{TT1},
\eqref{TT2} and~\eqref{TT3}.
\end{proof}

\noindent Concerning the statement of Lemma~\ref{eigen},
we point out that the proof shows that~$\lambda_1=\dots=\lambda_n$
(while~$\lambda_{n+1}$ could be different), but in this paper we are not
taking advantage of this additional feature.

\subsection{Solving an auxiliary equation}
Keeping the notation introduced in the previous subsection,
the goal now is to solve an auxiliary equation
by means of the Implicit Function Theorem to obtain the following result.

\begin{lemma}\label{lemma0}
Let $z_{\mu,\xi}\in Z_0$. Then, for $\epsilon>0$ sufficiently small, there exists a unique
$w=w(\epsilon, z_{\mu,\xi})\in\left(T_{z_{\mu,\xi}}Z_0\right)^{\perp}$ such that
\begin{equation}\label{AUX}\begin{split}
&\iint_{\R^{2n}} \frac{\big( (z_{\mu,\xi}+w)(x)-(z_{\mu,\xi}+w)(y)\big)\,\big(
\varphi(x)-\varphi(y)\big)}{|x-y|^{n+2s}}\,dx\,dy
\\ &\qquad\,=\, \int_{\R^n} \Big( \epsilon h(x) \big( z_{\mu,\xi}(x)+w(x)\big)^q
+\big( z_{\mu,\xi}(x)+w(x)\big)^p \Big)
\,\varphi(x)\,dx,
\end{split}\end{equation}
for any $\varphi\in \left(T_{z_{\mu,\xi}}Z_0\right)^{\perp} \cap X^s$.

Moreover, the function $w$
is of class $C^1$ with respect to $\mu$ and $\xi$ and there exists a constant $C>0$ such that
\begin{equation}\label{est}
\|w\|_{X^s}\le C\,\epsilon, \ {\mbox{ and }} \
\lim_{\eps\to0}
\left\|\frac{\partial w}{\partial\mu}\right\|_{X^s}+
\left\|\frac{\partial w}{\partial\xi}\right\|_{X^s}=0.
\end{equation}
\end{lemma}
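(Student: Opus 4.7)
The plan is to apply the Banach-space Implicit Function Theorem to a fixed-point reformulation of \eqref{AUX}. Let $P$ denote the $\dot H^s$-orthogonal projection onto $(T_{z_{\mu,\xi}}Z_0)^{\perp}$, which is well-defined and bounded thanks to Lemma \ref{eigen} (the vectors $q_1,\dots,q_{n+1}$ are mutually orthogonal with eigenvalues $\lambda_i$ uniformly bounded away from zero on $Z_0$). Using the Riesz potential $J$ from Theorem \ref{THABC} and the identity $z_{\mu,\xi}=J(z_{\mu,\xi}^p)$ that follows from \eqref{EL-4}, the equation \eqref{AUX} for $w\in(T_{z_{\mu,\xi}}Z_0)^{\perp}\cap X^s$ rewrites as the fixed-point problem
\begin{equation*}
F(\epsilon,w):=w-P\,J\bigl[\epsilon\,h\,(z_{\mu,\xi}+w)_+^q+(z_{\mu,\xi}+w)_+^p-z_{\mu,\xi}^p\bigr]=0,
\end{equation*}
and $F(0,0)=0$ trivially.

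The first step is to show that $F$ is of class $C^1$ from a neighbourhood of the origin in $\R\times((T_{z_{\mu,\xi}}Z_0)^{\perp}\cap X^s)$ into $(T_{z_{\mu,\xi}}Z_0)^{\perp}\cap X^s$. The critical term $(z_{\mu,\xi}+w)_+^p$ is unproblematic because $z_{\mu,\xi}+w$ is bounded in $X^s$, and the estimates \eqref{EL-1}--\eqref{EL-3} show that $J$ maps $L^\beta(\R^n)\cap L^\infty(\R^n)$ continuously into $X^s$. The delicate term is the subcritical one when $q<1$, where $u\mapsto u^q$ fails to be smooth at $0$; however, restricting $\|w\|_{X^s}<a/2$ forces, via \eqref{PP8}, $z_{\mu,\xi}+w\ge a/2$ on $\omega=\mathrm{supp}\,h$. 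Since $h$ vanishes off $\omega$, the Nemytskii map $w\mapsto h\,(z_{\mu,\xi}+w)_+^q$ is smooth into the compactly-supported subspace of $L^\infty(\R^n)\cap L^\beta(\R^n)$, and composing with $PJ$ yields a $C^1$ map into $X^s$. This is precisely the reason for working in $X^s$ rather than in $\dot H^s$.

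The second and main step is the invertibility of $\partial_w F(0,0)\phi=\phi-PJ[p\,z_{\mu,\xi}^{p-1}\phi]$ on $(T_{z_{\mu,\xi}}Z_0)^{\perp}\cap X^s$. Since $z_{\mu,\xi}^{p-1}$ decays like $|x|^{-4s}$, multiplication by it, composed with $J$ and $P$, defines a compact operator $K$ on $X^s$ (combining Rellich-type compactness with the gain of regularity from Theorems \ref{THABC} and \ref{LinfResult}). By the Fredholm alternative it suffices to check $\ker(I-K)=\{0\}$. A kernel element $\phi$ satisfies $J(p\,z_{\mu,\xi}^{p-1}\phi)-\phi\in T_{z_{\mu,\xi}}Z_0$; applying $(-\Delta)^s$ via \eqref{EL-4} and recalling \eqref{EQqj} produces a relation of the form $(-\Delta)^s\phi-p\,z_{\mu,\xi}^{p-1}\phi=\sum_j c_j\,p\,z_{\mu,\xi}^{p-1}q_j$. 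Testing against each $q_i$, integrating by parts, and using the uniform orthogonality of Lemma \ref{eigen}, one forces every $c_j=0$, so $\phi$ is a bounded solution of the linearized equation. The nondegeneracy result of \cite{DDS} then gives $\phi\in T_{z_{\mu,\xi}}Z_0$, which together with $\phi\in(T_{z_{\mu,\xi}}Z_0)^{\perp}$ yields $\phi=0$.

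The Implicit Function Theorem now produces a unique $w=w(\epsilon,\mu,\xi)$ of class $C^1$ with $w(0,\mu,\xi)=0$; the joint application in the parameters is legitimate because $z_{\mu,\xi}$, the basis $q_1,\dots,q_{n+1}$, and hence $P$ depend smoothly on $(\mu,\xi)$ in the relevant topology. The estimate $\|w\|_{X^s}\le C\epsilon$ follows from Taylor-expanding $F$ at $\epsilon=0$, since $\epsilon$ enters linearly through the perturbation $\epsilon\,h\,(z_{\mu,\xi}+w)_+^q$. Differentiating the identity $F(\epsilon,w(\epsilon,\mu,\xi))=0$ with respect to $\mu$ or $\xi$ and evaluating at $\epsilon=0$ produces an equation for $\partial_\mu w$ and $\partial_\xi w$ whose right-hand side vanishes at $\epsilon=0$, yielding $\|\partial_\mu w\|_{X^s}+\|\partial_\xi w\|_{X^s}\to 0$. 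The principal obstacle throughout is the simultaneous control of the $\dot H^s$ seminorm and the $L^\infty$ norm required to accommodate the possibility $q<1$; this is exactly where the elliptic regularity of Section \ref{sec:est}, and in particular the $L^\infty$ bound \eqref{EL-3} for $J$, becomes indispensable.
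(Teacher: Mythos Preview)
Your overall strategy---reformulate \eqref{AUX} as a fixed-point equation, verify $C^1$ regularity using \eqref{PP8} to avoid the singularity of $t\mapsto t^q$, invert the linearization via a Fredholm argument and the nondegeneracy result of \cite{DDS}, then read off the estimates in \eqref{est} by differentiating the implicit relation---is exactly the paper's. The kernel analysis you sketch is correct and matches \eqref{spi}--\eqref{saddgf}.

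There are, however, two points where your argument diverges from the paper and the first is a genuine gap. You assert that $K\phi=PJ(p\,z_{\mu,\xi}^{p-1}\phi)$ is compact \emph{on $X^s$}, citing Theorems~\ref{THABC} and~\ref{LinfResult}. Neither result yields this: Theorem~\ref{THABC} gives boundedness of $J$ into $X^s$, and Theorem~\ref{LinfResult} is an a~priori $L^\infty$ bound, not a compactness statement. Compactness in the $L^\infty$ component would require an Arzel\`a--Ascoli argument (uniform equicontinuity plus uniform decay at infinity of $J(z_{\mu,\xi}^{p-1}v_k)$), which is plausible but not immediate and which you have not supplied. The paper avoids this entirely: Proposition~\ref{fredholm} establishes compactness of $K$ only in $\dot H^s(\R^n)$, where Rellich-type embeddings apply directly; invertibility of $\mathcal T$ is then proved first in $\dot H^s(\R^n)\times\R^{n+1}$ (Proposition~\ref{INV 1}), and lifted to $X^s\times\R^{n+1}$ by the dedicated regularity estimate of Lemma~\ref{qwrhgsa44LEMMA} (Proposition~\ref{invertibility}). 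This two-step route is the substantive content of the paper's Section~3.2, and your single-step claim skips it.

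A second, smaller difference: you work with the projection $P$ onto the $(\mu,\xi)$-dependent subspace $(T_{z_{\mu,\xi}}Z_0)^\perp\cap X^s$, so the domain of $F$ in the $w$-variable moves with the parameters; your remark that ``$P$ depends smoothly on $(\mu,\xi)$'' gestures at the fix but does not carry it out. The paper sidesteps this by introducing Lagrange multipliers $\alpha\in\R^{n+1}$ and encoding orthogonality through the second component $H_2$ of the map $H$ in \eqref{defH}, so that the Implicit Function Theorem is applied on the fixed product space $V\times\R^{n+1}$. This is a standard device, but it is what makes the $C^1$ dependence on $(\mu,\xi)$ clean.
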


Indeed, recalling the definition of $U$ given in \eqref{defU}, we can set for any $u\in U$
\begin{equation}\label{A agg}
A_\epsilon(u):=\epsilon\,h\,u^q +u^p.
\end{equation}
We observe that $u=J(A_\epsilon(u))$ (where $J$ has been introduced in \eqref{Jpsi})
implies that $u$ solves (up to an unessential renormalizing constant
that we neglect for simplicity, recall the footnote on page~\pageref{P5})
$$ (-\Delta)^s u=A_\epsilon(u) \ {\mbox{ in }}\R^n, $$
thanks to Theorem \ref{THABC} (see in particular \eqref{EL-4}).
Moreover, we have that
\begin{equation}\label{poupty}
\|J(A_\epsilon(u))\|_{L^{\CRES}(\R^n)}<+\infty.
\end{equation}
Indeed, by \eqref{EL-1} in Theorem \ref{THABC} we get that there exists $C>0$ such that
\begin{equation}\label{bjfkbgugbher}
\|J(A_\epsilon(u))\|_{L^{\CRES}(\R^n)}\le C\|A_\epsilon(u)\|_{L^{\beta}(\R^n)},
\end{equation}
where $\beta=2n/(n+2s)$. Now, since $u\in L^{\CRES}(\R^n)$ and $p=(n+2s)/(n-2s)$,
we have that $u^p\in L^\beta(\R^n)$. This and the fact that $h$ is compactly supported
imply that $\|A_\epsilon(u)\|_{L^{\beta}(\R^n)}<+\infty$.
Therefore, from \eqref{bjfkbgugbher} we deduce \eqref{poupty}.

Analogously, making use of \eqref{EL-2} and \eqref{EL-3},
one sees that
$$ [J(A_\epsilon(u))]_{\dot H^s(\R^n)}+\|J(A_\epsilon(u))\|_{L^{\infty}(\R^n)}
<+\infty.$$
Hence, using Theorem~\ref{THABC}, we have that
if~$u\in U$ then~$J(A_\epsilon(u))\in X^s$.

Now, we use the notation $U\ni u=z_{\mu,\xi}+w$, with $
z_{\mu,\xi}\in Z_0$ and $w\in V$, and we
recall that we are identifying the manifold $Z_0$ defined in \eqref{critman}
with $(\mu_1,\mu_2)\times B_R\subset\R^{n+1}$. We define
\begin{equation}\label{defH}
H:(\mu_1,\mu_2)\times B_R\times V\times\R\times\R^{n+1}\rightarrow
X^s\times\R^{n+1}
\end{equation}
as $H=(H_1,H_2)$, with components
\begin{eqnarray*}
H_1(\mu,\xi,w,\epsilon,\alpha) &:=& z_{\mu,\xi}+w-J(A_\epsilon(z_{\mu,\xi}+w))
-\sum_{i=1}^{n+1}\alpha_i\,q_i, \\
H_2(\mu,\xi,w,\epsilon,\alpha) &:=& \left(\langle w,q_1\rangle,\ldots,\langle
w,q_{n+1}\rangle\right),
\end{eqnarray*}
where $q_i$ was defined in \eqref{qj}.

Our goal is to find $w=w(\epsilon, z_{\mu,\xi})$ (that we
also think as $w=w(\epsilon, \mu,\xi)$ with a slight abuse of notation) that
solves the equation $H(\mu,\xi,w,\epsilon,\alpha)=0$, that is
the system of equations
\begin{equation}\label{f6v7gb8idsfvvcwqw}
H_1(\mu,\xi,w,\epsilon,\alpha)=0=H_2(\mu,\xi,w,\epsilon,\alpha)
.\end{equation}
We notice that if $w$ satisfies \eqref{f6v7gb8idsfvvcwqw}
then~$w\in\left(T_{z_{\mu,\xi}}Z_0\right)^{\perp}$ and $z_{\mu,\xi}+w$ is a solution
of the auxiliary equation \eqref{AUX}. Indeed, $H_2(\mu,\xi,w,\epsilon,w)=0$ implies that
$$ \langle w,q_i\rangle=0 \ {\mbox{ for any }}i=1,\ldots,n+1,$$
which means that $w\in\left(T_{z_{\mu,\xi}}Z_0\right)^{\perp}$.
Moreover, $H_1(\mu,\xi,w,\epsilon,\alpha)=0$ gives that
$z_{\mu,\xi}+w-J(A_\epsilon(z_{\mu,\xi}+w))\in T_{z_{\mu,\xi}}Z_0$, and so
$$ \langle z_{\mu,\xi}+w-J(A_\epsilon(z_{\mu,\xi}+w)), \varphi\rangle =0$$
for any $\varphi\in\left(T_{z_{\mu,\xi}}Z_0\right)^{\perp}\cap X^s$. That is
\begin{equation}\begin{split}\label{jkwgoprt43th}
&\iint_{\R^{2n}}
\frac{\big( (z_{\mu,\xi}+w)(x)-(z_{\mu,\xi}+w)(y)\big)
\,\big( \varphi(x)-\varphi(y)\big)}{|x-y|^{n+2s}}
\,dx\,dy\\
&\qquad=\,
\iint_{\R^{2n}}
\frac{\big( J(A_\epsilon(z_{\mu,\xi}+w)) (x)-J(A_\epsilon(z_{\mu,\xi}+w))(y)\big)
\,\big( \varphi(x)-\varphi(y)\big)}{|x-y|^{n+2s}}
\,dx\,dy
\\&\qquad=\, \int_{\R^n} A_\epsilon(z_{\mu,\xi}+w)(x)\,\varphi(x)\,dx,
\end{split}\end{equation}
for any $\varphi\in\left(T_{z_{\mu,\xi}}Z_0\right)^{\perp}\cap X^s$,
thanks to \eqref{EL-4} in Theorem \ref{THABC}, which is \eqref{AUX}.

Therefore, to prove Lemma \ref{lemma0}, the strategy will be to apply the
Implicit Function Theorem to find a solution of the auxiliary
equation $H(\mu,\xi,w,\epsilon,\alpha)=0$.
Since we are working in the space~$X^s$,
it is not obvious that $H$ satisfies the hypotheses needed
to apply this theorem.
Indeed, the proofs of these requirements are very technically involved,
so we devote the next two subsections to study in detail the behavior of the operator $H$.

\subsubsection{Preliminary results on $H$}

Consider the operator defined in \eqref{defH}. First of all, we prove some continuity property.

\begin{lemma}\label{C1w}
$H$ is $C^1$ with respect to $w$.
\end{lemma}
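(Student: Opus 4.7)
The plan is to split $H=(H_1,H_2)$ and handle the two parts separately. The second component $H_2$ is a vector of inner products $\langle w,q_i\rangle$, so it is a bounded linear function of $w$, and is therefore $C^1$ at once, with derivative $v\mapsto(\langle v,q_1\rangle,\dots,\langle v,q_{n+1}\rangle)$. All the substance lies in $H_1$: stripping off the pieces that are affine in $w$ or do not depend on $w$ (namely $z_{\mu,\xi}+w-\sum_{i=1}^{n+1}\alpha_iq_i$), the remaining map is $w\mapsto J\bigl(A_\epsilon(z_{\mu,\xi}+w)\bigr)$, and the task reduces to showing that this superposition is $C^1$ from $V\subset X^s$ into $X^s$.

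By Theorem \ref{THABC} (combining \eqref{EL-1}, \eqref{EL-2} and \eqref{EL-3}), $J$ is a bounded linear operator from $L^\beta(\R^n)\cap L^\infty(\R^n)$ into $X^s$, hence trivially $C^1$. Via the chain rule, it therefore suffices to prove that the Nemitski operator
$$\mathcal A_\epsilon(w):=A_\epsilon(z_{\mu,\xi}+w)=\epsilon\,h\,(z_{\mu,\xi}+w)^q+(z_{\mu,\xi}+w)^p$$
is of class $C^1$ from $V$ into $L^\beta(\R^n)\cap L^\infty(\R^n)$, with candidate Fr\'echet derivative $\varphi\mapsto\bigl(\epsilon\,q\,h\,(z_{\mu,\xi}+w)^{q-1}+p\,(z_{\mu,\xi}+w)^{p-1}\bigr)\varphi$.

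The two summands are then treated separately. For the critical piece $u\mapsto u_+^p$ with $p=(n+2s)/(n-2s)>1$, a first-order Taylor expansion combined with H\"older's inequality in the pair $(\CRES/p,\CRES)$, the Sobolev embedding $X^s\hookrightarrow L^{\CRES}(\R^n)$ from Lemma \ref{L SOB}, and the $L^\infty$ control built into the $X^s$ norm yields both Fr\'echet differentiability into $L^\beta\cap L^\infty$ and continuity of the derivative in the operator norm (by dominated convergence along $X^s$-convergent sequences). For the subcritical, possibly sublinear term $\epsilon\,h\,(z_{\mu,\xi}+w)^q$, the factor $h$ is supported in the compact set $\omega$, and by \eqref{PP8} every $w\in V$ satisfies $(z_{\mu,\xi}+w)(x)\geq a/2>0$ for each $x\in\omega$. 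Hence on $\omega$ the argument of $r\mapsto r^q$ lies in a fixed compact subset of $(0,\infty)$ on which this function is smooth, giving uniform $L^\infty(\omega)$ bounds on $h\,(z_{\mu,\xi}+w)^{q-1}$ and, via the compactness of $\omega$, the required Fr\'echet differentiability together with operator-norm continuity of the derivative.

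The main obstacle is precisely the sublinear regime $q<1$, where $r\mapsto r^q$ fails to be $C^1$ at the origin and its derivative blows up. The whole setup is engineered to bypass this: working in the pointwise space $X^s$ rather than in $\dot H^s(\R^n)$, and imposing $\|w\|_{X^s}<a/2$ in the definition of $V$, jointly enforce the uniform positivity \eqref{PP8} on the support of $h$, and thereby sidestep the singularity of the power at zero.
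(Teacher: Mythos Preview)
Your proposal is correct and follows essentially the same route as the paper: treat $H_2$ as linear, strip the affine part of $H_1$, and reduce to the Nemitski map $w\mapsto A_\epsilon(z_{\mu,\xi}+w)$ using the boundedness of $J:L^\beta\cap L^\infty\to X^s$ from Theorem~\ref{THABC}, handling the $h$-term via the uniform lower bound~\eqref{PP8} on~$\omega$ and the critical power via H\"older and Sobolev. The only notable difference is that where you appeal to dominated convergence for the operator-norm continuity of the derivative, the paper carries out explicit quantitative estimates (splitting into the cases $p\le 2$ and $p>2$ and using, respectively, the H\"older continuity or the Lipschitz behaviour of $r\mapsto r^{p-1}$); your factoring through the chain rule is a clean reorganisation of the same computation.
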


\begin{proof}
We first notice that $H_2$ depends linearly on $w$,
and so it is $C^1$. Now we prove that $H_1$ is continuous in $X^s$.
Indeed, for any $w_1,w_2\in V$ we have that
$$ H_1(\mu,\xi,w_1,\epsilon,\alpha)-H_1(\mu,\xi,w_2,\epsilon,\alpha) =
w_1-w_2 - J(A_\epsilon(z_{\mu,\xi}+w_1))+ J(A_\epsilon(z_{\mu,\xi}+w_1)),$$
and therefore
\begin{equation}\begin{split}\label{beta73}
&\|H_1(\mu,\xi,w_1,\epsilon,\alpha)-H_1(\mu,\xi,w_2,\epsilon,\alpha)\|_{X^s} \\&\qquad \le
\|w_1-w_2\|_{X^s} +\|J(A_\epsilon(z_{\mu,\xi}+w_1))- J(A_\epsilon(z_{\mu,\xi}+w_2))\|_{X^s}.
\end{split}\end{equation}
By \eqref{EL-2} and \eqref{EL-3} of Theorem \ref{THABC}
and the fact that~$J$ is linear we deduce that
\begin{equation}\begin{split}\label{beta72}
&\|J(A_\epsilon(z_{\mu,\xi}+w_1))- J(A_\epsilon(z_{\mu,\xi}+w_2))\|_{X^s} \\& \le
C\left(\|A_\epsilon(z_{\mu,\xi}+w_1)- A_\epsilon(z_{\mu,\xi}+w_2)\|_{L^\infty(\R^n)}
+ \|A_\epsilon(z_{\mu,\xi}+w_1)- A_\epsilon(z_{\mu,\xi}+w_2)\|_{L^\beta(\R^n)}\right),
\end{split}\end{equation}
where $\beta=2n/(n+2s)$. Now from~\eqref{A agg} we deduce that
\begin{eqnarray*}
&& A_\epsilon(z_{\mu,\xi}+w_1)- A_\epsilon(z_{\mu,\xi}+w_2) \\
&=& \epsilon\,h\,\left[(z_{\mu,\xi}+w_1)^q-(z_{\mu,\xi}+w_2)^q \right]
+ (z_{\mu,\xi}+w_1)^p -(z_{\mu,\xi}+w_2)^p \\
&=& \epsilon q \,h\, (z_{\mu,\xi}+\tilde{w})^{q-1}(w_1-w_2) + p(z_{\mu,\xi}+\tilde{w})^{p-1}(w_1-w_2),
\end{eqnarray*}
for some $\tilde{w}$ on the segment joining $w_1$ and $w_2$
(in particular $\tilde{w}\in L^{\CRES}(\R^n)$ and $z_{\mu,\xi}+\tilde{w}$
satisfies \eqref{PP8}). Consequently,
\begin{equation}\label{beta71}
\|A_\epsilon(z_{\mu,\xi}+w_1)- A_\epsilon(z_{\mu,\xi}+w_2)\|_{L^\infty(\R^n)}
\le C\,\|w_1-w_2\|_{L^\infty(\R^n)}.
\end{equation}
Moreover, since $h$ has compact support, we have that
\begin{equation}\label{beta70}
\|\epsilon\,h\, (z_{\mu,\xi}+\tilde{w})^{q-1}(w_1-w_2) \|_{L^\beta(\R^n)}
\le C\,\|w_1-w_2\|_{L^\infty(\R^n)}.
\end{equation}
Finally, using H\"older inequality with exponent $\CRES/\beta=(n+2s)/(n-2s)$
and $\delta:=(n+2s)/4s$, we get
\begin{eqnarray*}
&&\|(z_{\mu,\xi}+\tilde{w})^{p-1}(w_1-w_2)\|_{L^\beta(\R^n)}^\beta \\
&&\qquad = \int_{\R^n}(z_{\mu,\xi}+\tilde{w})^{(p-1)\beta}(w_1-w_2)^\beta \\
&&\qquad \le \left(\int_{\R^n}(z_{\mu,\xi}+\tilde{w})^{(p-1)\beta\delta}\right)^{1/\delta}
\left(\int_{\R^n}(w_1-w_2)^{\CRES}\right)^{\beta/\CRES}\\
&&\qquad = \left(\int_{\R^n}(z_{\mu,\xi}+\tilde{w})^{\CRES}\right)^{1/\delta}
\left(\int_{\R^n}(w_1-w_2)^{\CRES}\right)^{\beta/\CRES} \\
&&\qquad \le C\, \|w_1-w_2\|_{L^{\CRES}(\R^n)}^\beta \\
&&\qquad \le C\, [w_1-w_2]_{\dot H^s(\R^n)}^\beta,
\end{eqnarray*}
up to renaming $C>0$, where we have used
Lemma \ref{L SOB}
in the last line.
Using this, \eqref{beta71} and \eqref{beta70}
into \eqref{beta72} we obtain that
$$ \|J(A_\epsilon(z_{\mu,\xi}+w_1))- J(A_\epsilon(z_{\mu,\xi}+w_2))\|_{X^s}\le C\,
\|w_1-w_2\|_{X^s}, $$
which together with \eqref{beta73} imply that
$$ \|H_1(\mu,\xi,w_1,\epsilon,\alpha)-H_1(\mu,\xi,w_2,\epsilon,\alpha)\|_{X^s} \le C\,
\|w_1-w_2\|_{X^s}, $$
up to renaming $C$. This shows the continuity of $H_1$ in $X^s$
with respect to $w$.

Now, in order to prove that $H_1$ is $C^1$, we observe that
\begin{equation}\begin{split}\label{der A}
\frac{\partial H_1}{\partial w}[v]=\,& v-J(A'_\epsilon(z_{\mu,\xi}+w)v)\\ =\,&
v- J\left(q\epsilon\,h\,(z_{\mu,\xi}+w)^{q-1}v+p(z_{\mu,\xi}+w)^{p-1}v\right).
\end{split}\end{equation}
To see this, we take $v\in V$ and $|t|<1$ and we compute
\begin{eqnarray*}
&& A_\epsilon(z_{\mu,\xi}+w+tv)-A_\epsilon(z_{\mu,\xi}+w)
\\&=& \epsilon\,h\,\left[(z_{\mu,\xi}+w+tv)^q-(z_{\mu,\xi}+w)^q\right]
+(z_{\mu,\xi}+w+tv)^p -(z_{\mu,\xi}+w)^p \\
&=& q\epsilon\,h\,(z_{\mu,\xi}+w)^{q-1}tv+p(z_{\mu,\xi}+w)^{p-1}tv +O(t^2),
\end{eqnarray*}
and so
$$ \lim_{t\to0}\frac{A_\epsilon(z_{\mu,\xi}+w+tv)-A_\epsilon(z_{\mu,\xi}+w)}{t} =
q\epsilon\,h\,(z_{\mu,\xi}+w)^{q-1}v+p(z_{\mu,\xi}+w)^{p-1}v. $$
From this and the fact that $J$ is linear we get that
\begin{eqnarray*}
\frac{\partial H_1}{\partial w}[v] &=&
\lim_{t\to0}\frac{1}{t}\left[tv +J\left(A_\epsilon(z_{\mu,\xi}+w+tv)-A_\epsilon(z_{\mu,\xi}+w)\right)
\right]\\
&=& v- J\left(q\epsilon\,h\,(z_{\mu,\xi}+w)^{q-1}v+p(z_{\mu,\xi}+w)^{p-1}v\right),
\end{eqnarray*}
which is \eqref{der A}. From \eqref{der A} we obtain that, for any $w_1,w_2\in V$,
\begin{equation}\begin{split}\label{uffa44}
& \left\|\frac{\partial H_1}{\partial w}(\mu,\xi,w_1,\epsilon,\alpha)
-\frac{\partial H_1}{\partial w}(\mu,\xi,w_2,\epsilon,\alpha)\right\|_{\mathcal{L}((X^s)^*,X^s)} \\
&\qquad = \sup_{\|v\|_{X^s}=1}
\left\|J(A'_\epsilon(z_{\mu,\xi}+w_1)v)-J(A'_\epsilon(z_{\mu,\xi}+w_2)v)\right\|_{X^s}.
\end{split}\end{equation}
Since $J$ is linear, by \eqref{EL-2} and \eqref{EL-3} in Theorem \ref{THABC}
we obtain that
\begin{equation}\begin{split}\label{uffa43}
& \left\|J(A'_\epsilon(z_{\mu,\xi}+w_1)v)-J(A'_\epsilon(z_{\mu,\xi}+w_2)v)\right\|_{X^s} \\
&\le C\left(\|A'_\epsilon(z_{\mu,\xi}+w_1)v-A'_\epsilon(z_{\mu,\xi}+w_2)v\|_{L^\infty(\R^n)}
+\|A'_\epsilon(z_{\mu,\xi}+w_1)v-A'_\epsilon(z_{\mu,\xi}+w_2)v\|_{L^\beta(\R^n)}\right),
\end{split}\end{equation}
where $\beta=2n/(n+2s)$. 

We have that
\begin{equation}\label{AONALLA}
\begin{split}
& A'_\epsilon(z_{\mu,\xi}+w_1)v-A'_\epsilon(z_{\mu,\xi}+w_2)v \\
&\qquad = q\,\epsilon\,h\,v\left[(z_{\mu,\xi}+w_1)^{q-1}-
(z_{\mu,\xi}+w_2)^{q-1}\right]
+ p\,v\,\left[(z_{\mu,\xi}+w_1)^{p-1}-(z_{\mu,\xi}+w_2)^{p-1} \right].
\end{split}\end{equation}
Now we distinguish two cases, either $p\le2$ or $p>2$.
If $p\le2$, we use \eqref{AONALLA} and we obtain that
\begin{equation}\begin{split}\label{uffa41}
& |A'_\epsilon(z_{\mu,\xi}+w_1)v-A'_\epsilon(z_{\mu,\xi}+w_2)v| \\
& \qquad \le
q|q-1|\,\epsilon\,|h|\,|v|\,|z_{\mu,\xi}+\tilde{w}|^{q-2}|w_1-w_2| + C\, |w_1-w_2|^{p-1}|v|,
\end{split}\end{equation}
for some $\tilde{w}$ on the segment joining $w_1$ and $w_2$.
Accordingly,
\begin{equation}\begin{split}\label{uffa42}
& \|A'_\epsilon(z_{\mu,\xi}+w_1)v-A'_\epsilon(z_{\mu,\xi}+w_2)v\|_{L^\infty(\R^n)}
\\&\qquad \le C\, \left(\|w_1-w_2\|_{L^\infty(\R^n)}+\|w_1-w_2\|_{L^\infty(\R^n)}^{p-1}\right),
\end{split}\end{equation}
since $z_{\mu,\xi}+\tilde{w}$ satisfies \eqref{PP8}.
Concerning the estimate for the $L^\beta$-norm, we observe that, since $h$
is compactly supported and $v\in L^\beta_{loc}(\R^n)$, we have
\begin{equation}\label{uffa40}
\|q|q-1|\,\epsilon\,|h|\,|v|\,|z_{\mu,\xi}+\tilde{w}|^{q-2}|w_1-w_2|\|_{L^\beta(\R^n)}
\le C\, \|w_1-w_2\|_{L^\infty(\R^n)}.
\end{equation}
Moreover, applying the
H\"older inequality with exponents $\frac{\CRES}{(p-1)\beta}=\frac{n+2s}{4s}$
and $p$ we obtain that
\begin{eqnarray*}
\||w_1-w_2|^{p-1}|v|\|_{L^\beta(\R^n)}^\beta &=&\int_{\R^n}|w_1-w_2|^{(p-1)\beta}|v|^\beta \\
& \le & \left(\int_{\R^n}|w_1-w_2|^{\CRES}\right)^{4s/(n+2s)}
\left(\int_{\R^n}|v|^{p\beta}\right)^{1/p} \\
& =&  \|w_1-w_2\|_{L^{\CRES}(\R^n)}^{8ns/[(n+2s)(n-2s)]}
\left(\int_{\R^n}|v|^{\CRES}\right)^{1/p} \\
&\le & C\, \|w_1-w_2\|_{L^{\CRES}(\R^n)}^{8ns/[(n+2s)(n-2s)]},
\end{eqnarray*}
for a suitable positive constant $C$. Hence, by
Lemma \ref{L SOB},
we have that
$$ \||w_1-w_2|^{p-1}|v|\|_{L^\beta(\R^n)}\le C\, \|w_1-w_2\|_{L^{\CRES}(\R^n)}^{4s/(n-2s)}
\le C\, [w_1-w_2]_{\dot H^s(\R^n)}^{4s/(n-2s)}, $$
up to relabelling $C$. This, \eqref{uffa40} and \eqref{uffa41} imply that
$$ \|A'_\epsilon(z_{\mu,\xi}+w_1)v-A'_\epsilon(z_{\mu,\xi}+w_2)v\|_{L^\beta(\R^n)}\le
C\, \Big( \|w_1-w_2\|_{X^s} + \|w_1-w_2\|_{X^s}^{4s/(n-2s)} \Big).$$
Putting together this, \eqref{uffa42}, \eqref{uffa43} and \eqref{uffa44},
we obtain that $\frac{\partial H_1}{\partial w}$ is continuous
with respect to $w$ in $X^s$. This implies that $H_1$ is $C^1$ with respect to $w$,
and concludes the proof when $p\le2$.

If instead $p\ge2$, we deduce from \eqref{AONALLA} that
\begin{eqnarray*}
&& |A'_\epsilon(z_{\mu,\xi}+w_1)v-A'_\epsilon(z_{\mu,\xi}+w_2)v| \\
&& \qquad \le
q|q-1|\,\epsilon\,|h|\,|v|\,|z_{\mu,\xi}+\tilde{w}|^{q-2}|w_1-w_2| + C\,
|v|\,(z_{\mu,\xi}+|w_1|+|w_2|)^{p-2}\,|w_1-w_2|.
\end{eqnarray*}
The first term in the right hand side of this inequality can be treated
as in the case $p\le2$, so we focus on the latter term. To this aim,
we first bound $|v|\,(z_{\mu,\xi}+|w_1|+|w_2|)^{p-2}\,|w_1-w_2|$
in $L^\infty(\R^n)$ by $C\,\|w_1-w_2\|_{L^\infty(\R^n)}$,
which assures the desired bound in $L^\infty(\R^n)$.
Hence, we are left with the estimate for the norm in $L^\beta(\R^n)$ of this term.
For this, we use the H\"older
inequality with exponents~$\frac{2^*_s \,p}{2^*_s \,(p-1)-\beta p}=\frac{p}{p-2}$,
$\frac{2^*_s}{\beta}=p$ and $\frac{2^*_s}{\beta}=p$,
and we find that
\begin{eqnarray*}
&& \big\| (z_{\mu,\xi}+|w_1|+|w_2|)^{p-2}\,|w_1-w_2|\,|v|\,
\big\|_{L^\beta(\R^n)}^\beta \\ &&\qquad\le
\| z_{\mu,\xi}+|w_1|+|w_2| \,\big\|_{L^{2^*_s}(\R^n)}^{ \frac{2^*_s \,(p-1)-\beta p}{p} }
\, \|w_1-w_2\|_{L^{2^*_s}(\R^n)}^{\beta}\,
\|v\|_{L^{2^*_s}(\R^n)}^{\beta}\\ &&\qquad\le C\,
\|w_1-w_2\|_{L^{2^*_s}(\R^n)}^{ \beta}.\end{eqnarray*}
As in the case $p\le2$, this estimate
implies that $H_1$ is $C^1$ with respect to $w\in X^s$
and so it concludes the proof also when $p>2$.
\end{proof}

Let us study now some properties of the derivative of $H$.
In particular, consider first the operator
\begin{equation}\label{DE T}
Tv:=\frac{\partial H_1}{\partial w}(\mu,\xi,0,0,0)[v]=
v-J(A_0'(z_{\mu,\xi})v).\end{equation}
This definition is well posed, as next result points out:

\begin{lemma}\label{po0}
$T$ is a bounded operator from~$\dot H^s(\R^n)$ to~$\dot H^s(\R^n)$.
\end{lemma}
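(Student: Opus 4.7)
The plan is to unpack the definition of $T$ and estimate each piece using the Riesz potential estimates of Theorem \ref{THABC} together with the critical Sobolev embedding of Lemma \ref{L SOB}. Since $A_0(u) = u^p$ (the $\epsilon = 0$ case of \eqref{A agg}), we have $A_0'(z_{\mu,\xi}) v = p\, z_{\mu,\xi}^{p-1}\, v$, so that
\[
Tv = v - J\bigl(p\, z_{\mu,\xi}^{p-1}\, v\bigr).
\]
The identity part is trivially bounded in $\dot H^s(\R^n)$, so the task reduces to bounding $\|J(p\, z_{\mu,\xi}^{p-1}\, v)\|_{\dot H^s(\R^n)}$ by a multiple of $[v]_{\dot H^s(\R^n)}$.

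First I would invoke \eqref{EL-2} of Theorem \ref{THABC}, which gives the inequality $[J\psi]_{\dot H^s(\R^n)} \le C\,\|\psi\|_{L^\beta(\R^n)}$ with $\beta = 2n/(n+2s)$, provided $\psi \in L^\beta(\R^n)$. Thus it suffices to estimate $\|z_{\mu,\xi}^{p-1}\, v\|_{L^\beta(\R^n)}$. The key observation is that, since $p - 1 = 4s/(n-2s)$, one has $\CRES/(p-1) = n/(2s)$, so that $z_{\mu,\xi}^{p-1} \in L^{n/(2s)}(\R^n)$ with a norm that is uniformly bounded as $z_{\mu,\xi}$ varies in the compact family $Z_0$ (indeed, $\|z_{\mu,\xi}^{p-1}\|_{L^{n/(2s)}(\R^n)} = \|z_{\mu,\xi}\|_{L^{\CRES}(\R^n)}^{p-1}$, and this is independent of $(\mu,\xi)$ by the scaling invariance of $\CRES$).

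Next I would apply the H\"older inequality with exponents $n/(2s)$ and $\CRES$, whose reciprocals sum to $2s/n + (n-2s)/(2n) = (n+2s)/(2n) = 1/\beta$, to deduce
\[
\|z_{\mu,\xi}^{p-1}\, v\|_{L^\beta(\R^n)} \le \|z_{\mu,\xi}^{p-1}\|_{L^{n/(2s)}(\R^n)}\,\|v\|_{L^{\CRES}(\R^n)} \le C\,\|v\|_{L^{\CRES}(\R^n)}.
\]
An application of Lemma \ref{L SOB} then converts this to $C\,[v]_{\dot H^s(\R^n)}$, and combining the two inequalities yields
\[
[Tv]_{\dot H^s(\R^n)} \le [v]_{\dot H^s(\R^n)} + C\,[v]_{\dot H^s(\R^n)},
\]
which, in view of the equivalence of the norm and the seminorm on $\dot H^s(\R^n)$ (noted after the definition of $\dot H^s(\R^n)$), establishes the boundedness claim.

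No real obstacle is expected here; the only point demanding care is the matching of the critical exponents in the H\"older step, which works precisely because $(p-1)\cdot n/(2s) = \CRES$ is the scaling-critical identity behind the whole problem. Everything else is a direct application of the Riesz-potential estimates already proved in Theorem \ref{THABC} and the Sobolev embedding of Lemma \ref{L SOB}.
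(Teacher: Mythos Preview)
Your proof is correct and follows essentially the same approach as the paper: both reduce to bounding $[J(p\,z_{\mu,\xi}^{p-1}v)]_{\dot H^s(\R^n)}$ via \eqref{EL-2}, then estimate $\|z_{\mu,\xi}^{p-1}v\|_{L^\beta(\R^n)}$ by H\"older and finish with the Sobolev inequality. The only cosmetic difference is that the paper applies H\"older to the $\beta$-th powers with exponents $\CRES/\beta$ and $(n+2s)/4s$, whereas you use the equivalent form $\|fg\|_{L^\beta}\le\|f\|_{L^{n/(2s)}}\|g\|_{L^{\CRES}}$ directly.
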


\begin{proof} Let~$\psi:=A'_0(z_{\mu,\xi}) v=p z_{\mu,\xi}^{p-1} v$.
{F}rom~\eqref{EL-2}, we know that
$$ [J(A'_0(z_{\mu,\xi}) v)]_{\dot{H}^s(\R^n)}
=[J\psi]_{\dot{H}^s(\R^n)} \le C\,\|\psi\|_{L^\beta(\R^n)}
= Cp\,\|z_{\mu,\xi}^{p-1} v\|_{L^\beta(\R^n)},$$
with~$\beta=2n/(n+2s)$.
On the other hand, using the H\"older inequality with exponents~$\CRES/\beta$
and~$(n+2s)/4s$ we can bound the quantity~$\|z_{\mu,\xi}^{p-1} v\|_{L^\beta(\R^n)}$
with~$C\,\|v\|_{L^{\CRES}(\R^n)}$ and thus by~$C\,[v]_{\dot{H}^s(\R^n)}$, thanks to
the Sobolev inequality. This gives that
$$ [J(A'_0(z_{\mu,\xi}) v)]_{\dot{H}^s(\R^n)}
\le C\,[v]_{\dot{H}^s(\R^n)},$$
which implies the desired result.
\end{proof}

It is important to remark that~$T$ is also a linear operator over~$X^s$.
Of course, since~$X^s$ is a subset of~$\dot H^s(\R^n)$, the restriction operator, that
we still denote by~$T$, maps~$X^s$ continuously to~$\dot H^s(\R^n)$.
What is relevant for us is that it also maps~$X^s$ continuously to~$X^s$,
as next result explicitly states:

\begin{lemma}\label{po00}
$T$ is a bounded operator from~$X^s$ to~$X^s$.
\end{lemma}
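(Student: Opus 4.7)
The plan is to establish the two components of the $X^s$ norm separately, namely the $\dot H^s$ seminorm and the $L^\infty$ norm of $Tv$, and in both cases reduce the estimate to Theorem \ref{THABC} applied to the function $\psi := A_0'(z_{\mu,\xi})v = p\,z_{\mu,\xi}^{p-1}\,v$.

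First I would observe that the $\dot H^s(\R^n)$ bound on $Tv$ is essentially free: by Lemma \ref{po0} we already know $[Tv]_{\dot H^s(\R^n)} \le C\,[v]_{\dot H^s(\R^n)} \le C\,\|v\|_{X^s}$. So the actual content of the lemma is the $L^\infty$ bound on $Tv$, for which I would write
\[
\|Tv\|_{L^\infty(\R^n)} \le \|v\|_{L^\infty(\R^n)} + \|J(p\,z_{\mu,\xi}^{p-1}\,v)\|_{L^\infty(\R^n)},
\]
where the first summand is trivially controlled by $\|v\|_{X^s}$.

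To estimate the second summand I would invoke the second part of \eqref{EL-3} in Theorem \ref{THABC}, which gives
\[
\|J\psi\|_{L^\infty(\R^n)} \le C\bigl(\|\psi\|_{L^\infty(\R^n)} + \|\psi\|_{L^\beta(\R^n)}\bigr)
\]
with $\psi = p\,z_{\mu,\xi}^{p-1}\,v$ and $\beta = 2n/(n+2s)$. Since $z_{\mu,\xi}$ is bounded uniformly on $Z_0$, one has $\|\psi\|_{L^\infty(\R^n)} \le C\,\|v\|_{L^\infty(\R^n)} \le C\,\|v\|_{X^s}$. For the $L^\beta$ piece I would mimic the Hölder step used in the proof of Lemma \ref{po0}: by Hölder with exponents $\CRES/\beta$ and $(n+2s)/(4s)$,
\[
\|z_{\mu,\xi}^{p-1}\,v\|_{L^\beta(\R^n)} \le \|z_{\mu,\xi}^{p-1}\|_{L^{n/(2s)}(\R^n)}\,\|v\|_{L^{\CRES}(\R^n)} \le C\,[v]_{\dot H^s(\R^n)} \le C\,\|v\|_{X^s},
\]
where in the last step I use Lemma \ref{L SOB} and the fact that $(p-1)\cdot n/(2s) = \CRES$, so $z_{\mu,\xi}^{p-1} \in L^{n/(2s)}(\R^n)$ uniformly on $Z_0$.

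Combining these pieces yields $\|Tv\|_{L^\infty(\R^n)} \le C\,\|v\|_{X^s}$, and together with the $\dot H^s$ bound this gives $\|Tv\|_{X^s} \le C\,\|v\|_{X^s}$, which is the claim. I do not anticipate any real obstacle: the only delicate issue is making sure we may apply \eqref{EL-3} to $\psi$, i.e. that $\psi \in L^{n/(2s) + \delta_o}(B_1) \cap L^\beta(\R^n)$, and this is automatic since $v \in L^\infty(\R^n)$ gives $\psi$ locally bounded, while the $L^\beta$ control comes from the Hölder argument above (so in fact one could also invoke the $L^\infty$ form of \eqref{EL-3} directly).
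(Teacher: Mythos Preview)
Your proof is correct and takes essentially the same approach as the paper, which simply says ``Same as the one of Lemma~\ref{po0}, using~\eqref{EL-3} in addition to~\eqref{EL-2}.'' You have carefully written out what that one-line proof means: the $\dot H^s$ bound comes from Lemma~\ref{po0}, and the $L^\infty$ bound comes from~\eqref{EL-3} applied to $\psi = p\,z_{\mu,\xi}^{p-1}v$, with the $L^\beta$ control of $\psi$ obtained by the same H\"older argument as in Lemma~\ref{po0}.
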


\begin{proof} Same as the one of Lemma~\ref{po0},
using~\eqref{EL-3}
in addition to~\eqref{EL-2}.
\end{proof}

As a matter of fact, $T$ enjoys further compactness properties, as observed
in the next result:

\begin{prop}\label{fredholm}
$T$ is a Fredholm operator over~$\dot{H}^s(\R^n)$. More explicitly,
if we set~${K}v:=-J(A_0'(z_{\mu,\xi})v)$,
we have that~$T=Id_{\dot{H}^s(\R^n)}+{K}$, and
${K}:\dot{H}^s(\mathbb{R}^n)\rightarrow\dot{H}^s(\mathbb{R}^n)$ is a compact operator
over~$\dot{H}^s(\R^n)$.
\end{prop}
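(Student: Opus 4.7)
Writing $T=\mathrm{Id}_{\dot H^s(\R^n)}+K$ is immediate from \eqref{DE T}, so by the classical Riesz--Schauder alternative the Fredholm property of $T$ will follow once I show that $K:\dot H^s(\R^n)\to\dot H^s(\R^n)$ is a compact operator. The strategy is therefore to take an arbitrary bounded sequence $\{v_k\}\subset\dot H^s(\R^n)$ and extract a subsequence along which $Kv_k$ converges in $\dot H^s(\R^n)$.

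First I would use reflexivity together with a local compactness argument to produce a limit candidate. By Lemma \ref{L SOB}, $\{v_k\}$ is bounded in $L^{\CRES}(\R^n)$, and by H\"older's inequality it is bounded in $L^2(B_R)$ for every $R>0$; combined with the uniform bound on the Gagliardo seminorm this yields a uniform $H^s(B_R)$ bound. The Rellich--Kondrachov compact embedding $H^s(B_R)\hookrightarrow L^q(B_R)$, valid for every $q<\CRES$, together with a diagonal argument then produces a subsequence, not relabelled, and a function $v\in\dot H^s(\R^n)$ such that $v_k\rightharpoonup v$ in $\dot H^s(\R^n)$ and in $L^{\CRES}(\R^n)$, $v_k\to v$ almost everywhere, and $v_k\to v$ strongly in $L^q(B_R)$ for every $q<\CRES$ and every $R>0$. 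The linearity of $J$ and estimate~\eqref{EL-2} of Theorem~\ref{THABC} then give, for $\beta=2n/(n+2s)$,
\begin{equation*}
\|Kv_k-Kv\|_{\dot H^s(\R^n)}\le C\,\|z_{\mu,\xi}^{p-1}(v_k-v)\|_{L^\beta(\R^n)}.
\end{equation*}

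The heart of the argument is a cut-off estimate for the right-hand side. Using the identity $\tfrac{1}{\beta}=\tfrac{p-1}{\CRES}+\tfrac{1}{\CRES}$ and H\"older's inequality, for any $R>0$ one has
\begin{equation*}
\|z_{\mu,\xi}^{p-1}(v_k-v)\|_{L^\beta(\R^n\setminus B_R)}\le \|z_{\mu,\xi}\|_{L^{\CRES}(\R^n\setminus B_R)}^{p-1}\,\|v_k-v\|_{L^{\CRES}(\R^n)}.
\end{equation*}
Since $z_{\mu,\xi}\in L^{\CRES}(\R^n)$, given $\varepsilon>0$ I can fix $R$ so large that the first factor is at most $\varepsilon$, and then the above is bounded uniformly in $k$ by $C\varepsilon^{p-1}$. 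On $B_R$ the weight $z_{\mu,\xi}^{p-1}$ is bounded and the local strong convergence from the previous step gives $\|z_{\mu,\xi}^{p-1}(v_k-v)\|_{L^\beta(B_R)}\to 0$. Letting $k\to\infty$ and then $\varepsilon\to 0$ yields $Kv_k\to Kv$ in $\dot H^s(\R^n)$, which proves the compactness of $K$ and hence the Fredholm property of $T$. The subtle point is the homogeneous character of $\dot H^s(\R^n)$: with no built-in $L^2$ control, both the local Rellich step and the tail estimate must be extracted from the critical Sobolev embedding and the $L^{\CRES}$-integrability of the bubble $z_{\mu,\xi}$.
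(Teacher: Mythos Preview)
Your proof is correct and follows essentially the same approach as the paper: both reduce to estimating $\|z_{\mu,\xi}^{p-1}(v_k-v)\|_{L^\beta(\R^n)}$ via~\eqref{EL-2}, split into $B_R$ and its complement, handle the near part by the compact embedding of $H^s(B_R)$ into a subcritical Lebesgue space, and control the far part through the decay of the bubble in $L^{\CRES}$. The only differences are cosmetic---you identify a weak limit $v$ and prove $Kv_k\to Kv$, whereas the paper shows directly that $\{Kv_k\}$ is Cauchy; and on the tail you invoke the absolute continuity of $\|z_{\mu,\xi}\|_{L^{\CRES}(\R^n\setminus B_R)}$ while the paper computes an explicit $R^{-n}$ rate.
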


\begin{proof} We already know from Lemma~\ref{po0} that~${K}$ is a bounded
operator over~$\dot{H}^s(\mathbb{R}^n)$. Now, let $\{v_k\}_{k\in\mathbb{N}}$ be a sequence such that
\begin{equation}\label{poi}
[v_k]_{\dot{H}^s(\mathbb{R}^n)}\leq 1.\end{equation} To prove compactness, we need to see that
\begin{equation}\label{CAU}{\mbox{
$\{{K}v_k\}_{k\in\mathbb{N}}$ contains a Cauchy subsequence in~$\dot H^s(\R^n)$.
}}\end{equation}For this, we fix~$\varepsilon>0$ and we
exploit~\eqref{EL-2} of Theorem
\ref{THABC} to obtain that
\begin{equation}\label{pio}\begin{split}
&[{K}v_l-{K}v_m]_{\dot{H}^s(\mathbb{R}^n)}\\&\quad=
[J(A_0'(z_{\mu,\xi})(v_l-v_m))]_{\dot{H}^s(\mathbb{R}^n)}\\ &\quad\leq
C\|A_0'(z_{\mu,\xi})(v_l-v_m)\|_{L^\beta(\mathbb{R}^n)}\\
&\quad=C(\|A_0'(z_{\mu,\xi})(v_l-v_m)\|_{L^\beta(B_R)}+\|A_0'(z_{\mu,\xi})(v_l-v_m)\|_{L^\beta(\mathbb{R}^n\setminus
B_R)}), \end{split}\end{equation} where $\beta:=\frac{2n}{n+2s}$, $R>0$, and $B_R:=\{x\in\mathbb{R}^n:
|x|<R\}$.

Thus we notice that, for a fixed~$R>0$, the quantity~$\|v_k\|_{L^2(B_R)}$
is bounded by~$\|v_k\|_{L^{\CRES}(B_R)}$, by H\"older inequality, and the latter quantity
is in turn
bounded by~$[v_k]_{\dot{H}^s(\mathbb{R}^n)}$, by Sobolev inequality. These
observations and~\eqref{poi} imply that
$$\|v_k\|_{W^{s,2}(B_R)}\leq C_R,$$
for some~$C_R>0$ that does not depend on~$k$.
Moreover, the space $W^{s,2}(B_R)$ is compactly embedded in $L^\beta(B_R)$
(see Corollary 7.2 in \cite{DPV} and recall that~$\beta\in(1,\CRES)$).
This implies that~$v_k$ contains a Cauchy subsequence in~$L^\beta(B_R)$ and so,
up to a subsequence, if~$l$ and~$m$ are sufficiently large (say~$l$, $m\ge N(R,\varepsilon)$,
for some large~$N(R,\varepsilon)$) we have that
$$ \|v_l-v_m\|_{L^\beta(B_R)} \le \varepsilon.$$
Notice also that
$$A_0'(z_{\mu,\xi})=pz_{\mu,\xi}^{\frac{4s}{n-2s}}\in L^\infty(\mathbb{R}^n),$$
therefore
\begin{equation}\label{poi0} \|A_0'(z_{\mu,\xi})(v_l-v_m)\|_{L^\beta(B_R)}\leq
\|A_0'(z_{\mu,\xi})\|_{L^\infty(\mathbb{R}^n)}\|v_l-v_m\|_{L^\beta(B_R)}\le C\varepsilon\end{equation}
as long as~$l$, $m\ge N(R,\varepsilon)$.

On the other hand, applying H\"older and Sobolev inequalities, and recalling~\eqref{poi}
once again,
\begin{eqnarray*}
&&\|A_0'(z_{\mu,\xi})(v_l-v_m)\|_{L^\beta(\mathbb{R}^n\setminus B_R)}\\
&\leq&\left(\int_{\mathbb{R}^n\setminus B_R}{(v_l-v_m)^{\CRES}\,dx}\right)^{1/\CRES}
\left(\int_{\mathbb{R}^n\setminus B_R}{(pz_{\mu,\xi}^{\frac{4s}{n-2s}})^{\frac{n}{2s}}\,dx}\right)^{2s/n}\\
&\leq&C\|v_l-v_m\|_{L^{\CRES}(\mathbb{R}^n)}
\left(\int_{\mathbb{R}^n\setminus B_{\frac{R-|\xi|}{\mu}}}{\frac{1}{|y|^{2n}}\,dy}\right)^{2s/n}\\
&\leq& C [v_l-v_m]_{\dot{H}^s(\mathbb{R}^n)}R^{-n}\\
&\leq& C R^{-n},
\end{eqnarray*}
with $C>0$ possibly different from line to line, but
independent of $R$, $l$ and $m$.
Thus, we insert this and~\eqref{poi0} into~\eqref{pio} and we deduce that
$$ [{K}v_l-{K}v_m]_{\dot{H}^s(\mathbb{R}^n)}\le C(\varepsilon+R^{-n}),$$
provided that~$l$, $m\ge N(R,\varepsilon)$, possibly up to a subsequence.
In particular, we can choose~$R$ depending on~$\varepsilon$, for instance~$R:=\varepsilon^{-1/n}$,
and define~$N_\varepsilon:=N(\varepsilon^{-1/n},\varepsilon)$. So we obtain that,
for~$l$, $m\ge N_\varepsilon$, the quantity~$[{K}v_l-{K}v_m]_{\dot{H}^s(\mathbb{R}^n)}$
is bounded by a constant times~$\varepsilon$.
This establishes~\eqref{CAU}.
\end{proof}

Finally, for any~$(v,\beta)\in \dot H^s(\R^n)\times\R^{n+1}$ we define the linear operator
\begin{equation}\label{OP}
{\mathcal{T}}(v,\beta):=\left(
Tv-\sum_{i=1}^{n+1}\beta_i q_i, \langle v,q_1\rangle,\dots,
\langle v,q_{n+1}\rangle\right),\end{equation}
with $T$ defined in \eqref{DE T}. The interest of such operator for us is that
\begin{equation}\label{OP secondo}
\frac{\partial H}{\partial (w,\alpha)}(\mu,\xi,0,0,0)[v,\beta]=
{\mathcal{T}}(v,\beta).
\end{equation}
We have:
\begin{prop}\label{OP P}
${\mathcal{T}}$
is a bounded operator from~$\dot H^s(\R^n)\times\R^{n+1}$ to~$\dot H^s(\R^n)\times\R^{n+1}$,
and from~$X^s\times\R^{n+1}$ to~$X^s\times\R^{n+1}$.

Furthermore,
${\mathcal{T}}$ is a Fredholm operator over~$\dot{H}^s(\R^n)\times\R^{n+1}$. More explicitly,
it can be written as the identity plus a compact operator
over~$\dot{H}^s(\R^n)\times\R^{n+1}$.
\end{prop}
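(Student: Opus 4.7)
The plan is to deduce every claim from the operator $T$ and a finite-rank adjustment, so that the work already done in Lemmata \ref{po0}--\ref{po00} and Proposition \ref{fredholm} can be leveraged directly.

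First I would establish the boundedness of $\mathcal{T}$ componentwise. On the first component, I decompose $Tv-\sum_{i=1}^{n+1}\beta_i q_i$: the term $Tv$ is bounded in $\dot H^s(\R^n)$ by Lemma \ref{po0} and in $X^s$ by Lemma \ref{po00}, while $\sum_i \beta_i q_i$ is a finite linear combination of fixed functions $q_i$ which lie in $X^s\subset \dot H^s(\R^n)$ (as the derivatives of $z_{\mu,\xi}$, inspected in the proof of Lemma \ref{eigen}), so its norm is controlled by $C|\beta|$ in either target space. On the other components, Cauchy--Schwarz gives $|\langle v,q_i\rangle|\le [v]_{\dot H^s(\R^n)}[q_i]_{\dot H^s(\R^n)}$, and the seminorm $[v]_{\dot H^s(\R^n)}$ is in turn controlled by $\|v\|_{X^s}$ in the $X^s$-valued statement. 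Putting these pieces together yields the boundedness of $\mathcal{T}$ on $\dot H^s(\R^n)\times\R^{n+1}$ and on $X^s\times\R^{n+1}$.

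Next, to prove the Fredholm property, I would write $\mathcal{T}=\mathrm{Id}_{\dot H^s(\R^n)\times\R^{n+1}}+\mathcal{K}$, where, using $T=\mathrm{Id}+K$ from Proposition \ref{fredholm},
\[
\mathcal{K}(v,\beta)=\left(Kv-\sum_{i=1}^{n+1}\beta_i q_i,\;\langle v,q_1\rangle-\beta_1,\dots,\langle v,q_{n+1}\rangle-\beta_{n+1}\right).
\]
I then split $\mathcal{K}=\mathcal{K}_1+\mathcal{K}_2$, where $\mathcal{K}_1(v,\beta):=(Kv,0)$ and $\mathcal{K}_2$ is the remainder. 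The operator $\mathcal{K}_1$ is compact on $\dot H^s(\R^n)\times\R^{n+1}$ because $K$ is compact on $\dot H^s(\R^n)$ by Proposition \ref{fredholm}. The operator $\mathcal{K}_2$ is finite rank: its first component lies in the $(n+1)$-dimensional space $\mathrm{span}\{q_1,\dots,q_{n+1}\}$, and its second component lies in $\R^{n+1}$, so its image is contained in a finite-dimensional subspace. Since finite-rank operators between Banach spaces are compact, $\mathcal{K}_2$ is compact, and hence so is $\mathcal{K}=\mathcal{K}_1+\mathcal{K}_2$.

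I do not expect any serious obstacle here: the only subtlety is to recognize that the contribution coming from the tangent-space adjustment and from the orthogonality constraints is of finite rank, so that the compactness question reduces to that already settled for $K$ in Proposition \ref{fredholm}. Everything else is bookkeeping using the boundedness of the building blocks, together with the fact that $q_1,\dots,q_{n+1}\in X^s\subset \dot H^s(\R^n)$.
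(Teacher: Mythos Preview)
Your proof is correct and follows essentially the same approach as the paper: the same decomposition $\mathcal{T}=\mathrm{Id}+\mathcal{K}$ is used, boundedness is obtained componentwise from Lemmata~\ref{po0}--\ref{po00} and the fact that $q_i\in X^s$, and compactness of $\mathcal{K}$ is reduced to that of $K$ from Proposition~\ref{fredholm}. The only (minor) difference is that you handle the remaining piece $\mathcal{K}_2$ by observing it is bounded and of finite rank, whereas the paper extracts a Cauchy subsequence directly via weak convergence of $v_k$ and compactness of $\R^{n+1}$; your argument is a clean shortcut to the same conclusion.
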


\begin{proof} Let
$$ S(v,\beta):=\left(-\sum_{i=1}^{n+1}\beta_i q_i, \langle v,q_1\rangle,\dots,
\langle v,q_{n+1}\rangle\right).$$
Let also~$\|\cdot\|$ be either~$\|\cdot\|_{\dot{H}^s(\R^n)}$ or~$\|\cdot\|_{X^s}$.
We have that
\begin{eqnarray*}
\|S(v,\beta)\| &\le&
\sum_{i=1}^{n+1}|\beta_i|\, \|q_i\|+\sum_{i=1}^{n+1}
\|v\|_{\dot{H}^s(\R^n)} \|q_i\|_{\dot{H}^s(\R^n)} \\
&\le& C\Big( |\beta| + \|v\|_{\dot{H}^s(\R^n)}\Big)\\
&\le& C\Big( |\beta| + \|v\|\Big).
\end{eqnarray*}
This shows that~$S$ is a bounded operator from~$\dot H^s(\R^n)\times\R^{n+1}$ to~$\dot H^s(\R^n)\times\R^{n+1}$,
and from~$X^s\times\R^{n+1}$ to~$X^s\times\R^{n+1}$. Then, noticing that~${\mathcal{T}}=(T,0)+S$
and recalling Lemmata~\ref{po0}
and~\ref{po00}, we obtain that also~${\mathcal{T}}$ is a bounded
operator from~$\dot H^s(\R^n)\times\R^{n+1}$ to~$\dot H^s(\R^n)\times\R^{n+1}$,
and from~$X^s\times\R^{n+1}$ to~$X^s\times\R^{n+1}$.

Now we show that it is Fredholm over~$\dot H^s(\R^n)\times\R^{n+1}$.
For this, we set
$$ {\mathcal{K}}(v,\beta):=\left( {K}v
-\sum_{i=1}^{n+1}\beta_i q_i, \langle v,q_1\rangle-\beta_1,\dots,
\langle v,q_{n+1}\rangle-\beta_{n+1}
\right),$$
where~${K}$ is the operator in Proposition~\ref{fredholm}.
Notice that~${\mathcal{T}}=Id_{\dot{H}^s(\R^n)\times\R^{n+1}}+{\mathcal{K}}$,
so our goal is to show that~${\mathcal{K}}$ is compact over~$\dot H^s(\R^n)\times\R^{n+1}$.
For this, we take a sequence~$(v_k,\beta_k)\in\dot H^s(\R^n)\times\R^{n+1}$
with~$\|v_k\|_{\dot H^s(\R^n)}+\|\beta_k\|_{\R^{n+1}}\le 1$
and we want to find a Cauchy subsequence in~$\dot H^s(\R^n)\times\R^{n+1}$.
To this goal, we use Proposition~\ref{fredholm} to obtain
a subsequence (still denoted by~$v_k$) such that~${K}v_k$
is Cauchy in~$\dot H^s(\R^n)$. Also, again up to subsequences,
$v_k$ is weakly convergent in~$\dot H^s(\R^n)$,
therefore~$\langle v_k,q_1\rangle$ is Cauchy (and the same holds
for~$\langle v_k,q_2\rangle,\dots,\langle v_k,q_{n+1}\rangle$).
Finally, since~$\R^{n+1}$ is finite dimensional,
up to subsequence we can assume that also~$\beta_k$ is Cauchy.
Thanks to these considerations, and writing~$\beta_k=(\beta_{k,1},\dots,\beta_{k,n+1})\in\R^{n+1}$, we have that
\begin{eqnarray*}
&& \big\|  {\mathcal{K}}(v_k,\beta_k)- {\mathcal{K}}(v_m,\beta_m)\big\|_{\dot H^s(\R^n)\times\R^{n+1}}\\
&\le& \|{K}v_k-{K}v_m\|_{\dot H^s(\R^n)}
+\sum_{i=1}^{n+1}|\beta_{k,i}-\beta_{m,i}|\,\| q_i\|_{\dot H^s(\R^n)}
+\sum_{i=1}^{n+1} |\langle v_k-v_m,\,q_i\rangle| \\
&\le& C\,\left(
\|{K}v_k-{K}v_m\|_{\dot H^s(\R^n)}+
\|\beta_k-\beta_m\|_{\R^{n+1}} +
\sum_{i=1}^{n+1} |\langle v_k-v_m,\,q_i\rangle|
\right)
\\&\le&\varepsilon,
\end{eqnarray*}
provided that~$k$ and~$m$ are large enough. This shows that~$(v_k,\beta_k)$
is Cauchy, as desired.
\end{proof}

\subsubsection{Invertibility issues}

Now we discuss the invertibility of the operator~${\mathcal{T}}$
that was introduced in \eqref{OP}.
Notice that there is a subtle point here. Indeed,
the operator~${\mathcal{T}}$ can be seen
as acting over~$\dot H^s(\R^n)\times\R^{n+1}$
or over~$X^s\times\R^{n+1}$ (see Proposition~\ref{OP P}).
On the one hand, the invertibility over~$\dot H^s(\R^n)\times\R^{n+1}$
should be expected to be easier, since the operator is Fredholm there
(see the last claim in Proposition~\ref{OP P}).
On the other hand,
since we want to obtain strong
pointwise estimates to keep control of the possible singularities of our functional,
it is crucial for us to invert the operator
in a space that controls the functions
uniformly, namely~$X^s\times\R^{n+1}$.
So our strategy will be the following:
first we invert the operator in~$\dot H^s(\R^n)\times\R^{n+1}$
(this will be accomplished using the Fredholm property in Proposition~\ref{OP P},
the regularity theory in Theorem~\ref{LinfResult}
and a nondegeneracy result in~\cite{DDS}).
Then we will deduce from this information and a further regularity theory
that~${\mathcal{T}}$ is actually invertible also in~$X^s\times\R^{n+1}$.

The details of the argument go as follows. First, we recall the standard
definition of invertibility:

\begin{defn}
Let $X,Y$ Banach spaces, and let $S:X\rightarrow Y$ be a linear bounded operator. We say
that $S$ is invertible
(and we write~$S\in Inv(X,Y)$) if there exists a
linear bounded operator $\tilde{S}:Y\rightarrow X$ such that
$$S\tilde{S}=Id_Y,\quad \tilde{S}S=Id_X.$$
\end{defn}

Then, we show that~${\mathcal{T}}$ is invertible
in~$\dot H^s(\R^n)\times\R^{n+1}$:

\begin{prop}\label{INV 1}
${\mathcal{T}}\in Inv(\dot{H}^s(\mathbb{R}^n)\times\R^{n+1},\,\dot{H}^s(\mathbb{R}^n)\times\R^{n+1})$.
\end{prop}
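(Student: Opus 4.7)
The plan is to reduce the invertibility of $\mathcal{T}$ to injectivity. Indeed, Proposition~\ref{OP P} already tells us that $\mathcal{T}=Id_{\dot H^s(\R^n)\times\R^{n+1}}+\mathcal{K}$ with $\mathcal{K}$ compact on $\dot H^s(\R^n)\times\R^{n+1}$, so the Fredholm alternative applies: surjectivity will follow from injectivity automatically, and the inverse will be bounded by the open mapping theorem. So the entire problem is to show that $\mathcal{T}(v,\beta)=0$ forces $v=0$ and $\beta=0$.

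First I would unpack the condition $\mathcal{T}(v,\beta)=0$: the last $n+1$ components give $v\in (T_{z_{\mu,\xi}}Z_0)^\perp$, while the first component gives $v-J(pz_{\mu,\xi}^{p-1}v)=\sum_{i=1}^{n+1}\beta_i q_i$. Using~\eqref{EL-4} of Theorem~\ref{THABC} and~\eqref{EQqj}, applying $(-\Delta)^s$ in the weak sense converts this into
\begin{equation*}
(-\Delta)^s v \;=\; p\,z_{\mu,\xi}^{p-1}\,v \,+\, p\,z_{\mu,\xi}^{p-1}\sum_{i=1}^{n+1}\beta_i q_i \qquad{\mbox{in }}\R^n.
\end{equation*}
Testing this identity against $q_j$ and integrating by parts using $(-\Delta)^s q_j=pz_{\mu,\xi}^{p-1}q_j$, the contribution $\int p z_{\mu,\xi}^{p-1} v\, q_j$ equals $\langle v,q_j\rangle$, which cancels with the $\langle v,q_j\rangle$ coming from the left hand side. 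What remains on the right is $\sum_i\beta_i\langle q_i,q_j\rangle$, and by Lemma~\ref{eigen} this equals $\beta_j\lambda_j$ with $\lambda_j>0$. Hence $\beta_j=0$ for every $j$.

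Once $\beta=0$, the remaining equation is the linearized equation $(-\Delta)^s v=pz_{\mu,\xi}^{p-1}v$ with $v\in\dot H^s(\R^n)$. The next step is to upgrade $v$ to $L^\infty$. For this I would invoke Theorem~\ref{LinfResult} with $f(x,r)=pz_{\mu,\xi}^{p-1}(x)\,r$, i.e. $\gamma=1$ and $h(x)=pz_{\mu,\xi}^{p-1}(x)$. Since $z_{\mu,\xi}(x)\sim |x|^{-(n-2s)}$ at infinity we have $z_{\mu,\xi}^{p-1}(x)\sim|x|^{-4s}$, so $z_{\mu,\xi}^{p-1}\in L^\infty\cap L^m(\R^n)$ for every $m>n/(4s)$; since $n>4s$, this range includes values $m>\CRES/(\CRES-2)=n/(2s)=\underline{m}$, so the hypotheses of Theorem~\ref{LinfResult} hold and we conclude $v\in L^\infty(\R^n)$.

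Finally, the nondegeneracy result of~\cite{DDS} quoted in the introduction says that every bounded solution of $(-\Delta)^s v=pz_{\mu,\xi}^{p-1}v$ lies in $\mathrm{span}\{q_1,\dots,q_{n+1}\}$. Combined with the orthogonality condition $v\in(T_{z_{\mu,\xi}}Z_0)^\perp$ and the strict positivity $\lambda_i>0$ from Lemma~\ref{eigen}, this forces $v=0$, completing the proof of injectivity and hence of invertibility. The main technical obstacle is the $L^\infty$ upgrade, since the classical bootstrap for the fractional Laplacian is delicate at the critical exponent; Theorem~\ref{LinfResult} was designed precisely to handle this step, so the plan is really to package the nondegeneracy of $z_{\mu,\xi}$ together with that $L^\infty$ estimate and the Fredholm structure already established.
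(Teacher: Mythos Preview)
Your proposal is correct and follows essentially the same strategy as the paper: reduce to injectivity via the Fredholm structure of Proposition~\ref{OP P}, use the self-adjointness computation (which the paper records as~\eqref{spi}) together with Lemma~\ref{eigen} to kill~$\beta$, then invoke Theorem~\ref{LinfResult} to upgrade~$v\in\dot H^s(\R^n)$ to~$L^\infty$ and apply the nondegeneracy result of~\cite{DDS} with the orthogonality condition to conclude~$v=0$. The only cosmetic difference is that you apply~$(-\Delta)^s$ to the first component before testing against~$q_j$, whereas the paper computes~$\langle Tv,q_j\rangle$ directly; the underlying cancellation is identical.
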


\begin{proof} By Proposition~\ref{OP P} and the theory of Fredholm operators
(see e.g. \cite{B}, pages 168-169, for a very brief summary, and Chapter IV, Section 5, of \cite{K}, or \cite{S}, for a detailed analysis), it is enough to show that~${\mathcal{T}}$
is injective over~$\dot{H}^s(\mathbb{R}^n)\times\R^{n+1}$. For this, let us take~$(v,\beta)\in \dot{H}^s(\mathbb{R}^n)\times\R^{n+1}$ such that~${\mathcal{T}}(v,\beta)=0$, that is, by~\eqref{OP},
\begin{equation}\label{trew}\begin{split}
& Tv=\sum_{i=1}^{n+1}\beta_i q_i,\\
&\langle v,q_1\rangle=\dots=
\langle v,q_{n+1}\rangle=0.
\end{split}\end{equation}
Fixed~$j\in\{1,\dots,n+1\}$,
using~\eqref{DE T}, \eqref{EL-4} and~\eqref{EQqj}, we observe that
\begin{equation}\label{spi}\begin{split}
\langle Tv, q_j\rangle\;&=
\langle v-pJ(z_{\mu,\xi}^{p-1}v),\,q_j\rangle \\
&= \langle v,\,q_j\rangle-p\int_{\R^n}(-\Delta)^s J(z_{\mu,\xi}^{p-1}v)\,q_j
\\ &=
\langle v,\,q_j\rangle-p\int_{\R^n} z_{\mu,\xi}^{p-1}v\,q_j \\
&=
\langle v,\,q_j\rangle-\langle v,\,q_j\rangle
\\ &=0.\end{split}\end{equation}
This, \eqref{trew} and Lemma~\ref{eigen} give that
$$ 0=\langle Tv,q_j\rangle=\sum_{i=1}^{n+1}\beta_i \langle q_i,q_j\rangle
=\lambda_j\beta_j,$$
and so
\begin{equation}\label{ewfd4}
{\mbox{$\beta_j=0$ for every $j\in\{1,\dots,n+1\}$.}}\end{equation}
Therefore, $v\in\dot H^s(\R^n)$ is a weak solution of~$Tv=0$,
that is, by~\eqref{DE T}
and~\eqref{EL-4}, the equation~$(-\Delta)^s v=pz_{\mu,\xi}^{p-1}v$.
Accordingly, by Theorem~\ref{LinfResult}, we obtain that~$v\in L^\infty(\R^n)$.

Thanks to this, we can apply the
nondegeneracy result in~\cite{DDS}, that gives that~$v$ must be a linear combination of~$q_1,\dots,q_{n+1}$.
So we write
\begin{equation}\label{saddgf}
v=\sum_{i=1}^{n+1} c_i q_i\end{equation}
for some~$c_i\in\R$, we recall~\eqref{trew} and once again
Lemma~\ref{eigen}, and we compute
$$ 0=\langle v,q_j\rangle=\sum_{i=1}^{n+1} c_i \langle q_i,q_j\rangle=
c_j\lambda_j,$$
that gives~$c_j=0$ for every~$j\in\{1,\dots,n+1\}$. By plugging this
information into~\eqref{saddgf}, we conclude that~$v=0$.
This and~\eqref{ewfd4}
give that~$(v,\beta)=0$ and so~${\mathcal{T}}$ is injective on~$\dot H^s(\R^n)\times\R^{n+1}$.
\end{proof}

Next, we aim to prove that~${\mathcal{T}}\in Inv(
X^s\times\R^{n+1},\,X^s\times\R^{n+1})$.
For this scope, we need an improved regularity theory result, which goes as follows:

\begin{lemma}\label{qwrhgsa44LEMMA}
Let~$C_o>0$.
For any~$u\in X^s$, $(\alpha,\beta)\in\R^{n+1}\times\R^{n+1}$ and
any~$\psi\in\dot{H}^s(\R^n)$ which is a weak solution of
\begin{equation}\label{7sdvfbrt5} (-\Delta)^s \psi =p\sum_{i=1}^{n+1} \alpha_i z_{\mu,\xi}^{p-1}q_i
+pz_{\mu,\xi}^{p-1}\psi+pz_{\mu,\xi}^{p-1}u\end{equation}
with
\begin{equation}\label{qwrhgsa44}
[\psi]_{\dot H^s(\R^n)} \le C_o \Big(
\|u\|_{X^s}+\|\beta\|_{\R^{n+1}}\Big),\end{equation}
we have that~$\psi\in L^\infty(\R^n)$ and
\begin{equation}\label{67}
\|\psi\|_{L^\infty(\R^n)}\le C\Big(
\|u\|_{X^s}+\|\alpha\|_{\R^{n+1}}+\|\beta\|_{\R^{n+1}}\Big)
\end{equation}
for some~$C>0$.
\end{lemma}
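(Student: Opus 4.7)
The strategy is to reduce the estimate to a uniform a priori bound via linear rescaling, and then invoke the general $L^\infty$ result of Theorem~\ref{LinfResult}. If $M := \|u\|_{X^s} + \|\alpha\|_{\R^{n+1}} + \|\beta\|_{\R^{n+1}} = 0$, then \eqref{qwrhgsa44} forces $[\psi]_{\dot H^s(\R^n)}=0$, hence $\psi\equiv 0$ and \eqref{67} is trivial. If $M>0$, observe that equation \eqref{7sdvfbrt5} is linear in $(\psi,u,\alpha)$, the hypothesis \eqref{qwrhgsa44} is homogeneous in $(\psi,u,\beta)$, and the desired conclusion \eqref{67} is homogeneous in $(\psi,u,\alpha,\beta)$. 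Rescaling everything by $M^{-1}$, I may therefore assume
$$\|u\|_{X^s} + \|\alpha\|_{\R^{n+1}} + \|\beta\|_{\R^{n+1}} = 1,$$
and the task is reduced to proving $\|\psi\|_{L^\infty(\R^n)}\le C$ for a constant depending only on $n$, $s$, $C_o$ and the fixed data of $Z_0$.

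Under this normalization, I rewrite \eqref{7sdvfbrt5} as $(-\Delta)^s\psi = f(x,\psi(x))$ with
$$f(x,r) := p\,z_{\mu,\xi}^{p-1}(x)\,r + F(x), \qquad F(x) := p\sum_{i=1}^{n+1}\alpha_i\,z_{\mu,\xi}^{p-1}(x)\,q_i(x) + p\,z_{\mu,\xi}^{p-1}(x)\,u(x),$$
and split the bound as $|f(x,r)|\le h_1(x)|r|^{\gamma_1} + h_2(x)|r|^{\gamma_2}$ with $\gamma_1 := 1$, $h_1 := p z_{\mu,\xi}^{p-1}$, $\gamma_2 := 0$, $h_2 := |F|$. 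Since $\CRES>2$, both exponents lie in $[0,1]\subset[0,\CRES-1)$ and the threshold in \eqref{DIST} equals $\underline{m}_i=\CRES/(\CRES-2)=n/(2s)$.

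Next I verify the integrability conditions uniformly in the normalized data. As $z_{\mu,\xi}$ is bounded and decays like $|x|^{-(n-2s)}$, the power $z_{\mu,\xi}^{p-1}$ decays like $|x|^{-4s}$ and belongs to $L^m(\R^n)$ for every $m>n/(4s)$; choosing $m_1=m_2:=n/(2s)+1$ gives $\|h_1\|_{L^{m_1}}\le C$. For $h_2$, the functions $q_i$ are uniformly bounded on $Z_0$, and the normalization gives $\|\alpha\|\le 1$ and $\|u\|_{L^\infty}\le\|u\|_{X^s}\le 1$, so $|F(x)|\le C z_{\mu,\xi}^{p-1}(x)$ and hence $\|h_2\|_{L^{m_2}}\le C$. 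Finally, Lemma~\ref{L SOB} and the hypothesis \eqref{qwrhgsa44} yield
$$\|\psi\|_{L^{\CRES}(\R^n)} \le C\,[\psi]_{\dot H^s(\R^n)} \le C\,C_o.$$

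All the quantities on which the constant of Theorem~\ref{LinfResult} depends ($n$, $s$, $\gamma_i$, $m_i$, $\|h_i\|_{L^{m_i}}$, $\|\psi\|_{L^{\CRES}}$) are now bounded by constants depending only on $n$, $s$, $C_o$ and the fixed family $Z_0$. Applying that theorem to $\psi$ produces $\|\psi\|_{L^\infty(\R^n)}\le C$, and undoing the rescaling by $M$ gives precisely \eqref{67}. The main obstacle in this plan is that the target bound \eqref{67} must be \emph{linear} in the data $(\|u\|_{X^s},\|\alpha\|,\|\beta\|)$, while Theorem~\ref{LinfResult} supplies only a non-explicit dependence on $\|h_i\|_{L^{m_i}}$ and $\|\psi\|_{L^{\CRES}}$; the linearity is recovered precisely by exploiting the linearity of the underlying equation through the initial rescaling step.
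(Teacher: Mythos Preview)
Your argument is correct and takes a genuinely different route from the paper's proof. The paper argues by contradiction: assuming a sequence $(\psi_k,u_k,\alpha_k,\beta_k)$ violating~\eqref{67}, it normalizes by $\|\psi_k\|_{L^\infty(\R^n)}$, invokes an additional $C^a$ regularity estimate (Proposition~5 of~\cite{SV}) to obtain uniform H\"older bounds on $\tilde\psi_k$, locates a ball where $|\tilde\psi_k|\ge 1/4$, and reaches a contradiction with the vanishing $\dot H^s$ seminorm coming from~\eqref{qwrhgsa44}. Your approach instead normalizes by $M=\|u\|_{X^s}+\|\alpha\|+\|\beta\|$ and applies Theorem~\ref{LinfResult} directly, observing that after rescaling the coefficients $h_i$ and the $L^{\CRES}$ norm of $\psi$ are uniformly controlled, so the output bound is uniform. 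This is more economical: it avoids the extra H\"older regularity input from~\cite{SV} and replaces the contradiction/compactness step by a single invocation of Theorem~\ref{LinfResult}. The paper's route, on the other hand, separates the qualitative statement $\psi\in L^\infty$ from the quantitative bound and makes the role of~\eqref{qwrhgsa44} more transparent.

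One point you leave implicit is that the constant in Theorem~\ref{LinfResult} can be taken to depend only on \emph{upper bounds} for $\|h_i\|_{L^{m_i}}$ and $\|\psi\|_{L^{\CRES}}$, not on their exact values; this is clear from the proof of that theorem (the Stampacchia iteration constants are monotone in these quantities) but is worth stating explicitly, since the theorem as written only says the bound ``depends on'' them.
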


\begin{proof} The core of the proof is that the equation
is linear in the triplet~$(\psi,u,\alpha)$, so we get
the desired result by a careful scaling argument.
The rigorous argument goes as follows.
First, we use Theorem~\ref{LinfResult} to get that~$\psi\in L^\infty(\R^n)$,
so we focus on the proof of~\eqref{67}.
Suppose, by contradiction, that~\eqref{67} is false.
Then, for any~$k$ there exists
a quadruplet~$(\psi_k,u_k,\alpha_k,\beta_k)\in \dot{H}^s(\R^n)\times X^s\times\R^{n+1}\times\R^{n+1}$
such that
\begin{equation}\label{68} (-\Delta)^s \psi_k
=p\sum_{i=1}^{n+1} \alpha_{k,i} z_{\mu,\xi}^{p-1}q_i
+pz_{\mu,\xi}^{p-1}\psi_k+pz_{\mu,\xi}^{p-1}u_k,\end{equation}
\begin{equation}\label{69}
\|\psi_k\|_{L^\infty(\R^n)}>k\,\Big(
\|u_k\|_{X^s}+\|\alpha_k\|_{\R^{n+1}}+\|\beta_k\|_{\R^{n+1}}\Big)
\end{equation}
and
\begin{equation}\label{890} [\psi_k]_{\dot H^s(\R^n)} \le C_o \Big(
\|u_k\|_{X^s}+\|\beta_k\|_{\R^{n+1}}\Big).\end{equation}
We remark that~$\|\psi_k\|_{L^\infty(\R^n)}<+\infty$,
since~$\psi_k\in L^\infty(\R^n)$, and~$\|\psi_k\|_{L^\infty(\R^n)}>0$,
due to~\eqref{69}. Thus, we can define
\begin{eqnarray*}&& \tilde \psi_k:=\frac{\psi_k}{\|\psi_k\|_{L^\infty(\R^n)}},
\qquad \tilde u_k:=\frac{u_k}{\|\psi_k\|_{L^\infty(\R^n)}},\\&&\qquad
\qquad\tilde \alpha_k:=\frac{\alpha_k}{\|\psi_k\|_{L^\infty(\R^n)}}
\qquad{\mbox{ and }}
\qquad\tilde \beta_k:=\frac{\beta_k}{\|\psi_k\|_{L^\infty(\R^n)}}.\end{eqnarray*}
Notice that
\begin{equation}\label{70}\begin{split}
& \|\tilde\psi_k\|_{L^\infty(\R^n)}=1\\
{\mbox{and }} \quad&
\|\tilde u_k\|_{X^s}+\|\tilde\alpha_k\|_{\R^{n+1}}
+\|\tilde\beta_k\|_{\R^{n+1}}
=\frac{\|u_k\|_{X^s}+\|\alpha_k\|_{\R^{n+1}}+\|\beta_k\|_{\R^{n+1}}
}{\|\psi_k\|_{L^\infty(\R^n)}}\le\frac{1}{k},
\end{split}\end{equation}
thanks to~\eqref{69}.

Also, by linearity, equation~\eqref{68} becomes
\begin{equation*}
(-\Delta)^s \tilde\psi_k
=p\sum_{i=1}^{n+1} \tilde\alpha_{k,i} z_{\mu,\xi}^{p-1}q_i
+pz_{\mu,\xi}^{p-1}\tilde\psi_k+pz_{\mu,\xi}^{p-1}\tilde u_k.\end{equation*}
The right hand side of this equation is bounded uniformly in~$L^\infty(\R^n)$,
thanks to~\eqref{70} and the fact that~$z_{\mu,\xi}\in L^\infty(\mathbb{R}^n)$.

Thus, by Proposition 5 in \cite{SV}, we
know that for every $x\in\mathbb{R}^n$, there exists a constant $C>0$
and $a\in(0,1)$ such that
\begin{equation*}
\|\tilde\psi_k\|_{C^a(B_{1/4}(x))}\leq C. \end{equation*}
We remark that~$C$ and~$a$ are independent of $k$ and $x$, therefore
\begin{equation}\label{CalphaBound}
\|\tilde\psi_k\|_{C^a(\mathbb{R}^n)}\leq C. \end{equation}
{F}rom~\eqref{70}, we know that there exists a point~$x_k\in \R^n$
such that~$\tilde\psi_k(x_k)\ge 1/2$. By~\eqref{CalphaBound},
there exists~$\rho>0$, which is independent of~$k$, such that~$\tilde\psi_k
\ge 1/4$ in~$B_\rho(x_k)$. As a consequence,
$$\|\tilde\psi_k\|_{L^{\CRES}(\mathbb{R}^n)}\geq
\left(\int_{B_\rho(x_k)}{\left(\frac{1}{4}\right)^{\CRES}\,dx}\right)^{1/\CRES}
\geq c_o,$$ with $c_o>0$ independent of $k$. Thus, by Sobolev inequality,
\begin{equation}\label{891}
[\tilde\psi_k]_{\dot H^s(\mathbb{R}^n)}\ge c_o,
\end{equation}
up to renaming~$c_o$.
On the other hand, by~\eqref{890} and~\eqref{69}, we have that
\begin{eqnarray*}
[\tilde\psi_k]_{\dot H^s(\mathbb{R}^n)} =
\frac{[\psi_k]_{\dot H^s(\R^n)}}{\|\psi_k\|_{L^\infty(\R^n)}} \le
\frac{ C_o \Big(
\|u_k\|_{X^s}+\|\beta_k\|_{\R^{n+1}}\Big)}{ \|\psi_k\|_{L^\infty(\R^n)} }
\le\frac{C_o}{k}.
\end{eqnarray*}
This is in contradiction with~\eqref{891} when~$k$ is large, and therefore
the desired result is established.
\end{proof}

Finally, we show that~${\mathcal{T}}$ is invertible in~$X^s(\R^n)\times\R^{n+1}$:

\begin{prop}\label{invertibility}
${\mathcal{T}}\in Inv(X^s\times\R^{n+1},\,X^s\times\R^{n+1})$.
\end{prop}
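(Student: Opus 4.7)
The plan is to upgrade the inverse already supplied by Proposition~\ref{INV 1} from $\dot H^s(\R^n)\times\R^{n+1}$ to $X^s\times\R^{n+1}$, using the regularity result in Lemma~\ref{qwrhgsa44LEMMA}. Concretely, since $X^s\times\R^{n+1}\subset \dot H^s(\R^n)\times\R^{n+1}$, given $(u,\beta)\in X^s\times\R^{n+1}$ we can let $(\psi,\alpha):=\widetilde{\mathcal{T}}(u,\beta)$ where $\widetilde{\mathcal{T}}$ is the inverse produced by Proposition~\ref{INV 1}, so that $(\psi,\alpha)\in\dot H^s(\R^n)\times\R^{n+1}$ satisfies
\[
T\psi-\sum_{i=1}^{n+1}\alpha_i q_i=u,\qquad \langle \psi,q_j\rangle=\beta_j\ \text{for }j=1,\dots,n+1,
\]
together with the bound $[\psi]_{\dot H^s(\R^n)}+\|\alpha\|_{\R^{n+1}}\le C\bigl(\|u\|_{X^s}+\|\beta\|_{\R^{n+1}}\bigr)$ coming from continuity of $\widetilde{\mathcal{T}}$ on $\dot H^s(\R^n)\times\R^{n+1}$. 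It then suffices to prove that $\psi\in L^\infty(\R^n)$ with $\|\psi\|_{L^\infty(\R^n)}\le C\bigl(\|u\|_{X^s}+\|\beta\|_{\R^{n+1}}\bigr)$, since this yields both that $\widetilde{\mathcal{T}}$ sends $X^s\times\R^{n+1}$ into itself and that its restriction is bounded; combined with the boundedness of $\mathcal{T}$ on $X^s\times\R^{n+1}$ from Proposition~\ref{OP P}, this would give $\mathcal{T}\in Inv(X^s\times\R^{n+1},X^s\times\R^{n+1})$.

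To obtain the $L^\infty$ bound, I would rewrite the identity $T\psi-\sum \alpha_i q_i=u$ as a fractional PDE. Using $T\psi=\psi-J(p z_{\mu,\xi}^{p-1}\psi)$ from \eqref{DE T}, applying $(-\Delta)^s$ and using \eqref{EL-4} together with the eigenfunction equation \eqref{EQqj} satisfied by the $q_i$, one gets
\[
(-\Delta)^s\psi \,=\, (-\Delta)^s u + p z_{\mu,\xi}^{p-1}\psi + p\sum_{i=1}^{n+1}\alpha_i z_{\mu,\xi}^{p-1} q_i
\]
in the weak sense. Setting $\widetilde\psi:=\psi-u\in \dot H^s(\R^n)$, this becomes
\[
(-\Delta)^s \widetilde\psi \,=\, p\sum_{i=1}^{n+1}\alpha_i z_{\mu,\xi}^{p-1}q_i + p z_{\mu,\xi}^{p-1}\widetilde\psi + p z_{\mu,\xi}^{p-1} u,
\]
which is precisely equation \eqref{7sdvfbrt5} of Lemma~\ref{qwrhgsa44LEMMA}, with the roles of $\psi$ and $u$ played by $\widetilde\psi$ and $u$. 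The hypothesis \eqref{qwrhgsa44} is then verified since
\[
[\widetilde\psi]_{\dot H^s(\R^n)}\le [\psi]_{\dot H^s(\R^n)}+[u]_{\dot H^s(\R^n)}\le C\bigl(\|u\|_{X^s}+\|\beta\|_{\R^{n+1}}\bigr),
\]
using the $\dot H^s$ bound on $\psi$ noted above.

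Applying Lemma~\ref{qwrhgsa44LEMMA} to $\widetilde\psi$ yields $\widetilde\psi\in L^\infty(\R^n)$ together with the estimate
\[
\|\widetilde\psi\|_{L^\infty(\R^n)}\le C\bigl(\|u\|_{X^s}+\|\alpha\|_{\R^{n+1}}+\|\beta\|_{\R^{n+1}}\bigr)\le C'\bigl(\|u\|_{X^s}+\|\beta\|_{\R^{n+1}}\bigr),
\]
where the last inequality uses again that $\|\alpha\|_{\R^{n+1}}\le C\bigl(\|u\|_{X^s}+\|\beta\|_{\R^{n+1}}\bigr)$. Since $u\in L^\infty(\R^n)$ with $\|u\|_{L^\infty(\R^n)}\le\|u\|_{X^s}$, we conclude that $\psi=\widetilde\psi+u\in L^\infty(\R^n)$ and
\[
\|\psi\|_{X^s}+\|\alpha\|_{\R^{n+1}}\le C\bigl(\|u\|_{X^s}+\|\beta\|_{\R^{n+1}}\bigr),
\]
which is the desired continuity of the restricted inverse. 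The main conceptual step is the reduction $\widetilde\psi=\psi-u$, which absorbs the (possibly irregular) datum $u$ into the right--hand side so that Lemma~\ref{qwrhgsa44LEMMA} becomes applicable; the rest is just bookkeeping. I would expect no genuine obstacle here, since both the Fredholm/invertibility theory on the Hilbert side (Proposition~\ref{INV 1}) and the $L^\infty$ regularity engine (Lemma~\ref{qwrhgsa44LEMMA}) have already been set up.
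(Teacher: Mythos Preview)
Your proposal is correct and follows essentially the same route as the paper: restrict the $\dot H^s$-inverse $\widetilde{\mathcal{T}}$ from Proposition~\ref{INV 1} to $X^s\times\R^{n+1}$, write $T\psi-\sum_i\alpha_i q_i=u$, subtract $u$ to obtain the equation~\eqref{7sdvfbrt5} for $\widetilde\psi=\psi-u$, and invoke Lemma~\ref{qwrhgsa44LEMMA} for the $L^\infty$ bound. The only cosmetic difference is that the paper controls $\|\alpha\|_{\R^{n+1}}$ directly via the orthogonality identity~\eqref{spi} and Lemma~\ref{eigen} (obtaining $\|\alpha\|_{\R^{n+1}}\le C\,[u]_{\dot H^s(\R^n)}$, independent of $\beta$), whereas you simply use the continuity of $\widetilde{\mathcal{T}}$ on $\dot H^s(\R^n)\times\R^{n+1}$; either bound suffices for the conclusion.
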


\begin{proof}
By Proposition~\ref{INV 1}, we know that
${\mathcal{T}}\in Inv(\dot{H}^s(\mathbb{R}^n)\times\R^{n+1},
\dot{H}^s(\mathbb{R}^n)\times\R^{n+1})$. Therefore,
there exists an operator
$$\tilde{{\mathcal{T}}}:\dot{H}^s(\mathbb{R}^n)\times\R^{n+1}
\rightarrow \dot{H}^s(\mathbb{R}^n)\times\R^{n+1}$$
that is linear and bounded
and such that~${\mathcal{T}}\tilde{{\mathcal{T}}}=
\tilde{{\mathcal{T}}}{\mathcal{T}}=Id_{\dot{H}^s(\R^n)\times\R^{n+1}}$.
The boundedness of~$\tilde{\mathcal{T}}$ as an operator acting over~$\dot{H}^s(\mathbb{R}^n)\times\R^{n+1}$
can be explicitly written as
\begin{equation}\label{ass0}
\|\tilde{\mathcal{T}}(u,\beta)\|_{\dot{H}^s(\mathbb{R}^n)\times\R^{n+1}}
\le C\,
\|(u,\beta)\|_{\dot{H}^s(\mathbb{R}^n)\times\R^{n+1}}.\end{equation}
Now, since $X^s$ is a subset of~$\dot{H}^s(\mathbb{R}^n)$,
we can consider the restriction operator of~$\tilde{{\mathcal{T}}}$ acting
on~$X^s\times\R^{n+1}$ (this restriction operator
will be denoted by~$\tilde{{\mathcal{T}}}$ as well).
We observe that, for any~$u\in X^s$, we have that~$u\in\dot{H}^s(\R^n)$,
therefore, for any~$\beta\in\R^{n+1}$,
$$ {\mathcal{T}}\tilde{{\mathcal{T}}} (u,\beta)=Id_{\dot{H}^s(\R^n)\times\R^{n+1}}
(u,\beta)=(u,\beta).$$
Furthermore, if~$u\in X^s$ and~$\beta\in\R^{n+1}$,
then~${\mathcal{T}}(u,\beta)\in X^s\times\R^{n+1}$, due to Proposition~\ref{OP P}.
Hence
the restriction of~$\tilde{{\mathcal{T}}}$ over~$X^s\times\R^{n+1}$
may act on~${\mathcal{T}} (u,\beta)$, for any~$(u,\beta)\in
X^s\times\R^{n+1}$, and we obtain that
$$ \tilde{{\mathcal{T}}}{\mathcal{T}} (u,\beta)=Id_{\dot{H}^s(\R^n)\times\R^{n+1}}(u,\beta)=(u,\beta).$$
It remains to prove that
\begin{equation}\label{goal1}
\|\tilde{\mathcal{T}}(u,\beta)\|_{X^s\times\R^{n+1}}
\le C\,\Big( \|u\|_{X^s}+\|\beta\|_{\R^{n+1}}\Big).
\end{equation}
To prove it, we first use~\eqref{ass0} to bound~$
\|\tilde{\mathcal{T}}(u,\beta)\|_{\dot{H}^s(\mathbb{R}^n)\times\R^{n+1}}$
with~$[u]_{\dot{H}^s(\mathbb{R}^n)} +\|\beta\|_{\R^{n+1}}$,
and then we observe that the latter quantity is in turn bounded
by~$\|u\|_{X^s}+\|\beta\|_{\R^{n+1}}$. Thus,
in order to show that~${\mathcal{T}}$ is bounded as an operator over~$X^s\times\R^{n+1}$,
we only have to bound~$
\|\tilde{\mathcal{T}}(u,\beta)\|_{L^\infty(\R^n)\times\R^{n+1}}$.

That is to say that the desired result is proved if we show that,
for any~$u\in X^s$ and any~$\beta\in\R^{n+1}$ we have that
\begin{equation}\label{goal}
\|\tilde{\mathcal{T}}(u,\beta)\|_{L^\infty(\R^n)\times\R^{n+1}}
\le C\,\Big( \|u\|_{X^s}+\|\beta\|_{\R^{n+1}}\Big).
\end{equation}
To prove this, we fix~$u\in X^s$ and~$\beta\in\R^{n+1}$
and we set~$(v,\alpha):=\tilde{\mathcal{T}}(u,\beta)\in\dot{H}^s(\R^n)\times\R^{n+1}$.
Thus, by~\eqref{OP},
\begin{equation}\label{W3DGRa}
X^s\times\R^{n+1}\ni (u,\beta)={\mathcal{T}}(v,\alpha)=\left(
Tv-\sum_{i=1}^{n+1}\alpha_i q_i, \langle v,q_1\rangle,\dots,
\langle v,q_{n+1}\rangle\right).\end{equation}
Taking the first coordinate and using~\eqref{spi}, we obtain that,
for any~$j\in\{1,\dots,n+1\}$,
$$ \langle u,q_j\rangle
=\langle Tv-\sum_{i=1}^{n+1}\alpha_i q_i,\,q_j\rangle=
-\sum_{i=1}^{n+1}\alpha_i \langle q_i,\,q_j\rangle.$$
Thus, by Lemma~\ref{eigen}, we have
that~$\langle u,q_j\rangle=-\alpha_j \lambda_j$ and therefore
$$ |\alpha_j|\le C \,[u]_{\dot H^s(\R^n)}.$$
Accordingly
\begin{equation}\label{SERVIRA}
\|\alpha\|_{\R^{n+1}}\le C\,\|u\|_{X^s}.
\end{equation}
Now we set~$\psi:=v-u$. Notice that~$\psi\in\dot{H}^s(\R^n)$, since so are~$u$ and~$v$.
Moreover, taking the first coordinate in~\eqref{W3DGRa}
and using~\eqref{DE T}
and~\eqref{EL-4}, we see that~$\psi$ is a weak solution of
\begin{eqnarray*}
(-\Delta)^s \psi &=& (-\Delta)^s v-(-\Delta)^s u\\
&=& (-\Delta)^s v
-(-\Delta)^s Tv+\sum_{i=1}^{n+1}\alpha_i (-\Delta)^s q_i\\
&=& (-\Delta)^s J(A_0'(z_{\mu,\xi})v)+\sum_{i=1}^{n+1}\alpha_i (-\Delta)^s q_i\\
&=& pz_{\mu,\xi}^{p-1}v+p\sum_{i=1}^{n+1} \alpha_i z_{\mu,\xi}^{p-1}q_i\\
&=& pz_{\mu,\xi}^{p-1}\psi+pz_{\mu,\xi}^{p-1}u
+p\sum_{i=1}^{n+1} \alpha_i z_{\mu,\xi}^{p-1}q_i.
\end{eqnarray*}
The reader may check that this agrees with~\eqref{7sdvfbrt5}.
Furthermore, by~\eqref{ass0},
\begin{eqnarray*}
[v]_{\dot{H}^s(\mathbb{R}^n)} &\le&
\|(v,\alpha)\|_{\dot{H}^s(\mathbb{R}^n)\times\R^{n+1}}
\\ &=&
\|\tilde{\mathcal{T}}(u,\beta)\|_{\dot{H}^s(\mathbb{R}^n)\times\R^{n+1}}
\\ &\le&
C\,\Big( [u]_{\dot{H}^s(\mathbb{R}^n)}+
\|\beta\|_{\R^{n+1}}\Big).
\end{eqnarray*}
Consequently,
$$ [\psi]_{\dot{H}^s(\mathbb{R}^n)} \le
[u]_{\dot{H}^s(\mathbb{R}^n)} +[v]_{\dot{H}^s(\mathbb{R}^n)}
\le C\,\Big( [u]_{\dot{H}^s(\mathbb{R}^n)}+
\|\beta\|_{\R^{n+1}}\Big),$$
up to renaming constants.
The reader may check that this implies~\eqref{qwrhgsa44}.
Accordingly the assumptions of Lemma~\ref{qwrhgsa44LEMMA}
are satisfied, and we deduce from it that
$$ \|\psi\|_{L^\infty(\R^n)}\le C\Big(
\|u\|_{X^s}+\|\alpha\|_{\R^{n+1}}+\|\beta\|_{\R^{n+1}}\Big).$$
Consequently, using~\eqref{SERVIRA}, we obtain that
\begin{eqnarray*}
\|v\|_{L^\infty(\R^n)} &\le& \|u\|_{L^\infty(\R^n)}
+ \|\psi\|_{L^\infty(\R^n)}
\\ &\le& C\,\Big(
\|u\|_{X^s}+\|\alpha\|_{\R^{n+1}}+\|\beta\|_{\R^{n+1}}\Big)\\
&\le& C\,\Big(
\|u\|_{X^s}+\|\beta\|_{\R^{n+1}}\Big),
\end{eqnarray*}
up to renaming constants. Using this and once again~\eqref{SERVIRA},
we obtain that
\begin{eqnarray*} &&\|\tilde{\mathcal{T}}(u,\beta)\|_{L^\infty(\R^n)\times\R^{n+1}}
=\|(v,\alpha)\|_{L^\infty(\R^n)\times\R^{n+1}}\\&&\qquad=
\|v\|_{L^\infty(\R^n)}+\|\alpha\|_{\R^{n+1}}
\le C\,\Big( \|u\|_{X^s}+\|\beta\|_{\R^{n+1}}\Big).\end{eqnarray*}
This establishes~\eqref{goal} and in turn~\eqref{goal1},
and so it completes the proof of the desired result.
\end{proof}

\subsubsection{Proof of Lemma \ref{lemma0}}

Once we have studied in detail the operator $H$, we can prove Lemma \ref{lemma0}.
As we pointed out at the beginning of this subsection,
the idea is to do it by means of the Implicit Function Theorem.
For the sake of completeness, we write here the precise statement
of this theorem that we will use (see Theorem 2.3, page 38, of \cite{AP}).

\begin{theorem}[Implicit Function Theorem]\label{IFT}
Let $X, Y, Z$ be Banach spaces, and let~$\Lambda$ and~$U$ be
open sets of~$X$ and~$Y$ respectively. Let~$H\in C^1(\Lambda\times U,Z)$ and
suppose that~$H(\lambda^*,u^*)=0$ and~$\displaystyle \frac{\partial H}{\partial u}(\lambda^*,u^*)\in Inv(Y, Z)$.

Then there exist neighborhoods~$\Theta$ of~$\lambda^*$ in~$X$
and~$U^*$ of~$u^*$ in~$Y$, and a map~$g\in C^1(\Theta, Y)$ such that
\begin{itemize}
\item[a)] $H(\lambda, g(\lambda))=0$, for all $\lambda\in \Theta$.
\item[b)] $H(\lambda, u)=0$, with $(\lambda,u)\in \Theta\times U^*$, implies $u=g(\lambda)$.
\item[c)] $\displaystyle g'(\lambda)=-\left(\frac{\partial H}{\partial u}(p)\right)^{-1}\circ\frac{\partial H}{\partial\lambda}(p)$, where $p=(\lambda, g(\lambda))$ and $\lambda\in \Theta$.
\end{itemize}

\end{theorem}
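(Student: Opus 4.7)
The plan is to follow the classical contraction mapping approach in Banach spaces. By translating both arguments, one may assume without loss of generality that $\lambda^*=0$, $u^*=0$, $H(0,0)=0$, and that $L:=\frac{\partial H}{\partial u}(0,0)$ belongs to $Inv(Y,Z)$. The key reformulation is that $H(\lambda,u)=0$ is equivalent to the fixed point equation $u=\Phi_\lambda(u)$, where
\[
\Phi_\lambda(u):=u-L^{-1}H(\lambda,u).
\]
The whole proof then reduces to producing a unique fixed point of $\Phi_\lambda$ that depends $C^1$-smoothly on $\lambda$.

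First I would show that $\Phi_\lambda$ is a contraction on a small closed ball $\overline{B_r(0)}\subset Y$ for all $\lambda$ in a small neighborhood $\Theta$ of $0\in X$. The key observation is that $\frac{\partial\Phi_0}{\partial u}(0)=I_Y-L^{-1}L=0$, so the continuity of $\frac{\partial H}{\partial u}$ (which follows from $H\in C^1$) allows one to shrink $\Theta$ and $r$ in such a way that $\bigl\|\frac{\partial\Phi_\lambda}{\partial u}(u)\bigr\|\le 1/2$ for all $(\lambda,u)\in\Theta\times\overline{B_r(0)}$. The mean value inequality in Banach spaces then gives $\|\Phi_\lambda(u_1)-\Phi_\lambda(u_2)\|_Y\le\frac12\|u_1-u_2\|_Y$. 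Shrinking $\Theta$ further so that $\|\Phi_\lambda(0)\|_Y=\|L^{-1}H(\lambda,0)\|_Y\le r/2$ ensures that $\Phi_\lambda$ maps $\overline{B_r(0)}$ into itself, and the Banach fixed point theorem produces a unique $g(\lambda)\in\overline{B_r(0)}$ with $H(\lambda,g(\lambda))=0$, yielding items (a) and (b) with $U^*:=B_r(0)$. Continuity of $g$ follows from the standard estimate
\[
\|g(\lambda)-g(\lambda')\|_Y\le 2\,\|\Phi_\lambda(g(\lambda'))-\Phi_{\lambda'}(g(\lambda'))\|_Y,
\]
whose right-hand side vanishes as $\lambda\to\lambda'$ by continuity of $H$.

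For the $C^1$ regularity, the candidate derivative at $\lambda$ is
\[
A(\lambda):=-\Bigl(\tfrac{\partial H}{\partial u}(\lambda,g(\lambda))\Bigr)^{-1}\circ\tfrac{\partial H}{\partial\lambda}(\lambda,g(\lambda));
\]
here $\frac{\partial H}{\partial u}(\lambda,g(\lambda))$ remains invertible in a neighborhood of $(0,0)$, since $Inv(Y,Z)$ is open in $\mathcal{L}(Y,Z)$ (Neumann series) and $H\in C^1$. To verify differentiability, I would exploit $H(\lambda+h,g(\lambda+h))=0=H(\lambda,g(\lambda))$ and use the $C^1$ hypothesis to expand
\[
0=\tfrac{\partial H}{\partial\lambda}(p)\,h+\tfrac{\partial H}{\partial u}(p)\bigl(g(\lambda+h)-g(\lambda)\bigr)+o\bigl(\|h\|_X+\|g(\lambda+h)-g(\lambda)\|_Y\bigr),
\]
with $p=(\lambda,g(\lambda))$, and then solve for $g(\lambda+h)-g(\lambda)$ by applying $\bigl(\frac{\partial H}{\partial u}(p)\bigr)^{-1}$. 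Combining this with the Lipschitz estimate on $g$ extracted from the contraction constant, one can absorb the implicit error and obtain $g(\lambda+h)-g(\lambda)=A(\lambda)h+o(\|h\|_X)$, which gives both differentiability and formula (c). Continuity of $g'$ is then immediate from continuity of $g$, of the partial derivatives of $H$, and of the inversion map on the open set $Inv(Y,Z)\subset\mathcal{L}(Y,Z)$.

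The main obstacle is precisely the $C^1$ regularity rather than mere continuity of $g$: the above expansion only controls the error in terms of $\|h\|_X+\|g(\lambda+h)-g(\lambda)\|_Y$, so promoting this to $o(\|h\|_X)$ requires the quantitative Lipschitz bound on $g$ derived from the contraction constant. Once this bootstrap is in place, everything else reduces to standard Banach-space calculus.
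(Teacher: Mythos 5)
The paper does not prove this statement at all: it is quoted as a classical result, with a pointer to Theorem 2.3, page 38, of the Ambrosetti--Prodi book \cite{AP}, and is then only applied. Your contraction-mapping argument is correct and is essentially the standard proof found in that reference: the reformulation $u=\Phi_\lambda(u)$ with $\Phi_\lambda(u)=u-L^{-1}H(\lambda,u)$, the smallness of $\partial_u\Phi_\lambda$ near $(\lambda^*,u^*)$ plus the mean value inequality to get a uniform contraction, the Banach fixed point theorem for (a)--(b), and the expansion of $0=H(\lambda+h,g(\lambda+h))-H(\lambda,g(\lambda))$ combined with the Lipschitz bound on $g$ (and openness of $Inv(Y,Z)$ via Neumann series) for (c) and the continuity of $g'$. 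The only cosmetic point to tighten is that the fixed point lives a priori in the closed ball $\overline{B_r(0)}$, so one should shrink $\Theta$ to get $\|\Phi_\lambda(0)\|_Y<r/2$ strictly (or take $U^*$ slightly larger than the ball used for the contraction) so that $g(\lambda)$ lies in the open set $U^*$; this is routine and not a gap.
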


Now we conclude the proof of Lemma \ref{lemma0}.

\begin{proof}[Proof of Lemma \ref{lemma0}]
Consider $H$ defined in \eqref{defH}. First we observe that $H$ is $C^1$ with respect to $\mu$ and $\xi$.
Indeed, $z_{\mu,\xi}$ is $C^1$ with respect to $\mu$ and $\xi$.
Moreover, $J$ is linear and $A_\epsilon(z_{\mu,\xi}+w)$ is $C^1$
with respect to $z_{\mu,\xi}$ since $z_{\mu,\xi}+w$ is
bounded from zero
on the support of $h$
(recall \eqref{PP8}), therefore $H_1$ is $C^1$ with respect to $z_{\mu,\xi}$.

Also, $H$ is $C^1$ with respect to $\epsilon$ and $\alpha$,
since it depends linearly on these variables (recall that $J$ is linear
and $A_\epsilon$ is linear with respect to $\epsilon$).
Finally, $H$ is~$C^1$ with respect to $w$ thanks to Lemma \ref{C1w}.

Now we use the Implicit Function Theorem.
Indeed, we notice that
\begin{equation}\label{H1zero}
 H_1(\mu,\xi,0,0,0)=z_{\mu,\xi}-J(A_0(z_{\mu,\xi}))=z_{\mu,\xi}-J(z_{\mu,\xi}^p)=0,
\end{equation}
since $z_{\mu,\xi}$ is a solution to \eqref{entire} (recall also \eqref{EL-4}). Moreover,
\begin{equation}\label{H2zero}
 H_2(\mu,\xi,0,0,0)=0.
\end{equation}
In order to follow the notation of Theorem \ref{IFT}, we set
$$X:=\mathbb{R}\times \mathbb{R}^n\times \mathbb{R},\quad Y:=X^s\times \mathbb{R}^{n+1},\quad Z:= X^s\times\mathbb{R}^{n+1},$$
$$\Lambda:=(\mu_1,\mu_2)\times B_R\times\mathbb{R}, \quad U:=V\times\mathbb{R}^{n+1},$$
and
$$\lambda^*:=(\mu,\xi,0),\quad u^*:=(0,0),\quad u:=(w,\alpha).$$
Thus, we have proved that
\begin{itemize}
\item[(i)] $H\in C^1(\Lambda\times U,Z)$, by the linear dependance of the variables and Lemma~\ref{C1w};
\item[(ii)] $H(\lambda^*,u^*)=0$, by~\eqref{H1zero} and \eqref{H2zero};
\item[(iii)] $\displaystyle \frac{\partial H}{\partial u}(\lambda^*,u^*)\in Inv(Y, Z)$,
by \eqref{OP}, \eqref{OP secondo} and Proposition~\ref{invertibility}.
\end{itemize}
\noindent Notice here that, since $V$ was defined as
$$V:= \{w\in X^s {\mbox{ s.t }} \|w\|_{X^s}<a/2\}, $$
it is an open subset of $X^s$.
Therefore,
all the hypotheses of the Implicit Function Theorem are satisfied, and we conclude the existence of a nontrivial
$w\in X^s$ solution to \eqref{f6v7gb8idsfvvcwqw}, that is, there exists $w\in X^s\cap \left(T_{z_{\mu,\xi}}Z_0\right)^{\perp}$
that solves the auxiliary equation in \eqref{AUX}. Furthermore, since $H$ is of class
$C^1$ with respect to $\epsilon$, $\mu$ and $\xi$ in $X^s$,
we deduce that so is $w$.

Now we focus on the proof of~\eqref{est}. We observe that
\begin{equation}\label{der w eps alpha}
\left\| \frac{\partial (w,\alpha)}{\partial\eps}\right\|_{X^s \times \R^{n+1}} \le C.
\end{equation}
Indeed, we write
\begin{equation}\label{67uiddasdf}
H \big(\mu,\,\xi,\, w(\eps,z_{\mu,\xi}),\,\eps,\,\alpha(\eps,z_{\mu,\xi})\big)=0,
\end{equation}
we differentiate with respect to $\eps$ and we set $\eps:=0$.
Notice tha
we are using the order of the variables of $H$ as given in \eqref{defH}.

Since
\begin{equation}\label{6yufjgyuikxmmncmw}
{\mbox{$w(0,z_{\mu,\xi})=0$ and $\alpha(0,z_{\mu,\xi})=0$,}}\end{equation}
we obtain that
$$ \frac{\partial H}{\partial \eps} (\mu,\xi,0,0,0)
+\frac{\partial H}{\partial (w,\alpha)}(\mu,\xi,0,0,0)
\frac{\partial (w,\alpha)}{\partial \eps} (0,z_{\mu,\xi})=0.$$
Therefore, using the invertibility assumption,
we get that
$$ \frac{\partial (w,\alpha)}{\partial \eps} (0,z_{\mu,\xi})
=-
\left(\frac{\partial H}{\partial (w,\alpha)}(\mu,\xi,0,0,0)\right)^{-1}
\frac{\partial H}{\partial \eps} (\mu,\xi,0,0,0),$$
and so, since $H$ is $C^1$ with respect to $X^s$,
$$
\left\|\frac{\partial (w,\alpha)}{\partial \eps} 
(0,z_{\mu,\xi})\right\|_{X^s \times \R^{n+1}}\le C.$$
Then, since $(w,\alpha)$ is $C^1$ in $\eps$, in virtue of the Implicit Function
Theorem, we obtain 
$$ \left\|\frac{\partial (w,\alpha)}{\partial \eps} 
(\eps,z_{\mu,\xi})\right\|_{X^s \times \R^{n+1}}\le C+
\left\|\frac{\partial (w,\alpha)}{\partial \eps} 
(0,z_{\mu,\xi})\right\|_{X^s \times \R^{n+1}}\le C,$$
up to renaming $C>0$, and this gives
\eqref{der w eps alpha}.

{F}rom \eqref{der w eps alpha} and \eqref{6yufjgyuikxmmncmw} we obtain that
\begin{equation*}
\| (w,\alpha)\|_{X^s \times \R^{n+1}}\le C\eps,
\end{equation*}
and this implies the first estimate in \eqref{est}.

Now we prove the second and third estimates in \eqref{est}.
In this case, we will see that the roles of $\mu$ and $\xi$ are basically the same:
for this, we write $\varpi\in\R$ for any of the variables $(\mu,\xi)\in\R^{n+1}$
and we use the linearized equation to see that
$$ (-\Delta)^s \frac{\partial z_{\mu,\xi}}{\partial\varpi}=p z_{\mu,\xi}^{p-1}
\frac{\partial z_{\mu,\xi}}{\partial\varpi}.$$
This information can be written as
\begin{equation*}
\frac{\partial H}{\partial\varpi} (\mu,\xi,0,0,0)=0.
\end{equation*}
Now we take derivatives of \eqref{67uiddasdf}
with respect to $\varpi$ and we set $\eps:=0$.
Recalling \eqref{6yufjgyuikxmmncmw} we obtain that
\begin{eqnarray*}
0 &=& \frac{\partial H}{\partial \varpi}(\mu,\xi,0,0,0)
+\frac{\partial H}{\partial (w,\alpha)}(\mu,\xi,0,0,0)
\frac{\partial (w,\alpha)}{\partial \varpi}(0,z_{\mu,\xi})\\
&=&\frac{\partial H}{\partial (w,\alpha)}(\mu,\xi,0,0,0)
\frac{\partial (w,\alpha)}{\partial \varpi}(0,z_{\mu,\xi}).
\end{eqnarray*}
Hence, from the invertibility condition, we conclude that
$$ \frac{\partial (w,\alpha)}{\partial \varpi}(0,z_{\mu,\xi})=0.$$
Since $(w,\alpha)$ are $C^1$ in $\eps$, we obtain that
$$ \lim_{\eps\to0}
\left\|\frac{\partial (w,\alpha)}{\partial \varpi}(\eps,z_{\mu,\xi})\right\|_{X^s \times \R^{n+1}}=0.$$
This gives the second and third claim in \eqref{est}
and completes the proof of Lemma \ref{lemma0}.
\end{proof}

\subsection{Finite-dimensional reduction}
Up to this point, we have found a function $w$ so that $z_{\mu,\xi}+w$
satisfies our problem in the weak sense,
when we test with functions $\varphi\in (T_{z_{\mu,\xi}}Z_0)^\perp\cap X^s$.
The following result states that actually the equation is satisfied
for every test function in $X^s$,
i.e. that $z_{\mu,\xi}+w$ is a solution to \eqref{problem}.

Indeed, consider the reduced functional $\Phi_\varepsilon:Z_0\rightarrow \mathbb{R}$, defined by
\begin{equation*}
\Phi_\varepsilon(z):=f_\varepsilon(z+w),
\end{equation*}
where $w=w(\varepsilon,z)$ is provided by Lemma \ref{lemma0}.
\begin{prop}
Suppose that $\Phi_\varepsilon$ has a critical point $z_{\mu^\varepsilon,\xi^\varepsilon}\in Z_0$ for $\varepsilon$ small enough. Thus, $z_{\mu^\varepsilon,\xi^\varepsilon}+w$ is a critical point of $f_\varepsilon$, where $w=w(\varepsilon,z_{\mu^\varepsilon,\varepsilon^\varepsilon})\in (T_{z_{\mu^\varepsilon,\xi^\varepsilon}}Z_0)^\perp$ is provided by Lemma \ref{lemma0}.
\end{prop}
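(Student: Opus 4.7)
The plan is to show that the only obstruction to $u_\varepsilon := z_{\mu^\varepsilon,\xi^\varepsilon}+w$ being a critical point of $f_\varepsilon$ lies in the $(n+1)$-dimensional direction $T_{z_{\mu^\varepsilon,\xi^\varepsilon}}Z_0$, and then use the critical point condition on the reduced functional $\Phi_\varepsilon$ to kill this obstruction via a perturbation of a diagonal linear system whose invertibility is controlled by Lemma~\ref{eigen}.

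First, I would extract a Lagrange-multiplier representation for $f_\varepsilon'(u_\varepsilon)$. By Lemma~\ref{lemma0}, $w\in(T_{z_{\mu^\varepsilon,\xi^\varepsilon}}Z_0)^\perp\cap X^s$ satisfies the auxiliary equation~\eqref{AUX}, which is precisely the condition $H_1(\mu^\varepsilon,\xi^\varepsilon,w,\varepsilon,\alpha)=0$ for some $\alpha=(\alpha_1,\dots,\alpha_{n+1})\in\mathbb{R}^{n+1}$. Applying $(-\Delta)^s$ (via Theorem~\ref{THABC}) together with \eqref{EQqj} gives $(-\Delta)^s u_\varepsilon - A_\varepsilon(u_\varepsilon) = p\sum_{i=1}^{n+1}\alpha_i z_{\mu^\varepsilon,\xi^\varepsilon}^{p-1}q_i$, which reads, for every $\varphi\in X^s$,
\begin{equation*}
f_\varepsilon'(u_\varepsilon)[\varphi] = \sum_{i=1}^{n+1}\alpha_i\,\langle q_i,\varphi\rangle.
\end{equation*}
In particular $f_\varepsilon'(u_\varepsilon)$ already vanishes on $(T_{z_{\mu^\varepsilon,\xi^\varepsilon}}Z_0)^\perp\cap X^s$; the goal reduces to proving $\alpha=0$.

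Second, I would translate the critical-point condition on $\Phi_\varepsilon$ into a linear system on $\alpha$. Writing $\nu_k=\xi_k$ for $k=1,\dots,n$ and $\nu_{n+1}=\mu$, we have $q_k=\partial_{\nu_k}z_{\mu,\xi}$, and Lemma~\ref{lemma0} ensures that $w$ is $C^1$ in $(\mu,\xi)$ with values in $X^s$. Differentiating $\Phi_\varepsilon(\mu,\xi)=f_\varepsilon(z_{\mu,\xi}+w(\varepsilon,\mu,\xi))$ at the critical point and using the representation above yields, for each $k\in\{1,\dots,n+1\}$,
\begin{equation*}
0 = \partial_{\nu_k}\Phi_\varepsilon = \sum_{i=1}^{n+1}\alpha_i\,\bigl\langle q_i,\,q_k+\partial_{\nu_k}w\bigr\rangle.
\end{equation*}
By Lemma~\ref{eigen}, $\langle q_i,q_k\rangle=\lambda_k\delta_{ik}$. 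To handle the remainder, I would differentiate the orthogonality relation $\langle w,q_i\rangle=0$ in $\nu_k$, obtaining
\begin{equation*}
\langle \partial_{\nu_k}w,q_i\rangle = -\langle w,\partial_{\nu_k}q_i\rangle,
\end{equation*}
and observe that the right-hand side is $O(\varepsilon)$, since $\|w\|_{X^s}\le C\varepsilon$ by the first estimate in \eqref{est} (while $\partial_{\nu_k}q_i$ is uniformly bounded in $\dot H^s(\mathbb{R}^n)$ for $(\mu,\xi)$ in the compact parameter region defining $Z_0$).

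Third, the system collapses to
\begin{equation*}
\lambda_k\alpha_k + \sum_{i=1}^{n+1}\alpha_i\,\varepsilon\,r_{ik}(\varepsilon)=0,\qquad k=1,\dots,n+1,
\end{equation*}
with $|r_{ik}(\varepsilon)|\le C$ uniformly. Since $\inf\lambda_k>0$ by Lemma~\ref{eigen}, the matrix of this system is a small perturbation of the positive diagonal matrix $\mathrm{diag}(\lambda_1,\dots,\lambda_{n+1})$ and is therefore invertible for $\varepsilon$ small. Hence $\alpha=0$, which combined with the Lagrange-multiplier formula above gives $f_\varepsilon'(u_\varepsilon)\equiv 0$ on $X^s$, as required.

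The main obstacle is the second step: one must justify that $\partial_{\nu_k}w$ makes sense as an element producing legitimate test directions in the $X^s$-pairing, and that the remainder $\langle w,\partial_{\nu_k}q_i\rangle$ is genuinely $O(\varepsilon)$ uniformly on $Z_0$. Both points are delivered by the $C^1$-regularity and quantitative bounds \eqref{est} in Lemma~\ref{lemma0}; once these are in hand, the rest is linear algebra powered by the nondegeneracy encoded in Lemma~\ref{eigen}.
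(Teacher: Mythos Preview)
Your proof is correct and follows essentially the same route as the paper: derive the Lagrange-multiplier representation $f_\varepsilon'(u_\varepsilon)[\varphi]=\sum_i\alpha_i\langle q_i,\varphi\rangle$ from $H_1=0$, differentiate $\Phi_\varepsilon$ in the parameters to obtain the linear system $\lambda_k\alpha_k+\sum_i\alpha_i\langle q_i,\partial_{\nu_k}w\rangle=0$, and conclude $\alpha=0$ by invertibility of a small perturbation of $\mathrm{diag}(\lambda_1,\dots,\lambda_{n+1})$.

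The one genuine difference is how you bound the remainder $\langle q_i,\partial_{\nu_k}w\rangle$. The paper applies Cauchy--Schwarz directly and invokes the derivative estimates in \eqref{est}, namely $\|\partial_{\nu_k}w\|_{X^s}\to 0$. You instead differentiate the constraint $\langle w,q_i\rangle=0$ to rewrite $\langle q_i,\partial_{\nu_k}w\rangle=-\langle w,\partial_{\nu_k}q_i\rangle$ and then use only the first bound $\|w\|_{X^s}\le C\varepsilon$ together with uniform $\dot H^s$ bounds on the second parameter-derivatives $\partial_{\nu_k}q_i$. Both arguments are valid; yours has the mild advantage of needing only the size estimate on $w$ rather than the $o(1)$ control on its parameter-derivatives, at the cost of requiring one more derivative of $z_{\mu,\xi}$ in the parameters (which is harmless here since $z_{\mu,\xi}$ is explicit and smooth in $(\mu,\xi)$ on the compact range).
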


\begin{proof}
For simplicity, we will denote $\mu:=\mu^{\varepsilon}$ and $\xi:=\xi^{\varepsilon}$,
and thus $z_{\mu,\xi}:=z_{\mu^{\varepsilon},\xi^{\varepsilon}}$.
Since $z_{\mu,\xi}$ is a critical point of $\Phi_\varepsilon$,
we know that there exists $\varepsilon_0>0$
such that for every $0<\varepsilon<\varepsilon_0$ and every $\varphi\in (T_{z_{\mu,\xi}}Z_0)\cap X^s$ it holds
\begin{equation}\label{PartialPhi}
\displaystyle\frac{d}{dt}\Phi_\varepsilon(\psi(t))\bigg|_{t=0}=0,
\end{equation}
where $\psi:[0,1]\rightarrow Z_0$ is a curve in $Z_0$ such that $\psi(0)=z_{\mu,\xi}$ and $\psi'(0)=\varphi$. Recalling the definition of $\Phi_\varepsilon$, we observe that
\begin{equation*}\begin{split}
\displaystyle \frac{d}{dt}\Phi_\varepsilon&(\psi(t))\bigg|_{t=0}
=\frac{d}{dt}f_\varepsilon(\psi(t)+w(\varepsilon, \psi(t)))\bigg|_{t=0}
\\ =&\,\frac{d}{dt}\left[f_\varepsilon(\psi(0)+w(\varepsilon, \psi(0))+t\cdot f_\varepsilon'(\psi(0)+w(\varepsilon, \psi(0)))\left(\psi'(0)+\frac{\partial w}{\partial z_{\mu,\xi}}\psi'(0)\right)\right]_{t=0}
\\ =&\,f_\varepsilon'(z_{\mu,\xi}+w(\varepsilon,z_{\mu,\xi}))\left(\varphi+\frac{\partial w}{\partial z_{\mu,\xi}}\varphi\right),
\end{split}\end{equation*}
and hence \eqref{PartialPhi} is equivalent to
\begin{equation}\label{criticalPointPhi}\begin{split}
&\displaystyle\iint_{\R^{2n}}{\frac{\big( (z_{\mu,\xi}+w)(x)-(z_{\mu,\xi}+w)(y)\big)\,\big(
(\varphi+\frac{\partial w}{\partial z_{\mu,\xi}}\varphi)(x)-(\varphi+\frac{\partial w}{\partial z_{\mu,\xi}} \varphi)(y)\big)}{|x-y|^{n+2s}}\,dx\,dy}
\\ &\qquad\,=\, \int_{\R^n} \Big( \epsilon h(x) \big( z_{\mu,\xi}(x)+w(x)\big)^q
+\big( z_{\mu,\xi}(x)+w(x)\big)^p \Big)
\,(\varphi+\frac{\partial w}{\partial z_{\mu,\xi}}\varphi)(x)\,dx,
\end{split}\end{equation}
for any $\varphi\in \left(T_{z_{\mu,\xi}}Z_0\right)\cap X^s$.

Moreover, since $w$ solves \eqref{f6v7gb8idsfvvcwqw}, $H_1(\mu,\xi,w,\varepsilon,\alpha)=0$ is equivalent to affirm that
\begin{equation}\label{fPrimeTangent}\begin{split}
&\iint_{\R^{2n}} \frac{\big( (z_{\mu,\xi}+w)(x)-(z_{\mu,\xi}+w)(y)\big)\,\big(
\phi(x)-\phi(y)\big)}{|x-y|^{n+2s}}\,dx\,dy
\\ &\qquad\,-\, \int_{\R^n} \Big( \epsilon h(x) \big( z_{\mu,\xi}(x)+w(x)\big)^q
+\big( z_{\mu,\xi}(x)+w(x)\big)^p \Big)
\,\phi(x)\,dx,
\\ &\qquad\,=\, \sum_{i=1}^{n+1} {\alpha_i}\iint_{\R^{2n}} \frac{\big( q_i(x)-q_i(y)\big)\,\big(
\phi(x)-\phi(y)\big)}{|x-y|^{n+2s}}\,dx\,dy,
\end{split}\end{equation}
for any $\phi\in  X^s$.

Consider now $q_j\in T_{z_{\mu,\xi}}Z_0$ defined in \eqref{qj}.
Thus, taking $\varphi:={q_j}$ in \eqref{criticalPointPhi} and applying
\eqref{fPrimeTangent} with 
$\phi:=q_j+\frac{\partial w}{\partial z_{\mu,\xi}}q_j\in X^s$, we obtain
\begin{equation}\label{linearSystem}\begin{split}
&\displaystyle 0 \,=\, \sum_{i=1}^{n+1} {\alpha_i}\iint_{\R^{2n}} \frac{\big( q_i(x)-q_i(y)\big)\,\big(
({q_j}+\frac{\partial w}{\partial z_{\mu,\xi}}{q_j})(x)-({q_j}+\frac{\partial w}{\partial z_{\mu,\xi}} {q_j})(y)\big)}{|x-y|^{n+2s}}\,dx\,dy\\
&\qquad \,=\, \sum_{i=1}^{n+1} {\alpha_i}\langle q_i,{q_j}\rangle + \sum_{i=1}^{n+1}{\alpha_i}\langle q_i,\frac{\partial w}{\partial z_{\mu,\xi}} {q_j}\rangle\\
&\qquad \,=\, {\lambda_j \alpha_j}+\sum_{i=1}^{n+1}
{\alpha_i}\langle q_i, \frac{\partial w}{\partial z_{\mu,\xi}}{q_j}\rangle,
\end{split}\end{equation}
where Lemma~\ref{eigen} was also used in the last line.

Set now the $(n+1)\times(n+1)$ matrix $B^\varepsilon=(b_{ij}^\varepsilon)$, defined as
\begin{eqnarray*}
b_{ij}^\varepsilon &:=& \langle q_i,\frac{\partial w}{\partial\xi_j}\rangle,\quad i=1,\ldots,n+1,\,\,j=1,\ldots,n,\\
b_{i,n+1}^\varepsilon &:=&\langle q_i,\frac{\partial w}{\partial\mu}\rangle,\quad i=1,\ldots,n+1.
\end{eqnarray*}
By Cauchy-Schwartz inequality and \eqref{est} one has
\begin{equation}\label{DerivLimit}
\lim_{\varepsilon\rightarrow 0}{\langle q_i, \frac{\partial w}{\partial\xi_j}\rangle}=\lim_{\varepsilon\rightarrow 0}{\langle q_i,\frac{\partial w}{\partial\mu}\rangle}=0,\quad i=1,\ldots,n+1,\,\,j=1,\ldots,n,
\end{equation}
and thus $\displaystyle\lim_{\varepsilon\rightarrow 0}\|B^\varepsilon\|=0$. Recalling that
$$\displaystyle\frac{\partial w}{\partial z_{\mu,\xi}} q_j=\frac{\partial w}{\partial z_{\mu,\xi}}\frac{\partial z_{\mu,\xi}}{\partial\xi_j}=\frac{\partial}{\partial \xi_j}w(\varepsilon,z_{\mu,\xi})=\frac{\partial w}{\partial\xi_j}\hbox{ for }j=1,\ldots, n$$
and
$$\displaystyle\frac{\partial w}{\partial z_{\mu,\xi}} q_{n+1}=\frac{\partial w}{\partial z_{\mu,\xi}} \frac{\partial z_{\mu,\xi}}{\partial\mu}=\frac{\partial}{\partial \mu}w(\varepsilon,z_{\mu,\xi})=\frac{\partial w}{\partial\mu},$$
equation \eqref{linearSystem} becomes
$$\lambda_j \alpha_j+\sum_{i=1}^{n+1}
\alpha_i b_{ij}^\varepsilon =0,\quad i,j=1,\ldots,n+1,$$
that is nothing but a $(n+1)\times(n+1)$ linear system
with associated matrix $\lambda\,Id_{\mathbb{R}^{n+1}}+B^\varepsilon$,
whose entries are $\lambda_j\delta_{ij}+b_{ij}^\varepsilon$,
where $\delta_{jj}=1$ and $\delta_{ij}=0$ whether $i\neq j$.
Thus, since $\displaystyle\lim_{\varepsilon\rightarrow 0}\|B^\varepsilon\|=0$,
there exists $\varepsilon_1>0$ such that for $\varepsilon<\varepsilon_1$
the matrix $\lambda\,Id_{\mathbb{R}^{n+1}}+B^\varepsilon$ is invertible, and therefore $\alpha_i=0$ for every $i=1,\ldots,n+1$.
Hence, coming back to \eqref{fPrimeTangent}, we get
\begin{equation*}\begin{split}
&\iint_{\R^{2n}} \frac{\big( (z_{\mu,\xi}+w)(x)-(z_{\mu,\xi}+w)(y)\big)\,\big(
\phi(x)-\phi(y)\big)}{|x-y|^{n+2s}}\,dx\,dy
\\ &\qquad\,=\, \int_{\R^n} \Big( \epsilon h(x) \big( z_{\mu,\xi}(x)+w(x)\big)^q
+\big( z_{\mu,\xi}(x)+w(x)\big)^p \Big)
\,\phi(x)\,dx,
\end{split}\end{equation*}
for every $\phi\in  X^s$, that is, $z_{\mu,\xi}+w$ is a critical point of $f_\varepsilon$.
\end{proof}

\section{Study of the behavior of $\Gamma$}\label{sec:gamma}

At this point, we have reduced our original problem to a finite-dimensional one.
Indeed, we define the perturbed manifold
$$ Z_\epsilon:= \{u:=z_{\mu,\xi}+w(\epsilon, z_{\mu,\xi}) {\mbox{ s.t. }} z_{\mu,\xi}\in Z_0\},$$
which is a natural constraint for the functional $f_\epsilon$.

We recall that $G$ and $U$
have been defined in~\eqref{Gu}
and~\eqref{defU}, respectively, and we give the following
\begin{defn}
We say that $u\in U$ is a proper local maximum (or minimum, respectively) of $G$ if
there exists a neighborhood $\mathcal{U}$ of $u$ such that
$$ G(u)\geq G(v)\;\;\forall\,v\in\mathcal{U}\qquad \quad(G(u)\leq G(v)\;\;\forall\,v\in\mathcal{U},
{\mbox{ respectively}}),$$
and
$$ G(u)>\sup_{v\in\partial\mathcal{U}}G(v) \qquad\quad (G(u)<\inf_{v\in\partial\mathcal{U}}G(v),
{\mbox{ respectively}}).$$
\end{defn}

With this, one can prove that:
\begin{prop}\label{prop1}
Suppose that $z_{\mu,\xi}\in Z_0$ is a proper local maximum or minimum of $G$.
Then, for $\epsilon>0$ sufficiently small,
$u_\epsilon:=z_{\mu,\xi}+w(\epsilon, z_{\mu,\xi})\in Z_\epsilon$ is a critical point
of $f_\epsilon$.
\end{prop}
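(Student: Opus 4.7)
The plan is to regard $\Phi_\eps$ as a smooth function of the finite-dimensional parameters $(\mu,\xi)\in(\mu_1,\mu_2)\times B_R$ and to produce a critical point inside the neighborhood supplied by the proper local max/min hypothesis on $G$. By the preceding proposition, any critical point of $\Phi_\eps$ on $Z_0$ lifts to a critical point of $f_\eps$ on $X^s$, which is the conclusion we seek.

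The key step is to derive a uniform expansion of the form
$$ \Phi_\eps(z_{\mu,\xi}) \,=\, c_0 \,-\, \eps\,\Gamma(\mu,\xi) \,+\, O(\eps^2),$$
for $(\mu,\xi)$ ranging over a compact subset of $(\mu_1,\mu_2)\times B_R$, where $c_0:=f_0(z_{\mu,\xi})$ is the constant value that $f_0$ takes along $Z_0$ (all the $z_{\mu,\xi}$ are Sobolev extremals, and both the seminorm $[\cdot]_{\dot H^s(\R^n)}$ and the $L^{\CRES}(\R^n)$ norm are invariant under the rescaling in \eqref{sol}). To obtain the expansion I would write
$$ \Phi_\eps(z_{\mu,\xi}) \,=\, f_0(z_{\mu,\xi}+w) \,-\, \eps\,G(z_{\mu,\xi}+w),$$
Taylor expand $f_0$ at $z_{\mu,\xi}$, and kill the first-order contribution using that $z_{\mu,\xi}$ is a critical point of $f_0$; the quadratic remainder is dominated by $C\|w\|_{X^s}^2\le C\eps^2$ by Lemma~\ref{lemma0}, using also that $f_0$ is genuinely $C^2$ in a neighborhood of $z_{\mu,\xi}$ (since $z_{\mu,\xi}>0$ everywhere the positive-part cutoff can be dropped locally). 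For $G$, the uniform bound $u>a/2$ on $\omega=\mathrm{supp}\,h$ from \eqref{PP8} makes $u\mapsto u^{q+1}$ Lipschitz on the relevant range, so $|G(z_{\mu,\xi}+w)-G(z_{\mu,\xi})|\le C\|w\|_{X^s}\le C\eps$, and multiplication by $\eps$ contributes only $O(\eps^2)$.

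Once the expansion is in hand, the hypothesis is used directly. Assume, for concreteness, that $z_{\mu,\xi}$ is a proper local maximum of $G$, with an associated neighborhood $\mathcal{U}$ and strict gap $\delta:=G(z_{\mu,\xi})-\sup_{\partial\mathcal{U}}G>0$. Combining the expansion at the center and at boundary points yields
$$ \Phi_\eps(z_{\mu,\xi}) \,-\, \Phi_\eps(z) \,\le\, -\eps\,\delta \,+\, C\eps^2 \,<\,0 \quad \text{for every } z\in\partial\mathcal{U},$$
once $\eps$ is small enough. Hence $\Phi_\eps$ attains its minimum over $\overline{\mathcal{U}}$ at an interior point of $\mathcal{U}$, which is a critical point of $\Phi_\eps$ on $Z_0$ (note that $\Phi_\eps$ is $C^1$ in $(\mu,\xi)$, because $f_\eps$ is $C^2$ on $U$ and $w$ is $C^1$ in the parameters by Lemma~\ref{lemma0}). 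The case of a proper local minimum of $G$ is symmetric: there $\Phi_\eps$ develops an interior maximum in $\mathcal{U}$. Invoking the preceding proposition closes the argument.

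The main obstacle is establishing the expansion uniformly on compact subsets of $Z_0$, so that the $O(\eps^2)$ remainder does not swallow the $O(\eps)$ variation of $\Gamma$. This is a matter of combining $\|w\|_{X^s}\le C\eps$ with the uniform lower bound $z_{\mu,\xi}\ge a>0$ on $\omega$ in the Taylor expansion of $f_0$ and $G$; both ingredients are robust thanks to the preparatory work in Section~\ref{qless1}, but they require care precisely because the subcritical term $u^{q+1}$ is only $C^1$ (not $C^2$) near $u=0$ when $q<1$, which is exactly why the pointwise separation \eqref{PP8} was engineered into the functional framework.
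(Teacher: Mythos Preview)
Your argument is correct and follows essentially the same route as the paper, which defers the full proof to \cite{AM} (Theorem~2.16) and only sketches the expansion $f_\eps(z_{\mu,\xi}+w)=f_0(z_0)-\eps\,G(z_{\mu,\xi})+o(\eps)$; you supply the details of that sketch, including the uniformity of the remainder and the explicit use of the strict gap $\delta$ to locate an interior extremum of $\Phi_\eps$. One small remark: the statement as written suggests that the critical point is literally $z_{\mu,\xi}+w(\eps,z_{\mu,\xi})$, whereas your argument (and the standard one) produces a critical point at some $z_{\mu^\eps,\xi^\eps}\in\mathcal U$ converging to $z_{\mu,\xi}$ as $\eps\to0$; this is the correct conclusion and is what the paper actually uses in Section~\ref{sec:proof} and in the formulation of Theorems~\ref{TH1}--\ref{TH1BIS}.
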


The proof of this can be found for instance in \cite{AM}
(see in particular Theorem 2.16 there).
A simple explanation goes as follows.
First we notice that, for any $z_{\mu,\xi}\in Z_0$,
\begin{equation}\label{zero}
f'_0(z_{\mu,\xi})=0,
\end{equation}
where $f_0$ is defined in \eqref{fzero}. Indeed, $z_{\mu,\xi}$ is a critical point
of $f_0$, being a solution to \eqref{entire}.
Now, recalling \eqref{perturbed} and
using Taylor expansion in the vicinity of $z_{\mu,\xi}$, we have
\begin{eqnarray*}
f_\epsilon(z_{\mu,\xi}+ w) &=& f_0(z_{\mu,\xi}+w)-\epsilon\,G(z_{\mu,\xi}+w) \\
&=& f_0(z_{\mu,\xi})+ f'_0(z_{\mu,\xi})\,w + o(\|w\|_{X^s}) -\epsilon\,G(z_{\mu,\xi})
-\epsilon\, G'(z_{\mu,\xi})\,w + o(\epsilon)\\
&=& f_0(z_{\mu,\xi}) -\epsilon\, G(z_{\mu,\xi}) +o(\epsilon)\\
&=& f_0(z_0) -\epsilon\, G(z_{\mu,\xi}) +o(\epsilon),
\end{eqnarray*}
where we have used \eqref{zero} and \eqref{est}, and the translation and dilation invariance of $f_0$.

Therefore, we have reduced our problem to find critical points of $G$.
For this, we set
\begin{equation}\label{gamma}
\Gamma(\mu,\xi):= G(z_{\mu,\xi})= \frac{\mu^{-\gamma_s}}{q+1}\,
\int_{\R^n}h(x) z_0^{q+1}\left(\frac{x-\xi}{\mu}\right)\,dx,
\end{equation}
where
\begin{equation}\label{r00}
\gamma_s:=\frac{(n-2s)(q+1)}{2}.
\end{equation}

Now we prove some lemma concerning the behavior of $\Gamma$.
In the first one we compute the limit of $\Gamma$ as $\mu$ tends to zero.

\begin{lemma}\label{lemma1}
Let $\Gamma$ be as in \eqref{gamma}. Then
$$ \lim_{\mu\rightarrow 0}\Gamma(\mu,\xi)=0 \ {\mbox{ uniformly in }}\xi. $$
\end{lemma}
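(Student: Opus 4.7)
First I would use the explicit formula $z_0(y)=\alpha_{n,s}(1+|y|^2)^{-(n-2s)/2}$ to rewrite the integrand in a scaled form. A direct computation gives
$$z_0^{q+1}\!\left(\frac{x-\xi}{\mu}\right)=\alpha_{n,s}^{q+1}\,\frac{\mu^{2\gamma_s}}{(\mu^2+|x-\xi|^2)^{\gamma_s}},$$
and therefore
$$\Gamma(\mu,\xi)=\frac{\alpha_{n,s}^{q+1}\,\mu^{\gamma_s}}{q+1}\int_{\R^n}\frac{h(x)}{(\mu^2+|x-\xi|^2)^{\gamma_s}}\,dx.$$
Since $\gamma_s>0$ and $h$ is bounded and supported in the compact set $\omega$, the conclusion will follow once I show that $\int_\omega(\mu^2+|x-\xi|^2)^{-\gamma_s}\,dx$ grows strictly slower than $\mu^{-\gamma_s}$ as $\mu\to 0$, uniformly in $\xi\in\R^n$.

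Next I would split into two regimes depending on whether $\xi$ is near $\omega$ or not. Fix $R_0>0$ with $\omega\subset B_{R_0}(0)$. If $|\xi|\geq R_0+1$, then $|x-\xi|\geq 1$ for every $x\in\omega$, so the integrand is bounded by $1$ on $\omega$ (for $\mu\le 1$), and consequently $|\Gamma(\mu,\xi)|\le C\mu^{\gamma_s}\to 0$ uniformly in $\xi$ in this range. In the complementary regime $|\xi|\le R_0+1$ one has $\omega-\xi\subset B_{2R_0+1}(0)$; translating and passing to polar coordinates (together with the rescaling $r=\mu t$) one obtains
$$\int_\omega\frac{dx}{(\mu^2+|x-\xi|^2)^{\gamma_s}}\leq c_n\,\mu^{n-2\gamma_s}\int_0^{(2R_0+1)/\mu}\frac{t^{n-1}}{(1+t^2)^{\gamma_s}}\,dt.$$

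The remaining one-dimensional integral I would estimate by distinguishing three subcases according to the size of $\gamma_s$. If $\gamma_s>n/2$ it converges to a finite number as $\mu\to 0$, and after multiplication by $\mu^{\gamma_s}\cdot\mu^{n-2\gamma_s}=\mu^{n-\gamma_s}$ one gets $|\Gamma(\mu,\xi)|=O(\mu^{n-\gamma_s})$; if $\gamma_s=n/2$ the integral grows like $\log(1/\mu)$, giving $|\Gamma(\mu,\xi)|=O(\mu^{n/2}\log(1/\mu))$; and if $\gamma_s<n/2$ it grows like $\mu^{-(n-2\gamma_s)}$, yielding $|\Gamma(\mu,\xi)|=O(\mu^{\gamma_s})$. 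In each of the three cases the bound tends to $0$ as $\mu\to 0$, and the bound depends only on $R_0$, $n$, $s$, $q$ and $\|h\|_{\infty}$, which gives the claimed uniformity in $\xi$. Here one crucially uses $\gamma_s<n$, which is exactly equivalent to the standing assumption $q<p$.

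The main technical obstacle I anticipate is precisely the range $\gamma_s\le n/2$, i.e.\ $q\le 2s/(n-2s)$: there $z_0^{q+1}$ fails to be integrable over $\R^n$, so the natural substitution $y=(x-\xi)/\mu$, which formally gives $\Gamma(\mu,\xi)=\tfrac{\mu^{n-\gamma_s}}{q+1}\int h(\xi+\mu y)\,z_0^{q+1}(y)\,dy$, does not produce a convergent integral, and a pure dominated-convergence argument is unavailable. Keeping the unrescaled form $(\mu^2+|x-\xi|^2)^{-\gamma_s}$ is what lets one balance explicitly the mild singularity developing at $x=\xi$ against the prefactor $\mu^{\gamma_s}$, as quantified by the three-case analysis above.
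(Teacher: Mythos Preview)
Your proof is correct and follows essentially the same strategy as the paper: split into the far regime (where $\xi$ is at positive distance from $\omega$, so $|x-\xi|$ is bounded below on the support of $h$) and the near regime, and in the near regime estimate $\int_{B_\rho}(\mu^2+|y|^2)^{-\gamma_s}\,dy$ directly. The paper organizes the near-regime estimate by splitting the integration domain into $\{|x-\xi|\le\mu\}$ and $\{\mu<|x-\xi|<3r\}$, arriving at the bound $C\mu^{\min\{n,2\gamma_s\}}$; your polar-coordinate substitution $r=\mu t$ with the three-case analysis on $\gamma_s$ versus $n/2$ is an equivalent (and slightly more transparent) way to reach the same conclusion, and it has the small advantage of handling the borderline $\gamma_s=n/2$ explicitly.
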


\begin{proof} Thanks to \eqref{h2}, there exists $r>1$ such that
\begin{equation}\label{supp}
\omega=supp\, h\subset B_r.
\end{equation}
We first suppose that $\xi\in\R^n$ is such that $|\xi|\ge 2r$.
Therefore, if $|y|<r$ then
$$ |\xi +y|\ge |\xi|-|y|>r,$$
and so $\xi +y\in B_r^c\subset\omega^c$. This implies that
\begin{equation}\label{r60}
h(y+\xi)=0 \ {\mbox{ if }} |\xi|\ge 2r  {\mbox{ and }} |y|<r.
\end{equation}
Now, we observe that, using the change of variable $y=x-\xi$, $\Gamma$ can be written as
$$ \Gamma(\mu,\xi)=\frac{\mu^{-\gamma_s}}{q+1}\,\int_{\R^n}h(y+\xi)\,
z_0^{q+1}\left(\frac{y}{\mu}\right)\,dy.$$
Hence, using \eqref{r60} we have that, if $|\xi|\ge 2r$,
\begin{eqnarray*}
\Gamma(\mu,\xi) &=& \frac{\mu^{-\gamma_s}}{q+1}\, \int_{|y|\ge r}h(y+\xi)\,
z_0^{q+1}\left(\frac{y}{\mu}\right)\,dy \\
&\le & \frac{\mu^{-\gamma_s}}{
q+1}\,\max_{|y|\ge r}z_0^{q+1}\left(\frac{y}{\mu}\right)\,
\int_{|y|\ge r}\big|h(y+\xi)\big|\, dy.
\end{eqnarray*}
This implies that
\begin{equation}\label{r61}
|\Gamma(\mu,\xi)|\le \frac{\mu^{-\gamma_s}}{q+1}\,
\max_{|y|\ge r}z_0^{q+1}\left(\frac{y}{\mu}\right)\, \|h\|_{L^1(\R^n)}.
\end{equation}
Now, recalling \eqref{zetazero}, we obtain that
$$ z_0^{q+1}\left(\frac{y}{\mu}\right)=
\alpha_{n,s}^{q+1}\frac{\mu^{(n-2s)(q+1)}}{(\mu^2+|y|^2)^{\frac{(n-2s)(q+1)}{2}}},
$$
and so
$$ \max_{|y|\ge r}z_0^{q+1}\left(\frac{y}{\mu}\right)= \mu^{(n-2s)(q+1)}\,
\max_{|y|\ge r}\frac{\alpha_{n,s}^{q+1}}{(\mu^2+|y|^2)^{\frac{(n-2s)(q+1)}{2}}} \le C\, \mu^{(n-2s)(q+1)},$$
for a suitable constant $C>0$ independent on $\mu$.
Using this in \eqref{r61} and recalling \eqref{r00}, \eqref{h2} and
the fact that $h$ is continuous, we get
(up to renaming $C$)
$$ |\Gamma(\mu,\xi)|\le C \, \mu^{\frac{(n-2s)(q+1)}{2}},$$
which tends to zero as $\mu\rightarrow 0$. This concludes the proof in the case $|\xi|\ge 2r$.

If instead $|\xi|<2r$ then one has
\begin{equation}\begin{split}\label{s00}
\int_{\R^n}h(x)\,z_0^{q+1}\left(\frac{x-\xi}{\mu}\right)\,dx &= \,
\int_{|x|<r}h(x)\,z_0^{q+1}\left(\frac{x-\xi}{\mu}\right)\,dx \\
&\le \, \|h\|_{L^\infty(\R^n)}\, \int_{|x|<r}z_0^{q+1}\left(\frac{x-\xi}{\mu}\right)\,dx,
\end{split}\end{equation}
thanks to \eqref{supp}, \eqref{h2} and the fact that $h$ is continuous.

We claim that
\begin{equation}\label{est mu}
\int_{|x|<r}z_0^{q+1}\left(\frac{x-\xi}{\mu}\right)\,dx \le C\, \mu^{\min\{n,(n-2s)(q+1)\}},
\end{equation}
for some positive constant $C$ independent of $\mu$ (possibly depending on $r$).
To prove this, we recall \eqref{zetazero} and we get
\begin{eqnarray*}&&
\int_{|x|<r}z_0^{q+1}\left(\frac{x-\xi}{\mu}\right)\,dx \\&=& \alpha_{n,s}^{q+1}
\,\int_{|x|<r}\frac{dx}{\left(1+\frac{|x-\xi|^2}{\mu^2}\right)^{\frac{(n-2s)(q+1)}{2}}} \\
&\le & \alpha_{n,s}^{q+1}\,\left(\int_{|x-\xi|\le\mu}\,dx +
\int_{\mu<|x-\xi|<3r}\frac{\mu^{(n-2s)(q+1)}}{|x-\xi|^{(n-2s)(q+1)}}\,dx\right) \\
&\le & C\,\left(\mu^n +\mu^{(n-2s)(q+1)}\, \int_\mu^{3r}\rho^{n-1-(n-2s)(q+1)}\,d\rho \right) \\
&\le & C\,\left(\mu^n + \mu^{(n-2s)(q+1)}\,\mu^{-[(n-2s)(q+1)-n]_+}\right) \\
&\le & C\,\left(\mu^n + \mu^{\min\{n,(n-2s)(q+1)\}}\right) \\
&\le & C\,\mu^{\min\{n,(n-2s)(q+1)\}},
\end{eqnarray*}
up to changing $C$ from line to line, and this shows \eqref{est mu}.
Therefore, by \eqref{gamma}, \eqref{r00} and \eqref{s00} we have that
$$ |\Gamma(\mu,\xi)| \le C\, \mu^{-\frac{(n-2s)(q+1)}{2}}\,\mu^{\min\{n,(n-2s)(q+1)\}}.$$
Hence, if $(n-2s)(q+1)\le n$ we get that
$$ |\Gamma(\mu,\xi)| \le C\, \mu^{(n-2s)(q+1)}, $$
which implies that $\Gamma(\mu,\xi)$ tends to zero as $\mu\to0$.
If instead $n<(n-2s)(q+1)$ we obtain that
$$ |\Gamma(\mu,\xi)| \le C\, \mu^{n-\frac{(n-2s)(q+1)}{2}}.$$
In this case, we observe that, since $q\in(0,p)$ with $p=\frac{n+2s}{n-2s}$, then
$q+1<\frac{2n}{n-2s}$, and so
$$ n-\frac{(n-2s)(q+1)}{2}>n-\frac{n-2s}{2}\frac{2n}{n-2s}=0.$$
This implies that also in this case $\Gamma(\mu,\xi)$ tends to zero as $\mu\to0$.
This concludes the proof of Lemma \ref{lemma1}.
\end{proof}

Now we compute the limit of $\Gamma$ as $\mu+|\xi|$ tends to $+\infty$.

\begin{lemma}\label{lemma2}
Let $\Gamma$ be as in \eqref{gamma}. Then
$$ \lim_{\mu+|\xi|\rightarrow +\infty}\Gamma(\mu,\xi)=0.$$
\end{lemma}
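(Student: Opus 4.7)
The plan is to split the analysis into two regimes depending on the size of $\mu$, exploiting two complementary features of $z_0$: its uniform boundedness (which kills the prefactor $\mu^{-\gamma_s}$ when $\mu$ is large) and its algebraic decay at infinity (which handles the case of bounded $\mu$ and large $|\xi|$, since then the argument $(x-\xi)/\mu$ of $z_0$ is large). More precisely, I would argue that for every $\varepsilon>0$ one can find $M_1,M_2>0$ such that $|\Gamma(\mu,\xi)|<\varepsilon$ whenever $\mu>M_1$ or ($\mu\le M_1$ and $|\xi|>M_2$), which is enough since the condition $\mu+|\xi|\to+\infty$ forces at least one of these alternatives.

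For the first regime, I would use the trivial bound $z_0(y)\le \alpha_{n,s}$ coming from \eqref{zetazero} together with the compactness of $\omega$ given by \eqref{h2} to obtain
\begin{equation*}
|\Gamma(\mu,\xi)|\le \frac{\mu^{-\gamma_s}}{q+1}\,\|h\|_{L^{\infty}(\R^n)}\,\alpha_{n,s}^{q+1}\,|\omega|\le C\,\mu^{-\gamma_s},
\end{equation*}
with $C$ independent of $\xi$. Since $\gamma_s>0$, this uniform bound in $\xi$ gives $\Gamma(\mu,\xi)\to 0$ as $\mu\to+\infty$, which determines $M_1$.

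For the second regime, I would fix $M_1$ from above and assume $\mu\le M_1$ and $|\xi|\ge 2r$, where $r$ is chosen so that $\omega\subset B_r$ as in \eqref{supp}. Then for every $x\in \omega$ one has $|x-\xi|\ge |\xi|-r\ge |\xi|/2$, so from \eqref{zetazero} and recalling $2\gamma_s=(n-2s)(q+1)$,
\begin{equation*}
z_0^{q+1}\!\left(\frac{x-\xi}{\mu}\right)=\frac{\alpha_{n,s}^{q+1}\,\mu^{2\gamma_s}}{(\mu^2+|x-\xi|^2)^{\gamma_s}}\le \frac{C\,\mu^{2\gamma_s}}{|\xi|^{2\gamma_s}}.
\end{equation*}
Integrating this against $h$ (which is bounded and supported in $B_r$) and inserting the prefactor $\mu^{-\gamma_s}/(q+1)$ in \eqref{gamma} yields
\begin{equation*}
|\Gamma(\mu,\xi)|\le C\,\frac{\mu^{\gamma_s}}{|\xi|^{2\gamma_s}}\le \frac{C\,M_1^{\gamma_s}}{|\xi|^{2\gamma_s}},
\end{equation*}
which tends to $0$ as $|\xi|\to+\infty$ and thus determines $M_2$.

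I do not expect any serious obstacle here: the computation is essentially the same one already carried out in the proof of Lemma~\ref{lemma1} for the case $|\xi|\ge 2r$, and the only point requiring a bit of attention is to verify that the two regimes together exhaust all the ways in which $\mu+|\xi|$ can blow up. Note in particular that the small-$\mu$ case, where the factor $\mu^{-\gamma_s}$ would be dangerous, never arises in our argument: either $\mu$ itself is large (first regime), or $\mu$ is bounded above and below and only $|\xi|$ is large (second regime, where $\mu^{\gamma_s}$ is uniformly controlled).
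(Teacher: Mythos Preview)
Your proof is correct and follows essentially the same two-regime split as the paper: bound $z_0$ uniformly to handle large $\mu$, and use the algebraic decay of $z_0$ together with $|x-\xi|\ge|\xi|/2$ on $\omega$ to handle bounded $\mu$ and large $|\xi|$. Your version is in fact slightly more efficient than the paper's: the paper argues sequentially, extracting subsequences with $\mu\to\bar\mu\in[0,\infty]$ and then invoking Lemma~\ref{lemma1} separately for the case $\bar\mu=0$, whereas your uniform bound $\mu^{\gamma_s}\le M_1^{\gamma_s}$ handles all $\mu\in(0,M_1]$ at once without any appeal to Lemma~\ref{lemma1}.

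One small inaccuracy in your closing remark: in your second regime $\mu$ is only bounded \emph{above} by $M_1$, not ``above and below'' as you write. This does not matter for the argument, since the relevant factor $\mu^{\gamma_s}$ is increasing in $\mu$ and hence controlled by $M_1^{\gamma_s}$ regardless of how small $\mu$ is; but you should correct the sentence so that it matches what you actually use.
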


\begin{proof}
Suppose that $\mu\rightarrow +\infty$. Then recalling \eqref{h2},
the fact that $h$ is continuous and \eqref{zetazero} we have
$$ |\Gamma(\mu,\xi)|\le C\,\mu^{-\gamma_s}\,
\|h\|_{L^1(\R^n)}, $$
for some positive constant $C$ independent on $\mu$.
Therefore $\Gamma(\mu,\xi)$ tends to zero as $\mu\rightarrow +\infty$.

Now suppose that $\mu\rightarrow\bar{\mu}$ for some $\bar{\mu}\in[0,+\infty)$,
therefore $|\xi|\rightarrow +\infty$. If $\bar{\mu}=0$, then we can use
Lemma \ref{lemma1} and we get the desired result. Hence,
we can suppose that $\bar{\mu}\in(0,+\infty)$.
In this case, we make the change of variable $y=x-\xi$ and we write $\Gamma$ as
\begin{equation}\label{hgfhh}
\Gamma(\mu,\xi)= \frac{\mu^{-\gamma_s}}{q+1}\,\int_{\R^n}h(y+\xi)\,
z_0^{q+1}\left(\frac{y}{\mu}\right)\,dy.
\end{equation}
Since $h$ has compact support (recall \eqref{h2}), there exists $r>0$ such that
$\omega=supp\,h\subset B_r$ and so \eqref{hgfhh} becomes
\begin{equation}\label{conto2}
\Gamma(\mu,\xi)= \frac{\mu^{-\gamma_s}}{q+1}\,\int_{|y+\xi|\le r}h(y+\xi)\,
z_0^{q+1}\left(\frac{y}{\mu}\right)\,dy.
\end{equation}
We also notice that, since $|\xi|\rightarrow +\infty$, we can suppose that $|\xi|>2r$.
Therefore, if $y\in B_r(-\xi)$, then $|y+\xi|\le r<|\xi|/2$, which implies that
$$ |y|\ge |\xi|-|y+\xi|\ge |\xi|-\frac{|\xi|}{2}=\frac{|\xi|}{2}.$$
Hence, recalling \eqref{zetazero}, we obtain that if $y\in B_r(-\xi)$
\begin{eqnarray*}
z_0^{q+1}\left(\frac{y}{\mu}\right) &=&
\frac{\alpha_{n,s}^{q+1}\,\mu^{(n-2s)(q+1)}}{(\mu^2+|y|^2)^{\frac{(n-2s)(q+1)}{2}}}\\
&\le & \frac{\alpha_{n,s}^{q+1}\,\mu^{(n-2s)(q+1)}}{|y|^{(n-2s)(q+1)}}\\
&\le & \frac{2^{(n-2s)(q+1)}\,\alpha_{n,s}^{q+1}\,\mu^{(n-2s)(q+1)}}{|\xi|^{(n-2s)(q+1)}}.
\end{eqnarray*}
Using this, \eqref{h2} and the fact that $h$ is continuous
into \eqref{conto2}, we have that
$$ |\Gamma(\mu,\xi)| \le C\, \mu^{\gamma_s}\,\frac{1}{|\xi|^{(n-2s)(q+1)}}\,\|h\|_{L^1(\R^n)}, $$
for some constant independent on $\mu$ and $\xi$.
Since $\mu\rightarrow\bar{\mu}\in(0,+\infty)$, this implies that
$$ \Gamma(\mu,\xi)\rightarrow 0 \ {\mbox{ as }}|\xi|\rightarrow +\infty,$$
thus concluding the proof of Lemma \ref{lemma2}.
\end{proof}

Finally we show the following:

\begin{lemma}\label{lemma3}
Assume that $\frac{2s}{n-2s}<q<p$.
Let $\Gamma$ be as in \eqref{gamma}. Suppose that there exists $\xi_0\in\R^n$
such that
$h(\xi_0)>0$ ($h(\xi_0)<0$ respectively). Then
$$ \lim_{\mu\rightarrow 0}\frac{\Gamma(\mu,\xi_0)}{\mu^{n-\gamma_s}}=A, $$
for some $A>0$ ($A<0$, respectively).
\end{lemma}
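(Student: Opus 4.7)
The plan is to make a scaling change of variables that absorbs the singular factor $\mu^{-\gamma_s}$ into the integrand, and then apply the Dominated Convergence Theorem. Explicitly, starting from the definition in \eqref{gamma}, I would substitute $y=(x-\xi_0)/\mu$ (so $dx=\mu^n\,dy$) to rewrite
\[
\frac{\Gamma(\mu,\xi_0)}{\mu^{n-\gamma_s}}
=\frac{1}{q+1}\int_{\R^n}h(\xi_0+\mu y)\,z_0^{q+1}(y)\,dy.
\]
Since $h$ is continuous at $\xi_0$, the integrand converges pointwise as $\mu\to0$ to $h(\xi_0)\,z_0^{q+1}(y)$, and it is dominated by $\|h\|_{L^\infty(\R^n)}\,z_0^{q+1}(y)$.

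The key point is to verify that $z_0^{q+1}\in L^1(\R^n)$, and this is precisely where the hypothesis $q>\frac{2s}{n-2s}$ is used. Indeed, from \eqref{zetazero} and \eqref{r00},
\[
z_0^{q+1}(y)=\frac{\alpha_{n,s}^{q+1}}{(1+|y|^2)^{\gamma_s}},
\]
so $z_0^{q+1}$ is integrable at infinity if and only if $2\gamma_s>n$, i.e. $(n-2s)(q+1)>n$, which is equivalent to $q>\frac{2s}{n-2s}$. Under this assumption the dominating function is integrable, so the Dominated Convergence Theorem applies and yields
\[
\lim_{\mu\to 0}\frac{\Gamma(\mu,\xi_0)}{\mu^{n-\gamma_s}}
=\frac{h(\xi_0)}{q+1}\int_{\R^n}z_0^{q+1}(y)\,dy =: A.
\]

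The integral on the right is a strictly positive constant depending only on $n$, $s$ and $q$. Consequently, $A$ has the same sign as $h(\xi_0)$, giving $A>0$ when $h(\xi_0)>0$ and $A<0$ when $h(\xi_0)<0$, as claimed. I do not foresee a real obstacle here: the only delicate issue is the integrability of $z_0^{q+1}$, and the sharp condition $q>\frac{2s}{n-2s}$ is exactly what guarantees it, which clarifies why this hypothesis is imposed in the statement.
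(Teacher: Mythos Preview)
Your proof is correct and follows essentially the same approach as the paper: the same change of variables $y=(x-\xi_0)/\mu$, the same identification of $z_0^{q+1}\in L^1(\R^n)$ from the hypothesis $q>\frac{2s}{n-2s}$, and the same application of the Dominated Convergence Theorem to reach $A=\frac{h(\xi_0)}{q+1}\int_{\R^n}z_0^{q+1}(y)\,dy$. If anything, you give slightly more detail on the integrability check than the paper does.
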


\begin{proof}
We prove the lemma only in the case $h(\xi_0)>0$, since the other case is analogous.
We notice that, by using the change of variable $y=(x-\xi)/\mu$,
we can rewrite $\Gamma$ as
\begin{equation*}
\Gamma(\mu,\xi)=\frac{\mu^{n-\gamma_s}}{q+1}\,\int_{\R^n}h(\mu y+\xi)\,z_0^{q+1}(y)\,dy.
\end{equation*}
Then we obtain
\begin{equation}\label{gamma2gamma2}
\frac{\Gamma(\mu,\xi_0)}{\mu^{n-\gamma_s}} =\frac{1}{q+1}\,
\int_{\R^n} h(\mu y+\xi_0)\,z_0^{q+1}(y)\,dy. \end{equation}
Now, since~$\frac{2s}{n-2s}<q<p$, we have that $z_0$ defined in \eqref{zetazero} satisfies
\begin{equation}\label{m00}
z_0^{q+1}\in L^1(\R^n).
\end{equation}
We observe that
$$ h(\mu y+\xi_0)\,z_0^{q+1}(y)\rightarrow h(\xi_0)\,z_0^{q+1}(y) \ {\mbox{ as }}\mu\rightarrow 0.$$
Moreover, thanks to \eqref{h2}, the fact that $h$ is continuous
and \eqref{m00}, we have that
$$ h(\mu y+\xi_0)\,z_0^{q+1}(y)\le \|h\|_{L^\infty(\R^n)}\, z_0^{q+1}(y)\in L^1(\R^n),$$
and so from the Dominated Convergence Theorem, we get
$$ \frac{\Gamma(\mu,\xi_0)}{\mu^{n-\gamma_s}}\rightarrow \frac{h(\xi_0)}{q+1}\,
\int_{\R^n}z_0^{q+1}(y)\,dy \ {\mbox{ as }}\mu\rightarrow 0,$$
as desired.
Notice indeed that
$$ A:=\frac{h(\xi_0)}{q+1}\,\int_{\R^n}z_0^{q+1}(y)\,dy $$
is strictly positive and bounded.\end{proof}

We also need a variation of Lemma \ref{lemma3}
to deal with the case in which $0<q\le\frac{2s}{n-2s}$.
In this case, recalling the alternative in \eqref{ALT:1}--\eqref{ALT:2},
we take the additional assumption that $h\ge0$.

\begin{lemma}\label{lemma3BIS}
Assume that $0<q\le\frac{2s}{n-2s}$ and $h\ge0$.
Let $\Gamma$ be as in \eqref{gamma}. Suppose that there exists $\xi_0\in\R^n$ such that
$h(\xi_0)>0$. Then
$$ \lim_{\mu\rightarrow 0}\frac{\Gamma(\mu,\xi_0)}{\mu^{n-\gamma_s}}=+\infty.$$
\end{lemma}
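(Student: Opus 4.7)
The starting point is the identity \eqref{gamma2gamma2}, which after the change of variables $y=(x-\xi_0)/\mu$ reads
$$\frac{\Gamma(\mu,\xi_0)}{\mu^{n-\gamma_s}} = \frac{1}{q+1}\int_{\R^n} h(\mu y+\xi_0)\,z_0^{q+1}(y)\,dy.$$
The substantive difference with Lemma \ref{lemma3} is that the borderline condition $0<q\leq \tfrac{2s}{n-2s}$ is precisely the one that makes $z_0^{q+1}\notin L^1(\R^n)$. Indeed, from \eqref{zetazero} we have $z_0(y)\sim |y|^{-(n-2s)}$ as $|y|\to\infty$, hence $z_0^{q+1}(y)\sim |y|^{-(n-2s)(q+1)}$, and since $(n-2s)(q+1)\leq n$ the tail integral diverges. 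In particular,
$$I_R := \int_{B_R} z_0^{q+1}(y)\,dy \longrightarrow +\infty \quad\text{as } R\to+\infty.$$

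The plan is then to produce a pointwise lower bound for the integrand on a ball of arbitrarily large radius, using continuity of $h$ at $\xi_0$, and to discard the outside contribution thanks to the sign assumption $h\geq 0$. Concretely, since $h(\xi_0)>0$ and $h$ is continuous, for every $R>0$ there exists $\mu_R>0$ such that $h(\mu y+\xi_0)\geq h(\xi_0)/2$ whenever $|y|\leq R$ and $0<\mu<\mu_R$. Combining this with $h\geq 0$, we get for all $\mu\in(0,\mu_R)$
$$\int_{\R^n} h(\mu y+\xi_0)\,z_0^{q+1}(y)\,dy \;\geq\; \int_{B_R} h(\mu y+\xi_0)\,z_0^{q+1}(y)\,dy \;\geq\; \frac{h(\xi_0)}{2}\,I_R.$$
Hence $\liminf_{\mu\to 0}\Gamma(\mu,\xi_0)/\mu^{n-\gamma_s}\geq \tfrac{h(\xi_0)}{2(q+1)}\,I_R$, and letting $R\to+\infty$ yields the desired divergence to $+\infty$.

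The only nontrivial point is the verification that $I_R\to+\infty$, which reduces to the explicit asymptotic $z_0(y)^{q+1}\sim \alpha_{n,s}^{q+1}|y|^{-(n-2s)(q+1)}$ and the elementary fact that $\int_1^R \rho^{n-1-(n-2s)(q+1)}\,d\rho$ diverges as $R\to\infty$ exactly when $(n-2s)(q+1)\leq n$, i.e. under our standing hypothesis on $q$. This completes the argument; the main (and essentially the only) obstacle is that, in contrast to Lemma \ref{lemma3}, the Dominated Convergence Theorem is unavailable and must be replaced by the one-sided Fatou-type estimate enabled by $h\geq 0$.
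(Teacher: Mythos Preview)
Your proof is correct and follows essentially the same idea as the paper: both exploit the nonnegativity of $h$ to obtain a one-sided lower bound and the divergence of $\int_{\R^n} z_0^{q+1}\,dy$ when $(n-2s)(q+1)\le n$. The only difference is cosmetic---the paper invokes Fatou's Lemma directly to obtain $\liminf_{\mu\to0}\int_{\R^n} h(\mu y+\xi_0)\,z_0^{q+1}(y)\,dy \ge h(\xi_0)\int_{\R^n} z_0^{q+1}(y)\,dy=+\infty$, whereas you carry out the same estimate by hand via the truncation on $B_R$.
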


\begin{proof}
By \eqref{gamma2gamma2} and Fatou's Lemma,
$$ \liminf_{\mu\rightarrow 0}\int_{\R^n}h(\mu y+\xi_0)\,z_0^{q+1}(y)\,dy \ge
h(\xi_0)\,\int_{\R^n}z_0^{q+1}(y)\,dy=+\infty,$$
as desired.\end{proof}

\section{Proof of Theorems \ref{TH1} and \ref{TH1BIS}}\label{sec:proof}

Now we are ready to complete the proof of Theorems \ref{TH1} and \ref{TH1BIS}.
For this, we will use Lemma \ref{lemma3}
if alternative \eqref{ALT:1} holds true and 
Lemma \ref{lemma3BIS}
if alternative \eqref{ALT:2} holds true.
So we let $A$ to be as in Lemma \ref{lemma3} in the first case and $A:=+\infty$
in the second case. In this way,
thanks to \eqref{h3} and either Lemma \ref{lemma3} (if \eqref{ALT:1} is satisfied)
or Lemma \ref{lemma3BIS} (if \eqref{ALT:2} is satisfied), we see that
there exist $\mu_0>0$ as small as we want and $\xi_0\in\R^n$ such that
\begin{equation}\label{cond0}
\Gamma(\mu_0,\xi_0)\ge \frac{\mu_0^{n-\gamma_s}}{2}\,\min\{A,1\}=:B.
\end{equation}
Now, we use Lemma~\ref{lemma1} to say that if $\mu>0$ is sufficiently small, then
$$ \Gamma(\mu,\xi)<\frac{B}{2}  {\mbox{ for any }} \xi\in\R^n.$$
In particular, if~$\mu_1:=\mu_0/2$, then $\mu_1$ is small if so is $\mu_0$
and therefore we can write
\begin{equation}\label{cond1}
\Gamma(\mu_1,\xi)<\frac{B}{2} {\mbox{ for any }} \xi\in\R^n.
\end{equation}
Moreover, from Lemma~ \ref{lemma2} we deduce that there exists $R_*>0$
such that if $\mu+|\xi|>R_*$ we have
$$ \Gamma(\mu,\xi)<\frac{B}{2}.$$
In particular, we can take $\mu_2=R_2=R_*+\mu_0+|\xi_0|+1$ and we have that
\begin{equation}\label{cond2}
\Gamma(\mu,\xi)<\frac{B}{2} {\mbox{ if either }}\mu =\mu_2 {\mbox{ and }}|\xi|\le R_2
{\mbox{ or }}\mu\le\mu_2 {\mbox{ and }}|\xi|= R_2.
\end{equation}

Now we perform our choice of $R$, $\mu_1$ and $\mu_2$ in \eqref{critman}:
we take $\mu_1$ and $\mu_2$
such that \eqref{cond1} and \eqref{cond2} are satisfied,
and $R=R_2$.

Also, we set
$$ S:=\{\mu_1\le\mu\le\mu_2 {\mbox{ and }} |\xi|\le R\},$$
and we notice that $\Gamma$ admits a maximum in $S$, since $\Gamma$
is continuous and $S$ is a compact set. Moreover, thanks to~\eqref{cond1}
and~\eqref{cond2} we have that
\begin{equation}\label{cond4}
\Gamma(\mu,\xi)<\frac{B}{2} {\mbox{ if }}(\mu,\xi)\in\partial S.
\end{equation}
On the other hand,
$$ |\xi_0|<R_2 {\mbox{ and }} \mu_1<\mu_0<\mu_2, $$
which implies that $(\mu_0,\xi_0)\in S$.
Therefore, \eqref{cond0} and \eqref{cond4} imply that
the maximum of $\Gamma$ is achieved
at some point $(\mu_*,\xi_*)$ in the interior of $S$.

Now, we go back to the functional $G$,
and recalling \eqref{gamma} we obtain that $G$ admits
a maximum~$z_{\mu_*,\xi_*}$ in the critical manifold $Z_0$ defined in~\eqref{critman}.
Hence, we can apply Proposition~\ref{prop1} and we obtain the existence
of a critical point of $f_\epsilon$, that is a solution to~\eqref{problem},
given by
$$ u_{1,\epsilon}:= z_{\mu_*,\xi_*}+ w(\epsilon,z_{\mu_*,\xi_*}). $$
Also, $u_{1,\epsilon}$ is positive thanks to \eqref{est}.
This completes the proof of Theorem \ref{TH1}.

So we now focus on the proof of Theorem \ref{TH1BIS}. Notice that
in this case we are assuming that $\frac{2s}{n-2s}<q<p$
and so we are in the position of using Lemma \ref{lemma3}.
More precisely,
since $h$ changes sign, there exists~$\tilde{\xi}_0\in\R^n$
such that~$h(\tilde{\xi}_0)<0$, and so we can use Lemma~\ref{lemma3} to say that
$$ \Gamma(\tilde{\mu}_0,\tilde{\xi}_0)\le \frac{\tilde{\mu}_0^{n-\gamma_s}}{2}\,
\max\{A,-1\}, $$
for some $\tilde{\mu}_0>0$. Then we can
repeat all the above arguments (with suitable modifications)
to find a local minimum of~$\Gamma$, and so a a local minimum of~$G$.
Then, again from Proposition~\ref{prop1}
we obtain the existence of a second positive solution.
This concludes the proof of Theorem \ref{TH1BIS}.

\vfill


\begin{thebibliography}{1}
\setlinespacing{0.98}
\frenchspacing

\bibitem{AGP} {\sc A. Ambrosetti, J. Garc\'ia Azorero, I. Peral}:
Perturbation of $\Delta u+u^{(N+2)/(N-2)}=0$, the Scalar Curvature Problem in $\mathbb{R}^N$,
and Related Topics.
{\it J. Funct. Anal.} {\bf 165} (1998), 112--149.

\bibitem{AGP2} {\sc A. Ambrosetti, J. Garc\'ia Azorero, I. Peral}:
Elliptic variational problems in $\R^n$ with critical growth.
{\it J. Differential Equations} {\bf 168} (2000), 10--32.

\bibitem{ALM}
{\sc A. Ambrosetti, Y.Y Li, A. Malchiodi}:
On the Yamabe problem and the scalar curvature problems under boundary conditions.
{\it Math. Ann.} {\bf 322} (2002), no. 4, 667--699.

\bibitem{AmbMal} {\sc A. Ambrosetti, A. Malchiodi}:
A multiplicity result for the Yamabe problem on~$S^n$.
{\it J. Funct. Anal.} {\bf 168} (1999), no. 2, 529--561.

\bibitem{AmbMal2} {\sc A. Ambrosetti, A. Malchiodi}:
On the symmetric scalar curvature problem on~$S^n$.
{\it J. Differential Equations} {\bf 170} (2001), no. 1, 228--245.

\bibitem{AM} {\sc A. Ambrosetti, A. Malchiodi}:
{\it Perturbation methods and semilinear elliptic problems on $\R^n$}.
Progress in Mathematics, 240. Birkh\"auser Verlag, Basel, 2006.

\bibitem{AP} {\sc A. Ambrosetti, G. Prodi}:
{\it A Primer of Nonlinear Analysis}. Cambridge University Press, 1993.

\bibitem{BM} {\sc M. Berti, A. Malchiodi}:
Non-compactness and multiplicity results for the Yamabe problem on~$S^n$.
{\it J. Funct. Anal.} {\bf 180} (2001), no. 1, 210--241.

\bibitem{B} {\sc H. Brezis}:
{\it Functional Analysis, Sobolev Spaces and Partial Differential Equations}. Springer, 2011.


\bibitem{CaW} {\sc F. Catrina, Z.-Q. Wang}:
Symmetric solutions for the prescribed scalar curvature problem.
{\it Indiana Univ. Math. J.} {\bf 49} (2000), no. 2, 779--813.

\bibitem{CZ} {\sc G. Chen, Y. Zheng}:
Concentration phenomenon for fractional nonlinear Schr\"odinger equations.
{\it Commun. Pure Appl. Anal.} {\bf 13} (2014), no. 6, 2359--2376.

\bibitem{CZ2} {\sc G. Chen, Y. Zheng}: A perturbation result for
the~$Q_\gamma$ curvature problem on~$S^n$.
{\it Nonlinear Anal.} {\bf 97} (2014), 4--14.

\bibitem{CKL} {\sc W. Choi, S. Kim, K.-A. Lee}:
Asymptotic behavior of solutions for nonlinear elliptic problems with the fractional Laplacian.
{\it J. Funct. Anal.} {\bf 266} (2014), no. 11, 6531--6598.

\bibitem{Ci} {\sc S. Cingolani}:
Positive solutions to perturbed elliptic problems in~$\R^N$ involving critical Sobolev exponent.
{\it Nonlinear Anal.} {\bf 48} (2002), no. 8,,1165--1178.

\bibitem{Da} {\sc E. Dancer}:
New solutions of equations on~$\R^n$.
{\it Ann. Scuola Norm. Sup. Pisa Cl. Sci. (4)} {\bf 30} (2001), no. 3--4, 535--563.

\bibitem{DDDV} {\sc J. D\'avila, M. Del Pino, S. Dipierro, E. Valdinoci}:
Concentration phenomena for the nonlocal Schr\"odinger equation with Dirichlet
datum. {\it Anal. PDE} {\bf 8} (2015), no. 5, 1165--1235.

\bibitem{DDS} {\sc J. D\'avila, M. Del Pino, Y. Sire}:
Nondegeneracy of the bubble in the critical case for nonlocal equations.
{\it Proc. Amer. Math. Soc.} {\bf 141} (2013), no. 11, 3865--3870.

\bibitem{DDW} {\sc J. D\'avila, M. Del Pino, J. Wei}:
Concentrating standing waves for the fractional nonlinear Schr\"odinger equation.
{\it J. Differential Equations} {\bf 256} (2014), no. 2, 858--892.

\bibitem{two-bubble} {\sc M. del Pino, P. Felmer, M. Musso}: 
Two-bubble solutions in the super-critical Bahri-Coron's problem, {\it Cal. Var 
Partial Differential Equations} {\bf 16} (2003), no. 2, 113--145.

\bibitem{erratum} {\sc M. del Pino, P. Felmer, M. Musso}:
Erratum: ``Two-bubble solutions in the super-critical Bahri-Coron's problem'' 
[Calc. Var. Partial Differential Equations 16 (2003), no. 2, 113–145; MR1956850].
{\it Calc. Var. Partial Differential Equations} {\bf 20} (2004), no. 2, 231--233.

\bibitem{DPV} {\sc E. Di Nezza, G. Palatucci, E. Valdinoci}:
Hitchhiker's guide to the fractional Sobolev spaces.
{\it Bull. Sci. Math.} {\bf 136} (2012), no. 5, 521--573.

\bibitem{DFV} {\sc S. Dipierro, A. Figalli, E. Valdinoci}:
Strongly nonlocal dislocation dynamics in crystals.
{\it Comm. Partial Differential Equations} {\bf 39} (2014), no. 12, 2351--2387.

\bibitem{FV} {\sc M. M. Fall, E. Valdinoci}:
Uniqueness and nondegeneracy of positive solutions of
$(-\Delta)^s u+u=u^p$ in $\R^N$ when~$s$ is close to 1.
{\it Comm. Math. Phys.} {\bf 329} (2014), no. 1, 383--404.

\bibitem{frank} {\sc R. L. Frank}:
On the uniqueness of ground states of non-local equations.
{\it Journ. \`Equ. D\`eriv. Partielles, St.-Jean-de-Monts} (2011), no. 5, 10 p. 

\bibitem{FL} {\sc R. L.  Frank, E. Lenzmann}:
Uniqueness of non-linear ground states for fractional Laplacians in~$\R$.
{\it Acta Math.} {\bf 210} (2013), no. 2, 261--318.

\bibitem{FLS} {\sc R. L. Frank, E. Lenzmann, L. Silvestre}:
Uniqueness of radial solutions for the fractional Laplacian. {\it Comm. Pure Appl. Math.}, to appear, 
http://arxiv.org/abs/1302.2652

\bibitem{GMP} {\sc J. Garc\'ia Azorero, E. Montefusco, I. Peral}:
Bifurcation for the p-laplacian in $\mathbb{R}^N$.
{\it Adv. Differential Equations} {\bf 5} (2000), no. 4--6, 435--464.

\bibitem{K} {\sc T. Kato}: {\it Perturbation Theory for Linear Operators}.
Second edition, Springer-Verlag, Berlin, 1976.

\bibitem{LAS} {\sc N. Laskin}:
Fractional quantum mechanics and 
{L}\'evy path integrals. {\it Phys. Lett. A} {\bf 268}
(2000), no. 4-6, 298--305.

\bibitem{Lieb} \textsc{E. H. Lieb}, 
Sharp constants in the Hardy-Littlewood-Sobolev and related inequalities,
{\it Ann. of Math. (2)} \textbf{118} (1983), no. 2, 349-374.

\bibitem{Mal} {\sc A. Malchiodi}:
Multiple positive solutions of some elliptic equations in~$\R^n$.
{\it Nonlinear Anal.} {\bf 43} (2001), no. 2, 159--172.

\bibitem{Mal2} {\sc A. Malchiodi}: The scalar curvature problem on~$S^n$:
an approach via Morse theory. {\it Calc. Var. Partial Differential Equations}
{\bf 14} (2002), no. 4, 429--445.

\bibitem{S} {\sc M. Schechter}:
Basic Theory of Fredholm Operators.
{\it Ann. Scuola Norm. Sup. Pisa (3)}  {\bf 21} (1967), no. 2, 261--280.

\bibitem{SEC}
{\sc S. Secchi, M. Squassina}:
Soliton dynamics for fractional Schr\"odinger equations.
{\it Appl. Anal.} {\bf 93} (2014), no. 8, 1702--1729.

\bibitem{SV} {\sc R. Servadei, E. Valdinoci}:
Weak and viscosity solutions of the fractional Laplace equation. {\it Publ. Mat.}
{\bf 58} (2014), no. 1, 133--154.

\bibitem{stein:sing.int} {\sc E.M. Stein}: {\it Singular integrals and
differentiability properties of
functions.} Princeton Mathematical Series, No. 30 Princeton University
Press, Princeton, N.J. 1970.

\bibitem{Talenti} \textsc{G. Talenti}, 
Best constant in Sobolev inequality,
{\it Ann. Mat. Pura Appl.}  \textbf{110 } (1976), no. 4, 353-372.
\end{thebibliography}
\end{document}